\documentclass{amsart}

\usepackage{graphicx, amssymb, diagram}

\theoremstyle{plain}
\newtheorem{theorem}{Theorem}[section]
\newtheorem{lemma}[theorem]{Lemma}
\newtheorem{corollary}[theorem]{Corollary}
\newtheorem*{intro_theorem}{Theorem}

\theoremstyle{definition}
\newtheorem{definition}[theorem]{Definition}

\theoremstyle{remark}
\newtheorem{remark}[theorem]{Remark}

\theoremstyle{example}

\begin{document}

\title{Duality for toric Landau-Ginzburg models}

\author{Patrick Clarke}
\address{Department of Mathematics, University of Pennsylvania,
Philadelphia, Pennsylvania 19104}
\curraddr{Department of Mathematics,
University of Pennsylvania, Philadelphia, Pennsylvania 19104}
\email{pclarke@math.upenn.edu}
\thanks{The author was supported in part by NSF Award \#0703643.}

\subjclass[2000]{Primary 14J32; Secondary 14M10, 14M25, 14J81}

\keywords{Landau-Ginzburg, mirror symmetry,
complete intersection, toric variety}

\begin{abstract}
We introduce a duality construction for  toric
Landau-Ginzburg models, applicable to complete
intersections in toric varieties via the sigma model / Landau-Ginzburg
model correspondence.  This
construction is shown to reconstruct  those
of
Batyrev-Borisov, Berglund-H\"ubsch,
Givental, and Hori-Vafa. It
can be done in more general situations, and
provides partial resolutions when the above
constructions give a singular mirror. 
An extended  example is given:  the
Landau-Ginzburg models dual to
elliptic
curves
in $(\Bbb{P}^1)^2$.
\end{abstract}

\maketitle

\section*{introduction}

Motivated by the mirror involution on $N = 2$ superconformal theories 
and the fact that one can define from a Calabi-Yau such a theory,
string theorists believe that Calabi-Yaus exist in ``mirror pairs''.
In other words, to any Calabi-Yau, $Z$, there exists
another, $\check{Z}$, such that the superconformal theories 
they define are mirror to each other.

Mathematically, such a pair would have mirror symmetric Hodge diamonds,
\begin{equation}
h^{p,q}(Z) = h^{n-p, q}(\check{Z}) ,
\end{equation}
it would be possible to compute the Gromov-Witten invariants of $Z$ 
from the periods of $\check{Z}$, and, following the homological
mirror symmetry conjecture of Kontsevich \cite{kontsevich-icm}, the derived
category of coherent sheaves on $Z$, $D^b( coh \  Z)$, would be 
equivalent to the derived Fukaya category of $\check{Z}$, $DFuk(\check{Z})$.
Broadly speaking, mirror symmetry transforms
invariants of the symplectic topology of $Z$ into invariants of the
complex structure of $\check{Z}$, and vice versa.

For general K\"ahler manifolds $Z$ it is possible to define
Hodge numbers and $D^b( coh \ Z )$, and often (e.g. if $Z$ is Fano) 
Gromov-Witten invariants and $DFuk(Z)$ make sense.  This leads
naturally to the question of mirror pairs for general K\"ahler
manifolds, rather than just Calabi-Yaus.  One immediately
obvious hurdle is the existence of a K\"ahler manifold,
$\check{Z}$, satisfying the mirror symmetric Hodge
diamond relation above is only possible in the Calabi-Yau
case since 
\begin{equation}
h^{n, 0}(Z) = h^{0,0}(\check{Z}) = 1.
\end{equation}

A promising solution to problems such as this  is the use of Landau-Ginzburg 
models as mirrors.  A {\it Landau-Ginzburg model} is a superconformal theory
defined by  
a K\"ahler manifold 
$\check{X}$ equipped with a holomorphic function $\check{W} \colon \check{X} 
\rightarrow \Bbb{C}$.  
The function $\check{W}$ is referred to as the {\it superpotential}.
The Hodge numbers are then replaced by dimensions of graded 
components of a certain ``chiral'' ring associated with $Z$ or the pair 
$(\check{X},\check{W})$.
There also exists a version of the derived category of coherent sheaves
for Landau-Ginzburg models, $DB(\check{X}, \check{W})$, introduced by Orlov \cite{orlov-singDLG}
(generalizing the category of matrix factorizations), 
and a version of the derived Fukaya category, $DFS(\check{X}, \check{W})$, 
due to Seidel \cite{seidel-vanMut}.  

There are four general predictive methods 
for computing the mirror of a complete intersection in a toric variety.
The first to appear in the literature 
is that of Berglund and H\"ubsch 
\cite{BerglundHubsch:genConstruct}.  Their construction
produces a mirror candidate to a Calabi-Yau hypersurface in weighted projective
space.  Their mirror is also a Calabi-Yau hypersuface in (a different)
weighted projective space.

Shortly after this, Batyrev \cite{dualBatyrev} gave a construction for Calabi-Yau
hypersurfaces in Gorenstein toric Fano varieties.   This was subsequently
generalized by Borisov \cite{dualNefBorisov} to Calabi-Yau complete intersections that
arise from ``nef-partitions'' of the anti-canonical divisor.  Since
a weighted projective space is never Gorenstein unless it is projective
space itself, Berglund-H\"ubsch and Batyrev-Borisov address distinct
situations.

The combined efforts of Batyrev \cite{alg-geom/9711008}, Batyrev-Borisov 
\cite{alg-geom/9509009}, and 
Kontsevich \cite{kontsevich-motivicInt} ultimately led to the proof that $n$-dimensional 
Batyrev-Borsiov pairs, $Z$ and $\check{Z}$, have mirror symmetric 
(stringy) Hodge diamonds.  Thus giving one of the first general and rigorous 
results in mirror symmetry.

The same year, Givental published his ``mirror theorem'' \cite{mirrorTheoremGivental}
that made rigorous and generalized the approach to computing Gromov-Witten
invariants pioneered by physicists in \cite{CandelasDeLaOssaGreenParkes}.  Given a Fano, 
complete intersection $Z$ in a smooth projective toric variety, Givental 
constructs a Landau-Ginzburg model mirror candidate $(\check{X}, \check{W})$. 
He then shows that structure constants of the quantum cohomology of $Z$ can 
be found by considering certain
integrals over cycles in $\check{X}$ related to the Morse theory of $\check{W}$.
It is worth mentioning that the recipe given by Givental for the mirror
Landau-Ginzburg model can be done for arbitrary complete intersections
in toric varieties, even though he only considers the case of Fano
manifolds.

The most recent algorithm to compute a mirror candidate is given by 
Hori and Vafa \cite{HoriVafa}.  Using physical arguments, from a complete intersection in 
a smooth toric variety they obtain a mirror Landau-Ginzburg model.

This paper puts forth a new method for computing mirror candidates
for complete intersections in toric varieties.  Given
an $n$-dimensional 
toric variety $X$, an element $\omega \in$ the Chow group 
$A_{n-1}(X)_{\Bbb{C}/\Bbb{Z}}$ (coefficients in $\Bbb{C}/\Bbb{Z}$),
and a morphism $W \colon X \rightarrow \Bbb{C}$, we produce a
$n$-dimensional 
toric variety $X'$, a Chow group element $\omega' \in 
A_{n-1}(X')_{\Bbb{C}/\Bbb{Z}}$,
and a morphism $W' \colon  X' \rightarrow \Bbb{C}$
(strictly speaking the most natural objects to consider are 
toric Deligne-Mumford stacks, but we will not need this and the 
generalization is obvious).
We call the new Landau-Ginzburg model {\it dual} to the original.

Using an idea from physics called the {\it sigma model / Landau-Ginzburg 
model correspondence}, this process can be applied to generate a 
mirror candidate for a complete intersection in a toric variety.
This correspondence goes as follows.  Assume $Z$ is the zero locus
of a global section $\mathrm{w} \in \Gamma(Y, \mathcal{V})$ of some vector bundle
$\mathcal{V}$ over a K\"ahler manifold $Y$.  The identification
$\mathrm{Hom}(\mathcal{V}^\vee, \mathcal{O}_Y) \cong \Gamma(Y, \mathcal{V})$
allows one to use $\mathrm{w}$ to define a morphism $W \colon X \rightarrow \Bbb{C}$
on the total space $X = \mathrm{tot}(\mathcal{V}^\vee)$.
Physically, the superconformal theories defined by $Z$ and $(X,W)$
are the same \cite{SharpeGuffin:A-twistLG}.  Based on this, one would expect that
the Hodge numbers of $Z$ give the graded component dimensions
of the chiral ring of $(X,W)$, etc.
Landau-Ginzburg model mirror candidates are then formed by the composition:
\begin{equation}
Z \stackrel{\Sigma/LG}{\longmapsto} (X,W) \stackrel{dual}{\longmapsto} 
(X', W') .
\end{equation}
If $(X', W')$ 
has the form of a vector bundle paired with a section
of its dual, we can run the correspondence backwards,
 $(X', W') \stackrel{LG/\Sigma}{\longmapsto} 
Z'$, to obtain a K\"ahler manifold mirror candidate.

After initial definitions and describing the construction of
the dual Landau-Ginzburg model, we compare the mirror candidate 
obtained using four methods above with the candidate given by 
the dual.  Ultimately, all methods are shown to be
special cases of the duality defined here.

The Givental and Hori-Vafa mirrors are considered together.
By a simple computation, we show they both produce the same mirror.
The starting point for their construction is a split 
vector bundle $\mathcal{V}$ over toric variety $Y$.
The complete intersection is given by a global section of $\mathcal{V}$. 
Note that in their formulation, the complex structure of $Z$
is ignored.  
Concretely, this means that $Z$ is the zero locus
of some global section  $\mathrm{w} \in \Gamma(Y,\mathcal{V})$,
but the choice of $\mathrm{w}$ is not important.
This is justified by the fact that 
whenever integral, smooth subvarieties $Z_1$ and $Z_2$ are given by 
global sections $\mathrm{w}_1$
and $\mathrm{w}_2$ respectively, they are 
symplectomorphic.  With this in mind, we prove the following theorem.

\begin{intro_theorem}
For a specific choice $\mathrm{w}_{GHV} \in \Gamma(Y,\mathcal{V})$,
the mirror Landau-Ginzburg model of Givental-Hori-Vafa is
the dual Landau-Ginzburg model, $(X', W')$, 
to $(X = \mathrm{Tot}(\mathcal{V}^\vee), W_{GHV})$.
Where $W_{GHV}$ is defined by $\mathrm{w}_{GHV}$.
\end{intro_theorem}

It is nice to note that
the dual to $(X', W')$ is $(X,W^-)$,
where $X$ is the original space and $W^-$
is closely related to the original superpotential $W$.
This gives a nice resolution to the apparent lack 
of symmetry in the generalization of mirror symmetry to 
non-Calabi-Yaus bemoaned by Givental in \cite{phasesWitten}.  This
is explained in remark \ref{remark:double_dual}.

The methods of Berglund-H\"ubsch, and Batyrev-Borisov
produce mirror families.  This is to say that starting
from a family of Calabi-Yaus $(Z_t)_t$ they give
a new family $(\check{Z}_s)_s$ without 
specifying which $\check{Z}_s$ is mirror to a
given $Z_t$.  This suffices for the comparison
of Hodge numbers since these are constant in families.

The Berglund and H\"ubsch construction is easily treated
and a simple calculation gives the following theorem.

\begin{intro_theorem}
Let $(X,W)$ be obtained from the sigma model / Landau-Ginzburg model
correspondence applied to a Calabi-Yau hypersurface $Z$ in weighted
projective space. 
Then the dual Landau-Ginzburg model equals the
Landau-Ginzburg model corresponding to a member $\check{Z}$
of the Berglund-H\"ubsch mirror family.
\end{intro_theorem}

The last method we analyze is that of Batyrev and Borisov.
The starting point for their construction is a split bundle 
obtained from a nef-partition over
a Gorenstein toric Fano variety.
After some technical results concerning rational convex polyhedral
subsets, we arrive at the following theorem.

\begin{intro_theorem}
Let $(X,W)$ be obtained from the sigma model / Landau-Ginzburg model
correspondence applied to a certain  Calabi-Yau $Z_{BB}$ given by 
a global section of a split bundle defined by a nef-partition over
a  toric Fano variety.
Then the dual Landau-Ginzburg model equals the Landau-Ginzburg
model corresponding to a member of the Batyrev-Borisov
mirror family.
\end{intro_theorem}

One nice aspect about the dual Landau-Ginzburg
model is that varying the original superpotential, $W$, causes the symplectic
form $\omega' \in A_{n-1}(X')_{\Bbb{C}/\Bbb{Z}}$ to vary.  When
$X$ is obtained via the sigma model / Landau-Ginzburg correspondence,
varying the superpotential is the same as varying the complex structure
of $Z$.  This identification of complex moduli with symplectic
moduli is expected between mirror pairs.  
 
This can be used to avoid potential difficulties that arise
when the mirror candidate is singular.
For instance, every element of the Batryev-Borisov mirror
family may be singular since the ambient toric variety
may be singular.  However, varying $Z$ away from $Z_{BB}$
leads to a partial resolution of the mirror, 
thus taking some of the arbitrariness out of the choice 
of resolution.

There is a small example in section \ref{section:structure},
and we conclude the paper in with an extended example that 
makes up section \ref{examples}.  Here, the case 
of elliptic curves in $(\Bbb{P}^1)^2$ is treated from the point
of view of Givental and Batyrev-Borisov, and it is shown how
our point of view  allows for symplectic 
a resolution of these singular 
mirrors.

\smallskip
\smallskip

{\bf Acknowledgements.} This work would not have been possible without the support and 
insight of Ludmil Katzarkov.  We would also like thank Elizabeth Gasparim for her close 
reading of the text, and David Cox for pointing out several references.  
Thanks to Eric Sharpe for showing me the Berglund-H\"ubsch construction 
and explaining several ideas from physics to me.  Finally, thanks to Lev Borisov 
for pointing out a way to modify the 
construction given here.  This modification is discussed at the end of the paper.

\section{Definitions}
\label{definitions}

\subsection*{Rational convex polyhedral sets}

The theory of quasi-projective toric varieties is essentially
the same as the theory of rational convex polyhedral sets.
We will use this fact extensively throughout the paper.
Here we review  the main points we need.

Given a finite rank free abelian group $M$, we write $M_G$ 
for the tensor product $M \otimes_\Bbb{Z} G$ with an abelian group $G$.
Denote the dual, $\mathrm{Hom}_\Bbb{Z}(M, \Bbb{Z})$, by $N$.

A rational convex polyhedral set $P \subseteq M_\Bbb{R}$
is the set of solutions to a set of linear inequalities:  
\begin{equation}\{ \xi \in M_\Bbb{R} \ | 
\ \nu_j(\xi) +\alpha_j \geq 0, \ j = 1, \cdots,\
 r \}, \end{equation}
where  $\nu_j \in \{ \nu \otimes 1 \colon  M_\Bbb{R} \rightarrow
\Bbb{R} \ | \ \nu \in N \}$
and $\alpha_j \in \Bbb{R}$.

The inequalities can be packaged together into a homomorphism
$A = (\nu_1, \cdots, \nu_r) \colon M \rightarrow \Bbb{Z}^r$,
and an element $\alpha = (\alpha_1, \cdots, \alpha_r) \in \Bbb{R}^r = (\Bbb{Z}^r)_\Bbb{R}$.
With these we have $P = \{ \xi \in M_\Bbb{R} \ | \ A(\xi) + \alpha \geq 0 \}$.

A face of $P$ is either the 
intersection of $P$
with the boundary of an affine half-space containing $P$, 
or $P$ itself.  The dimension of a face is the dimension
of the real vector space given by the span of the elements
of the face.  A facet of $P$ is a face whose dimension is
one less that the dimension of $P$.

If $C$ is an arbitrary subset of $N_\Bbb{R}$, the dual
of  $C$ is the set
\begin{equation}\check{C} := \{ \xi \in M_\Bbb{R} \ |
\ \nu(\xi) \geq 0, \  \forall \ \nu \in C \}.\end{equation}

Defined similarly to the dual, is the polar of $C \subseteq N_\Bbb{R}$
\begin{equation}C^\circ := \{ \xi \in M_\Bbb{R} \ |
\ \nu(\xi) + 1 \geq 0, \ \forall \ \nu \in C \}.\end{equation}

Associated to a convex rational polyhedral set with non-empty 
interior is an {\it inward normal fan}.  This is made up 
of rational convex polyhedral cones.
A rational convex polyhedral cone is rational convex
polyhedral set closed under multiplication by $\Bbb{R}_{\geq 0}$,
and it is called strongly convex if $0$ is a vertex 
(i.e. $\{ 0 \}$ is a $0$ dimensional face).

A fan, $\Sigma$, is a non-empty finite collection of strongly convex rational polyhedral
cones $\subset N_\Bbb{R}$ such that
\begin{enumerate}
\item if $\sigma \in \Sigma$ then all faces of $\sigma$ are in $\Sigma$, and 
\item the intersection of any two cones in $\Sigma$ is also in $\Sigma$.
\end{enumerate}

The inward normal fan $\Sigma_P$ to $P$ is defined to be
the collection of cones
\begin{equation}
\sigma_f
:= \{ \nu \in N_\Bbb{R} \ | \ \nu(\xi)  =  \mathrm{min}(\nu|_{P}),
\ \forall \ \xi \in f \}
\end{equation}
for  each face $f$ of $P$.

\begin{lemma}(see for instance \cite{toricFulton})
If $P$ has non-empty interior, then $\Sigma_P$ is a fan.
\end{lemma}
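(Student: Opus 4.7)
The plan is to verify directly the three requirements: (i) each cone $\sigma_f$ is a strongly convex rational polyhedral cone in $N_\Bbb{R}$, (ii) $\Sigma_P$ is closed under taking faces, and (iii) $\Sigma_P$ is closed under intersections. Finiteness is immediate, since $P$ is cut out by finitely many inequalities and therefore has only finitely many faces.

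For (i), I would begin from the inequality presentation $A(\xi)+\alpha\geq 0$ to give a combinatorial description of faces: each face $f$ is cut out by setting some subset $I_f\subseteq\{1,\ldots,r\}$ of the defining inequalities to equality, namely the indices of the facets of $P$ that contain $f$. A standard linear-programming / Farkas-lemma argument then shows that a functional $\nu\in N_\Bbb{R}$ attains its minimum over $P$ at every point of $f$ if and only if $\nu$ is a nonnegative $\Bbb{R}$-combination of $\{\nu_j: j\in I_f\}$. Hence $\sigma_f=\mathrm{cone}(\nu_j:j\in I_f)$, which is manifestly a rational polyhedral cone. For strong convexity, if both $\nu$ and $-\nu$ lie in $\sigma_f$, then $\nu$ is constant on $f$ with value $\min(\nu|_P)=-\min(-\nu|_P)=\max(\nu|_P)$, so $\nu$ is in fact constant on all of $P$; the hypothesis that $P$ has non-empty interior then forces $\nu=0$. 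This is the only step in which the hypothesis is used.

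For the fan axioms, I would exploit the order-reversing correspondence $f_1\subseteq f_2 \Longleftrightarrow I_{f_1}\supseteq I_{f_2} \Longleftrightarrow \sigma_{f_2}\subseteq \sigma_{f_1}$. A face of $\sigma_f$ is the zero locus on $\sigma_f$ of some $\xi\in\check\sigma_f$; picking $\xi$ in the relative interior of the corresponding face of $\check\sigma_f$ lets me identify this face of $\sigma_f$ as $\sigma_g$, where $g$ is the unique face of $P$ containing $f$ on which $\xi$ realizes $\min(\xi|_P)$. For intersections, $\sigma_f\cap\sigma_g=\sigma_h$ where $h$ is the smallest face of $P$ containing $f\cup g$ (which exists because $P$ itself is such a face and because intersections of faces are faces): indeed, a functional minimizes on both $f$ and $g$ if and only if its set of minimizers, which is some face of $P$, contains $f\cup g$ and hence $h$.

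The main hurdle is not any single estimate but the bookkeeping that establishes the correspondence between the combinatorial description $\sigma_f=\mathrm{cone}(\nu_j:j\in I_f)$ and the geometric definition via minimization; once that is in place, each fan axiom reduces to a standard statement about face incidences of $P$. I note in passing that dropping the non-empty-interior hypothesis would cause only strong convexity to fail: $\sigma_P$ (the cone attached to the top face $P$) would then pick up the annihilator of the affine span of $P$ as a nontrivial linear subspace, while every other verification goes through unchanged.
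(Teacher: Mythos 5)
The paper does not prove this lemma; it is quoted as a standard fact with a pointer to \cite{toricFulton}, so there is no internal proof to compare against. Your argument is the standard one found in that reference and is correct in substance: the identification $\sigma_f=\mathrm{cone}(\nu_j\colon j\in I_f)$ via linear-programming (Farkas) duality, strong convexity from full-dimensionality of $P$, and the two fan axioms from the order-reversing incidence correspondence between faces of $P$ and cones of $\Sigma_P$. Three minor repairs are needed. The presentation $A(\xi)+\alpha\geq 0$ may be redundant, so $I_f$ should be defined as the set of constraints active on all of $f$, not literally as ``the facets containing $f$''. The empty face must be excluded from the indexing (otherwise $\sigma_{\varnothing}=N_{\Bbb{R}}$ vacuously, which is not strongly convex). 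And the face-closure step is misphrased: $\xi\in\check\sigma_f$ lives in $M_{\Bbb{R}}$, the same space that contains $P$, so ``the face of $P$ on which $\xi$ realizes $\min(\xi|_P)$'' does not parse; the face you want is the smallest face of $P$ containing $\xi_0+\epsilon\xi$ for $\xi_0$ in the relative interior of $f$ and $\epsilon>0$ small, whose active constraint set is $\{j\in I_f \ | \ \nu_j(\xi)=0\}$. None of this changes the architecture of the proof, and your closing observation that only strong convexity (for the cone $\sigma_P$ attached to the top face) fails without the interior hypothesis is accurate.
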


We finish this discussion with some small, but useful results 
about the polar of a convex set.  

\begin{lemma}
If $C$ is convex and $0 \in \mathrm{int}(C)$, then $(C^\circ)^\circ = C$.
\end{lemma}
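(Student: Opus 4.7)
I would prove the two inclusions of the bipolar identity separately, with the nontrivial direction coming from a separation argument. (Note that since $(C^\circ)^\circ$ is, by definition, an intersection of closed half-spaces, it is automatically closed, so the statement implicitly requires $C$ to be closed; I will assume this, as in the paper's intended setting all such $C$ are rational convex polyhedral and hence closed.)

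The easy inclusion $C \subseteq (C^\circ)^\circ$ is a direct unwinding of the definition: for any $\nu \in C$ and any $\xi \in C^\circ$, the defining inequality of $C^\circ$ gives $\nu(\xi) + 1 \geq 0$, which is exactly the condition for $\nu$ to lie in $(C^\circ)^\circ$ (using the analogous polar construction for subsets of $M_\Bbb{R}$, which makes sense by the canonical identification $(M^*)^* = M$ in the finite rank free setting).

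For the reverse inclusion, I would argue by contrapositive. Suppose $\nu_0 \in N_\Bbb{R} \setminus C$. Since $C$ is closed and convex and $\{\nu_0\}$ is a compact convex set disjoint from $C$, the Hahn-Banach separation theorem produces a linear functional $\xi \in M_\Bbb{R}$ and a real number $\alpha$ such that
\begin{equation}
\xi(\nu) \geq \alpha \text{ for all } \nu \in C, \qquad \xi(\nu_0) < \alpha.
\end{equation}
I would then use the hypothesis $0 \in \mathrm{int}(C)$ to pin down the sign of $\alpha$: plugging in $\nu = 0$ gives $\alpha \leq 0$, and if $\alpha = 0$ then $\xi$ would be nonnegative on a neighborhood of $0$, forcing $\xi \equiv 0$, which contradicts $\xi(\nu_0) < \alpha = 0$. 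Hence $\alpha < 0$. Rescaling by the positive number $-1/\alpha$ and setting $\xi' := -\xi/\alpha$, the inequalities above become
\begin{equation}
\xi'(\nu) + 1 \geq 0 \text{ for all } \nu \in C, \qquad \xi'(\nu_0) + 1 < 0,
\end{equation}
so $\xi' \in C^\circ$ witnesses $\nu_0 \notin (C^\circ)^\circ$.

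The only real obstacle is the clean treatment of the separation and the normalization; the interior hypothesis is used precisely to ensure that the separating functional can be scaled so that its offset equals $1$, which is what makes the polar construction (rather than the bare dual cone $\check C$) reconstruct $C$ exactly.
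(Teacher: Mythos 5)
Your proof is correct and is essentially the paper's argument made explicit: the paper's one-line proof rests on the fact that a closed convex set is the intersection of the affine half-spaces containing it (which is exactly the Hahn--Banach separation you carry out) and uses $0 \in \mathrm{int}(C)$ to normalize each half-space to have constant part $1$. Your added observation that $C$ must implicitly be closed is a fair point the paper glosses over, though harmless in its polyhedral setting.
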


\begin{proof}
$C$ is determined by the affine half-spaces containing it.  These half-spaces
contain $0$ in their interior, so they have a defining inequality with constant
part $= 1$.  So we have $C = \cap_{\xi \in C^\circ} \{ \nu \in N_\Bbb{R} \ 
| \ \nu(\xi) + 1 \geq \ 0 \}$,
and the result is clear.
\end{proof}

\begin{corollary}
Assume $0 \in \mathrm{int}(P)$, then $P^\circ = \mathrm{conv}(\{ \nu_j/\alpha_j
\}_{j=0}^r \cup \{ 0 \})$.
\end{corollary}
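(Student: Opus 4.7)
The plan is to set $Q := \mathrm{conv}(\{\nu_j/\alpha_j\}_{j=1}^r \cup \{0\}) \subseteq N_\Bbb{R}$, verify directly that $Q^\circ = P$, and then invoke the bipolar identity from the preceding lemma to conclude $P^\circ = Q$.

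First I would observe that the hypothesis $0 \in \mathrm{int}(P)$ forces $\alpha_j = \nu_j(0) + \alpha_j > 0$ for every $j$, so each ratio $\nu_j/\alpha_j$ is well-defined and $Q$ is a compact convex set containing $0$.

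Next I would compute $Q^\circ$ directly. A point $\xi \in M_\Bbb{R}$ lies in $Q^\circ$ iff $\nu(\xi) + 1 \geq 0$ holds for every $\nu \in Q$. Because $Q$ is the convex hull of finitely many points and the map $\nu \mapsto \nu(\xi) + 1$ is affine in $\nu$, it suffices to check the inequality at the generators $0$ and $\nu_j/\alpha_j$: the inequality at $0$ is automatic, while at $\nu_j/\alpha_j$ it reads $\nu_j(\xi)/\alpha_j + 1 \geq 0$, which (using $\alpha_j > 0$) is equivalent to $\nu_j(\xi) + \alpha_j \geq 0$. These are exactly the defining inequalities of $P$, so $Q^\circ = P$.

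Taking polars and applying the bipolar identity $(Q^\circ)^\circ = Q$ then yields $P^\circ = Q$. The one subtlety is that the preceding lemma proves $(C^\circ)^\circ = C$ only under the assumption $0 \in \mathrm{int}(C)$, and for $C = Q$ this holds iff the $\nu_j$ positively span $N_\Bbb{R}$, i.e., iff $P$ is bounded. In the unbounded case I would either invoke the standard bipolar theorem for closed convex sets containing $0$, or else prove the remaining inclusion $P^\circ \subseteq Q$ directly by a separating-hyperplane argument: given $\nu \in N_\Bbb{R} \setminus Q$, separate $\nu$ from $Q$ by some $\xi \in M_\Bbb{R}$, normalize so that the value attained on $Q$ is bounded above by $-1$ at $\xi$, and check that this $\xi$ in fact lies in $P$ while violating $\nu(\xi) + 1 \geq 0$, which witnesses $\nu \notin P^\circ$.
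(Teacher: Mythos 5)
Your proof is correct and follows essentially the same route as the paper: both establish $P = \mathrm{conv}(\{\nu_j/\alpha_j\} \cup \{0\})^\circ$ from the positivity of the $\alpha_j$ and then dualize. You are in fact more careful than the paper about the bipolar step when $P$ is unbounded (where $0 \notin \mathrm{int}(Q)$ and the preceding lemma does not literally apply), which the paper glosses over with its remark about polars preserving strict inclusions.
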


\begin{proof}
Since $\alpha_j > 0$, $P  = 
\mathrm{conv}(\{ \nu_j/\alpha_j \}_{j=0}^r \cup \{ 0 \})^\circ$\ .
Taking polars of convex sets reverses inclusions of convex sets, and  
preserves strictness for
convex sets containing 0.
\end{proof}

\begin{corollary}
\label{redundantPolar}
Assume $0 \in \mathrm{int}(P)$.
Given $\nu_0 \in M_\Bbb{R}$
and $\alpha_0 \in \Bbb{R}_{> 0}$.
$P$ is contained in the affine half-space  
$\{ \xi \in M_\Bbb{R} \ | \ \nu_0(\xi) + \alpha_0 \geq 0 \}$ if and only if
$\nu_0/\alpha_0 \in \mathrm{conv}(\{ \nu_j/\alpha_j \}_{j=0}^r \cup \{ 0 \})$.
\end{corollary}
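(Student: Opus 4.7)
The plan is to reduce this to the previous corollary by recognizing that containment in a half-space is precisely the defining condition for membership in the polar. Concretely, since $\alpha_0 > 0$, the containment $P \subseteq \{\xi \in M_\Bbb{R} \mid \nu_0(\xi) + \alpha_0 \geq 0\}$ is equivalent, after dividing through by $\alpha_0$, to the statement that $(\nu_0/\alpha_0)(\xi) + 1 \geq 0$ for every $\xi \in P$. Taking the natural companion definition to the polar already introduced (i.e.\ the polar of $P \subseteq M_\Bbb{R}$ regarded as a subset of $N_\Bbb{R}$), this says exactly that $\nu_0/\alpha_0 \in P^\circ$.

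Next, I would invoke the preceding corollary, which identifies
\begin{equation}
P^\circ = \mathrm{conv}\bigl(\{\nu_j/\alpha_j\}_{j=0}^r \cup \{0\}\bigr),
\end{equation}
under the hypothesis that $0 \in \mathrm{int}(P)$. Combining the two equivalences yields the desired biconditional.

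The only real content to verify is the opening equivalence: that the half-space defined by $(\nu_0,\alpha_0)$ contains $P$ iff the normalized pair $\nu_0/\alpha_0$ lies in the polar. This is straightforward because $\alpha_0 > 0$ lets us rescale without reversing the inequality; the hypothesis $0 \in \mathrm{int}(P)$ also guarantees that any defining inequality for a half-space containing $P$ has strictly positive constant term, so the normalization is always legitimate. Thus there is no real obstacle, and the proof reduces to two short observations stacked on top of the preceding corollary.
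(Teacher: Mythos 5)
Your argument is correct and matches the paper's intent exactly: the paper states this corollary without proof precisely because it is the immediately preceding corollary ($P^\circ = \mathrm{conv}(\{\nu_j/\alpha_j\}_{j=0}^r \cup \{0\})$) combined with the observation that containment of $P$ in the half-space defined by $(\nu_0,\alpha_0)$ with $\alpha_0>0$ is, after rescaling, the defining condition for $\nu_0/\alpha_0 \in P^\circ$. Your closing remark about $0\in\mathrm{int}(P)$ forcing positive constant terms is not actually needed here (positivity of $\alpha_0$ is a hypothesis), but it does no harm.
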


\smallskip

\subsection*{Toric varieties}
\label{ToricSection}

A toric variety $X$ is a normal irreducible complex algebraic variety on which
an algebraic torus $T \cong (\Bbb{C}^\times)^n$ acts and
$\exists\ x \in X$ such that $t \mapsto t \cdot x$
defines an open immersion 
\begin{equation}
\iota_x \colon  T \hookrightarrow X.
\end{equation}
Some standard references for toric varieties are 
\cite{toricOda, toricFulton, toricAudin}.

The open immersion $\iota_x$ identifies characters 
$\xi: T \rightarrow \mathbb{C}^\times$
with rational functions on $X$.  Since the divisor of a character is supported 
on the
union of $T$-invariant subvarieties, this gives the {\it character-to-divisor  map} 
\begin{equation}
\label{character_to_divisor_map}
\mathrm{div} \colon M \rightarrow \Bbb{Z}^R,
\end{equation}
where $M$ is the group of characters of $T$, and
$R := \{\rho_1, \cdots, \rho_r \}$ is the set of components  of 
$X \setminus \iota_x(T)$.
This means $\Bbb{Z}^R$ is the set of $T$-invariant divisors of $X$.

The cokernel of $\mathrm{div}$ is the Chow
group $A_{n-1}(X)$.  This group can often be identified
with the second integral cohomology group as the
following theorem and corollary state.
\begin{theorem}{(\cite[pp. 63-64]{toricFulton})}
If $Y$ is a complete toric variety,
$A_{n-1}(Y) = \mathrm{H}^2(Y; \Bbb{Z})$
and is torsion free.
\end{theorem}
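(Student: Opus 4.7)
The plan is to identify both $A_{n-1}(Y)$ and $H^2(Y;\Bbb{Z})$ with the cokernel of the character-to-divisor map $\mathrm{div}\colon M \to \Bbb{Z}^R$ in (\ref{character_to_divisor_map}), and then to read off torsion-freeness from an even-dimensional cell decomposition. The main obstacle is the cohomological identification, since $Y$ is allowed to be singular, so one cannot directly invoke a smooth cell structure; a reduction to the smooth case via an equivariant toric resolution will be needed.

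For the Chow group side, I would stratify $Y$ by $T$-orbits. The complement of the open torus $T \cong (\Bbb{C}^\times)^n \hookrightarrow Y$ is the union of the $T$-invariant prime divisors $D_\rho$ indexed by $\rho \in R$. The excision exact sequence for codimension-one Chow groups gives a right-exact $A_{n-1}(\bigcup D_\rho) \to A_{n-1}(Y) \to A_{n-1}(T) \to 0$, and $A_{n-1}(T) = 0$ because $T$ is an open subvariety of affine space. Hence the $[D_\rho]$ span $A_{n-1}(Y)$, and relations arise from principal divisors supported on $Y \setminus T$; normality of $Y$ forces such rational functions to be $T$-eigenfunctions, i.e.\ characters, producing the exact sequence $M \xrightarrow{\mathrm{div}} \Bbb{Z}^R \to A_{n-1}(Y) \to 0$.

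For cohomology, I would first handle the smooth complete case by choosing a generic one-parameter subgroup $\lambda\colon \Bbb{C}^\times \to T$ and taking the associated Bia\l ynicki-Birula stratification. This is a genuine CW decomposition whose cells are indexed by the maximal cones of the fan and have only even real dimension; cellular cohomology then gives $H^{\mathrm{odd}} = 0$, each $H^{2k}$ free abelian, $H^2$ spanned by the classes $[D_\rho]$, and relations exactly $\mathrm{div}(m) = 0$, yielding $H^2(Y;\Bbb{Z}) \cong \mathrm{coker}(\mathrm{div})$ together with torsion-freeness. For singular complete $Y$ I would choose an equivariant resolution $\pi\colon \tilde Y \to Y$ obtained by subdividing the fan $\Sigma_Y$: the new rays correspond to exceptional divisors of $\pi$, and comparing the character-to-divisor exact sequences for $\tilde Y$ and $Y$ via $\pi_\ast$ on $H^2$ transfers the smooth computation to $Y$ while preserving the absence of torsion.

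Finally, $\mathrm{div}$ is injective because completeness of $\Sigma_Y$ forces the rays to span $N_\Bbb{R}$, so the sequence upgrades to a short exact sequence $0 \to M \to \Bbb{Z}^R \to A_{n-1}(Y) \to 0$; this simultaneously identifies $A_{n-1}(Y)$ with $H^2(Y;\Bbb{Z})$ as abelian groups and exhibits both as a torsion-free quotient of $\Bbb{Z}^R$. The delicate point throughout is the singular case: controlling how the subdivision of $\Sigma_Y$ interacts with both sides of the comparison is what I expect to absorb the bulk of the work.
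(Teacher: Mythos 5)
The paper does not actually prove this statement --- it is quoted from Fulton with a page reference --- so there is no internal argument to compare against, and I will judge your proposal on its own terms. Your first two steps are sound and are essentially Fulton's: the localization sequence $A_{n-1}(Y\setminus T)\to A_{n-1}(Y)\to A_{n-1}(T)\to 0$, together with the observation that a rational function whose divisor is supported off $T$ is a unit on $T$ and hence a constant times a character, gives the presentation $M\xrightarrow{\mathrm{div}}\Bbb{Z}^R\to A_{n-1}(Y)\to 0$; and for $Y$ smooth and complete the even-cell decomposition indexed by maximal cones yields $H^2(Y;\Bbb{Z})$ free with the same presentation.

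The genuine gap is exactly the step you flagged as delicate --- the reduction of the singular case to a smooth resolution --- and it cannot be closed, because the statement as literally written is false for general singular complete toric varieties. Keep the fan of $\Bbb{P}^2$ with rays $(1,0),(0,1),(-1,-1)$ but refine the lattice to $N'=\Bbb{Z}^2+\Bbb{Z}\cdot\frac{1}{3}(1,2)$: the rays stay primitive, the resulting complete surface $Y'$ is the quotient of $\Bbb{P}^2$ by a $\Bbb{Z}/3$-action, and since $M'\subset M$ has index $3$ while $\mathrm{div}$ is injective, one gets $A_1(Y')\cong\Bbb{Z}\oplus\Bbb{Z}/3$, which has torsion, whereas $H^2(Y';\Bbb{Z})\cong\mathrm{Pic}(Y')\cong\Bbb{Z}$. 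Consequently no comparison with a toric resolution $\pi\colon\tilde Y\to Y$ can ``transfer'' the smooth computation: $\pi_*$ on divisor classes is surjective but far from injective, $\pi^*$ on $H^2$ goes the other way, and the torsion appears precisely where the two sides disagree. What Fulton establishes on pp.~63--64 is the presentation of $A_{n-1}$ above together with the freeness of $\mathrm{Pic}(Y)$ whenever the fan contains an $n$-dimensional cone; the identification with topology goes through $\mathrm{Pic}(Y)\cong H^2(Y;\Bbb{Z})$ (exponential sequence plus $H^i(Y,\mathcal{O}_Y)=0$ for $i>0$ on a complete toric variety), and $\mathrm{Pic}(Y)=A_{n-1}(Y)$ requires $Y$ smooth, or at least locally factorial. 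The theorem should therefore be read either with $Y$ smooth (as it is in the Givental--Hori--Vafa comparison where it is invoked) or with $A_{n-1}$ replaced by $\mathrm{Pic}$; under either reading your smooth-case argument already suffices and the resolution step should simply be discarded.
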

\begin{corollary}
\label{Chow_equals_cohomology}
If $X$ is the total space of a split bundle of rank $c$ over a complete 
toric variety, $Y$, then $X$ is toric,
$A_{n-1}(X) = \mathrm{H}^2(X, \Bbb{Z}) =
A_{n-1-c}(Y) = \mathrm{H}^2(Y; \Bbb{Z})$,
and these groups are torsion free. 
\end{corollary}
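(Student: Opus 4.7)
The plan is to prove the four equalities in sequence, exploiting the toric structure that $X$ inherits from $Y$ together with an explicit description of the fan of $X$.

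First I would verify that $X$ is toric. After twisting by characters if necessary, each line summand $L_i$ can be taken $T_Y$-equivariant, so the product torus $T_Y \times (\Bbb{C}^\times)^c$ acts on $X = \mathrm{Tot}(\oplus_i L_i)$, with $T_Y$ acting through the base and $(\Bbb{C}^\times)^c$ scaling the fibers. Over the open orbit of $T_Y$ in $Y$ the fiber is a copy of $\Bbb{C}^c$ on which the full torus has an open dense orbit, and normality of $X$ is immediate from local triviality over the normal variety $Y$.

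Next I would pin down the fan $\Sigma_X$. Writing $N_X = N_Y \oplus \Bbb{Z}^c$ and letting $\varphi_i \colon |\Sigma_Y| \to \Bbb{R}$ be the integral piecewise-linear support function encoding $L_i$, the cones of $\Sigma_X$ are the sums $\tilde{\sigma} + \mathrm{cone}(e_{i_1}, \ldots, e_{i_k})$, where $\tilde{\sigma}$ is the lift of $\sigma \in \Sigma_Y$ via the graph of $\varphi = (\varphi_1, \ldots, \varphi_c)$ and $\{e_1, \ldots, e_c\}$ is the standard basis of the $\Bbb{Z}^c$ summand. The rays of $\Sigma_X$ are thus the lifted rays of $\Sigma_Y$ together with $e_1, \ldots, e_c$, so the $T$-invariant divisors of $X$ number $r_X = r_Y + c$.

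With this fan in hand, the character-to-divisor map of $X$ takes the block form
\begin{equation}
\mathrm{div}_X = \begin{pmatrix} \mathrm{div}_Y & -\Phi \\ 0 & I_c \end{pmatrix} \colon M_Y \oplus \Bbb{Z}^c \longrightarrow \Bbb{Z}^{r_Y} \oplus \Bbb{Z}^c,
\end{equation}
where $\Phi$ records the values of $\varphi$ on the primitive ray generators of $\Sigma_Y$. The last $c$ columns surject onto the second summand and can be used to kill the second coordinate of any representative, giving $A_{n-1}(X) = \mathrm{coker}(\mathrm{div}_X) \cong \mathrm{coker}(\mathrm{div}_Y) = A_{n-1-c}(Y)$. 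On the topological side, scalar multiplication in the fibers realizes $X$ as a deformation retract onto its zero section, yielding $\mathrm{H}^2(X;\Bbb{Z}) \cong \mathrm{H}^2(Y;\Bbb{Z})$, and applying the cited theorem to the complete toric variety $Y$ completes the chain of identifications and delivers the torsion-freeness. The only real bookkeeping obstacle is maintaining compatibility between the chosen equivariant structures on the $L_i$ and the support functions used to lift $\Sigma_Y$; once those conventions are fixed, the rest is a routine verification.
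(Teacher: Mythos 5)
Your proposal is correct and follows essentially the route the paper intends: the paper states this corollary without a written proof, but the block upper-triangular form of $\mathrm{div}_X$ with identity block in the fiber directions is exactly what it later records in lemma \ref{A-formula} and corollary \ref{bigA-formula}, and the reduction to the complete base $Y$ (where the cited Fulton theorem applies) together with the deformation retract onto the zero section is the implicit argument. Your version simply makes explicit the fan of the total space and the resulting isomorphism of cokernels, which is consistent with the paper's machinery.
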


Denote the cokernel of $\mathrm{div}$ by \begin{equation}[-] \colon 
\Bbb{Z}^R \rightarrow A_{n-1}(X),\end{equation}
and write the image of and element $d \in \Bbb{Z}^R$ by $[d] \in A_{n-1}(X).$

Consequently, in the case of corollary \ref{Chow_equals_cohomology}
we have the sequence
\begin{equation}
\label{completeCase}
0 \rightarrow M \rightarrow \Bbb{Z}^R \rightarrow \mathrm{H}^2(X; \Bbb{Z}) \rightarrow 0,
\end{equation}
which is exact.

For toric varieties, the $T$-invariant divisor
\begin{equation}
\label{canon_anticanon}
-\kappa_X := 1 \rho_1 + \cdots + 1 \rho_r\end{equation}
gives a canonical choice of representative
for the anticanonical divisor.

The group $\mathrm{Hom}_\Bbb{Z}(\Bbb{Z}^R, \Bbb{C}^\times)$
acts diagonally on $\Bbb{C}^R$, and thus so does
$\mathrm{Hom}_\Bbb{Z}(A_{n-1}(X), \Bbb{C}^\times)$
via the group homomorphism
\begin{equation}(- \ \circ [-]) \colon
\mathrm{Hom}_\Bbb{Z}(A_{n-1}(X), \Bbb{C}^\times)
\rightarrow
\mathrm{Hom}_\Bbb{Z}(\Bbb{Z}^R, \Bbb{C}^\times)\end{equation}
defined by $h \mapsto h \circ [-].$

If we restrict the induced map on the cotangent space
of the identity, 
\begin{equation}
T^*_{Id} \mathrm{Hom}_\Bbb{Z}(\Bbb{Z}^R, \Bbb{C}^\times)
\rightarrow
T^*_{Id} \mathrm{Hom}_\Bbb{Z}(A_{n-1}(X), \Bbb{C}^\times)
\end{equation}
to the lattice dual to the exponential kernel, we get
$fr \circ [-]$. 
Here $fr$ is the
projection of $A_{n-1}(X)$ onto $A_{n-1}(X)/\mathrm{torsion}$.
For the main applications considered here, $A_{n-1}(X)$ is 
torsion free, and so we can interpret $[-]$ as the 
pullback map on covectors at the identity.

Using the action of $\mathrm{Hom}_\Bbb{Z}(A_{n-1}(X), \Bbb{C}^\times)$
on $\Bbb{C}^R$,
the quotient construction for projective space generalizes to toric 
varieties.

\begin{theorem}{(Cox \cite{CoxHomog})}
A toric variety, $X$, can be obtained as a (GIT) quotient
of $\Bbb{C}^R \setminus \mathcal{P}$
by $\mathrm{Hom}_\Bbb{Z}(A_{n-1}(X), \Bbb{C}^\times)$ \cite{CoxHomog}.
$\mathcal{P}$ is a collection of coordinate subspaces determined by 
the intersection theory on $X$.
\end{theorem}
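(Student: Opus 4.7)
The plan is to follow Cox's homogeneous coordinate construction. Let $\Sigma$ be the fan of $X$ in $N_\mathbb{R}$; the set $R = \{\rho_1,\dots,\rho_r\}$ of $T$-invariant prime divisors is in bijection with the rays of $\Sigma$. I would first set up the exact sequence
\begin{equation}
0 \rightarrow M \xrightarrow{\mathrm{div}} \mathbb{Z}^R \xrightarrow{[-]} A_{n-1}(X) \rightarrow 0,
\end{equation}
exact on the left provided $X$ has no torus factors (which one may assume without loss of generality; otherwise one works modulo torsion as discussed earlier in the section). Dualizing with $\mathrm{Hom}_\mathbb{Z}(-,\mathbb{C}^\times)$ gives a short exact sequence of abelian groups
\begin{equation}
1 \rightarrow G \rightarrow (\mathbb{C}^\times)^R \rightarrow T \rightarrow 1,
\end{equation}
where $G = \mathrm{Hom}_\mathbb{Z}(A_{n-1}(X),\mathbb{C}^\times)$. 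This is the group that will perform the quotient, and it acts on $\mathbb{C}^R$ diagonally via the composition described just before the theorem.

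Next, I would define the exceptional locus $\mathcal{P}$. For each cone $\sigma \in \Sigma$, let $\sigma(1)$ be its set of rays and consider the monomial $z^{\hat{\sigma}} := \prod_{\rho_i \notin \sigma(1)} z_i$. Let $B \subseteq \mathbb{C}[z_1,\dots,z_r]$ be the ideal generated by $\{z^{\hat{\sigma}} : \sigma \in \Sigma_{\max}\}$ and set $\mathcal{P} = V(B)$; this is visibly a union of coordinate subspaces, and the cones contributing to it are precisely those given by the combinatorics of $\Sigma$, which is equivalent to intersection-theoretic data on $X$ (primitive collections). The open complement $\mathbb{C}^R \setminus \mathcal{P}$ is then the union of the $G$-invariant affine opens $U_\sigma' := \{z \in \mathbb{C}^R : z_i \neq 0 \text{ for } \rho_i \notin \sigma(1)\}$ as $\sigma$ ranges over $\Sigma$.

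The core of the proof is a local identification $U_\sigma'/G \cong U_\sigma$, where $U_\sigma = \mathrm{Spec}\,\mathbb{C}[\sigma^\vee \cap M]$ is the usual affine toric open of $X$. I would compute $\mathcal{O}(U_\sigma')^G$ using the exact sequence above: a Laurent monomial $\prod z_i^{a_i}$ (with $a_i \geq 0$ for $\rho_i \in \sigma(1)$) is $G$-invariant iff the class $[\sum a_i \rho_i]$ vanishes in $A_{n-1}(X)$, i.e.\ iff $\sum a_i \rho_i = \mathrm{div}(m)$ for some $m \in M$, which via the character-to-divisor map is exactly the condition $m \in \sigma^\vee \cap M$. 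This identifies $\mathcal{O}(U_\sigma')^G$ with $\mathbb{C}[\sigma^\vee \cap M]$. I would then check compatibility of these isomorphisms under the gluings determined by face inclusions of cones, obtaining a morphism $\mathbb{C}^R \setminus \mathcal{P} \to X$ which exhibits $X$ as the categorical, and indeed geometric, quotient. Finally, identifying this with a GIT quotient amounts to choosing a linearization coming from an ample class in $A_{n-1}(X)$ and checking the semistable locus coincides with $\mathbb{C}^R \setminus \mathcal{P}$.

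The main obstacle I anticipate is the local computation of invariants together with the gluing: writing out why $G$-invariance on Laurent monomials pulls back to the lattice condition $m \in \sigma^\vee$ requires one to track carefully both the cokernel description of $A_{n-1}(X)$ and the sign conventions on $\mathrm{div}$, and to verify that the $U_\sigma'$ glue correctly to cover $\mathbb{C}^R \setminus \mathcal{P}$. A secondary subtlety, for non-simplicial $X$, is that the quotient may only be a good (not geometric) quotient, so some care is needed in specifying the GIT sense in which the statement holds.
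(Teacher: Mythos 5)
The paper does not prove this statement: it is quoted as a known result directly from Cox \cite{CoxHomog}, so there is no internal proof to compare against. Your sketch correctly reproduces Cox's own argument --- the dualized divisor sequence $1 \to G \to (\Bbb{C}^\times)^R \to T \to 1$, the irrelevant locus $\mathcal{P} = V(B)$ built from the monomials $z^{\hat{\sigma}}$ for maximal cones, the local identification $\mathcal{O}(U_\sigma')^G \cong \Bbb{C}[\check{\sigma} \cap M]$ via the condition $\sum a_i \rho_i = \mathrm{div}(m)$, and the caveat that the quotient is only geometric in the simplicial case --- so it is an accurate account of the proof the paper is implicitly relying on.
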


The ring of regular functions on $\Bbb{C}^R$ is called the {\it Cox
homogeneous coordinate ring} of $X$, and is generated by 
$\{X_\rho \}_{\rho \in R}$.  The degree of $X_\rho$ is $[1 \rho] \in A_{n-1}(X)$.

A more classical way to define a toric variety is from 
a fan of rational strongly convex polyhedral cones.
The construction of a toric variety goes as follows.
Fix a torus $T$ and let $M$ be its character group.   
Denote $N = \mathrm{Hom}_\Bbb{Z}(M, \Bbb{Z})$
as before.
Let $\Sigma$ be a fan of strongly convex rational 
polyhedral cones
in $N_\Bbb{R}$. Then define the 
toric variety $X(\Sigma)$ acted on by $T$, 
to be the union of affine 
charts $U_\sigma$ for each $\sigma \in \Sigma$.  Where
$U_\sigma$ is the spectrum of the subring of regular
functions $R_\sigma$ on $T$ that is generated by characters
$\xi$ in the dual cone $\check{\sigma}$.

\begin{theorem} (Sumihiro \cite{sumihiro1, sumihiro2})
Every toric variety can be obtained via the fan construction
in a unique way.
\end{theorem}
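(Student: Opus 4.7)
The plan is to reduce from the global toric variety to affine toric pieces, classify those by strongly convex rational polyhedral cones in $N_\Bbb{R}$, and then assemble the cones into a fan recording the gluing combinatorics.

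The main ingredient, and the only genuinely nontrivial step, is the existence of a $T$-invariant affine open cover of $X$. This is precisely Sumihiro's equivariant completion / equivariant affine cover result for a normal variety with a torus action: every point of $X$ admits a $T$-invariant affine open neighborhood. Once we have this, the analysis becomes local. Assuming this, my first step is to classify the affine building blocks: if $U \subseteq X$ is a $T$-invariant affine open containing the open orbit $\iota_x(T)$, then $U = \mathrm{Spec}\, \Bbb{C}[S_U]$ for a finitely generated sub-semigroup $S_U \subseteq M$ (the characters that extend regularly to $U$); normality of $X$ forces $S_U$ to be saturated, and finite generation plus saturation on $M$ characterize $S_U$ as $\check{\sigma}_U \cap M$ for a unique rational convex polyhedral cone $\sigma_U \subseteq N_\Bbb{R}$. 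Strong convexity of $\sigma_U$ is equivalent to $T$ acting with an open dense orbit on $U$ with trivial stabilizer of a point, which holds by hypothesis.

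Next, I would let $\Sigma$ be the collection of the cones $\sigma_U$ as $U$ ranges over all $T$-invariant affine opens of $X$, together with all of their faces. The two fan axioms need to be verified: (i) faces of $\sigma_U$ give $T$-invariant affine opens of $U$ (this is the standard observation that the face $\tau \preceq \sigma_U$ corresponds to localizing $\Bbb{C}[\check{\sigma}_U \cap M]$ at a character vanishing on $\tau^\perp$, so $U_\tau \subseteq U_{\sigma_U}$ is a principal $T$-invariant open); (ii) for two such affines $U_1, U_2 \subseteq X$, their intersection $U_1 \cap U_2$ is again $T$-invariant and affine (using Sumihiro locally around each point of the intersection together with separatedness), and unwinding the semigroup description gives $\sigma_{U_1 \cap U_2} = \sigma_{U_1} \cap \sigma_{U_2}$ as a common face of both. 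This establishes that $\Sigma$ is a fan and that the natural map $X(\Sigma) \to X$, built from the affine identifications $U_{\sigma_U} \cong U$ glued along their intersections, is a $T$-equivariant isomorphism.

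For uniqueness, one observes that the fan is reconstructed intrinsically from $X$: the rays of $\Sigma$ are in bijection with the $T$-invariant prime divisors of $X$ (the elements of $R$ from \eqref{character_to_divisor_map}), and the higher-dimensional cones are recovered as the cones $\sigma_U$ associated to the maximal $T$-invariant affine opens, which are themselves intrinsically defined by the orbit structure of $X$. Thus any two fans giving rise to $T$-equivariantly isomorphic toric varieties must coincide. I expect the main obstacle, and the one that genuinely requires Sumihiro's original arguments rather than easy bookkeeping, to be step one: producing the $T$-invariant affine cover in the first place, since without an a priori fan there is no combinatorial handle, and the proof must go through linearization of the $T$-action on line bundles associated to divisors containing a chosen point.
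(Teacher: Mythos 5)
The paper does not actually prove this statement: it is quoted as a black box with a citation to Sumihiro's two papers, so there is no internal argument to compare yours against. What you have written is the standard proof of the fan classification as it appears in Oda and in Fulton's book, and it is correct in outline. You have also correctly located where the real content sits: the existence of a $T$-invariant affine open cover is Sumihiro's theorem proper (proved via equivariant completion and linearization of the action on line bundles), and everything after that is semigroup bookkeeping. Two places in your sketch deserve more care if this were to be written out. First, the claim that $\sigma_{U_1 \cap U_2} = \sigma_{U_1} \cap \sigma_{U_2}$ is a \emph{common face} of both cones is not automatic from ``unwinding the semigroup description'': separatedness gives you that $U_1 \cap U_2$ is affine with coordinate semigroup $(\check{\sigma}_{U_1} \cap M) + (\check{\sigma}_{U_2} \cap M)$, and one must then invoke (the converse direction of) the separation lemma to conclude that this intersection is a face of each cone cut out by a single character $u \in \check{\sigma}_{U_1} \cap (-\check{\sigma}_{U_2}) \cap M$; this is the one combinatorial step that is not pure translation. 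Second, the uniqueness statement should be read as uniqueness once the isomorphism $T \cong \iota_x(T)$ is fixed (equivalently, once the lattice $N$ is fixed); changing the base point $x$ or composing with an automorphism of $T$ replaces $\Sigma$ by its image under an element of $GL(N)$, and your intrinsic-reconstruction argument via $T$-invariant prime divisors and maximal $T$-invariant affine opens is exactly the right way to see uniqueness in that sense.
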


\subsection*{Functions on a toric variety}

As indicated by the character-to-divisor map 
((\ref{character_to_divisor_map}) above), 
the function theory of $X$ is understood in terms of the
function theory of the torus $T$.  The identification of
rational function on $X$ with those on $T$ depends on the
choice of $x$ defining the open immersion $\iota_x$.

If $W$ is a rational function on $X$,
\begin{equation}
\iota_x^*W =  \sum_j q_j \xi_j
\end{equation}
is the pullback to $T$, where $\xi_j \in M$.

If one makes another choice, $x'$, there is a
unique element 
$t \in T$ such that $x' = t \cdot x$. 
For the same function $W$, 
\begin{equation}
\label{pullback_x'}
\iota_{x'}^* W = \iota_{(t \cdot x)}^*W = \sum_j q_j \xi_j(t)  \xi_j .
\end{equation}

Denote by $(\mathbb{C}^\times)^{\Xi}$ the space of functions on $T$ 
with terms
$\Xi := \{ \xi_1, \cdots, \xi_s \}$. The action
$T \circlearrowright (\mathbb{C}^\times)^\Xi$ coming from the 
different choices of $x \in X$ is given by
\begin{equation}
\label{T_acts_on_function}
t \cdot (q_1 \xi_1+ \cdots+ q_s \xi_s) = 
q_1 \xi_1(t) \xi_1 + \cdots + q_s \xi_s(t) \xi_s .
\end{equation}

It is then natural to eliminate the dependence on $x$ of the expression
of $\iota_x^* W$ by thinking of $W$ as an element the quotient
\begin{equation}
\label{function_space}
W \in (\mathbb{C}^\times)^{\Xi} / T .
\end{equation}

\smallskip

\smallskip

\section{Linear data}
Associated to a toric variety is the character-to-divisor map, $\mathrm{div}$.  
It is a remarkable fact that this map is almost enough information
to recover the original toric variety.  For instance, if the toric variety 
is projective and we are also given a (very) ample divisor class, $a$, in the Chow group 
($= \mathrm{coker(div)}$), the variety can be recovered.  

We will prove this, and a more general result concerning total spaces
of split bundles over certain toric varieties.  Motivated by this result
we call the pair $(\mathrm{div}, a)$ {\it the linear data} associated to
$X$.  Here $a$ is an arbitrary element 
of $A_{n-1}(X)_\Bbb{R}$.

\begin{definition}
Precisely, we define for an abelian group  $H$, linear $H$-data.
This the following information:
\begin{enumerate}
\item a finite rank free abelian group $G$,
\item a homomorphism $C \colon G \rightarrow \Bbb{Z}^t$, and
\item an element $c \in \mathrm{coker}(C)_H$.
\end{enumerate}
\end{definition}

A key fact, that we will exploit in our construction of the dual Landau-Ginzburg
model, is that we can define linear $\Bbb{C}/\Bbb{Z}$-data 
associated to a function $W$ on a
toric variety.   Analogous to the case of projective toric varieties, 
the linear data associated to $W$  recovers $W$.

\subsection*{Linear data associated to a toric varieties}
In section \ref{definitions}, we defined a polyhedral set
from a homomorphism $A \colon M \rightarrow \Bbb{Z}^r$,
and an element $\alpha \in (\Bbb{Z}^r)_\Bbb{R}$.  If 
a polyhedral set had non-empty interior, we defined
its inward normal fan.  Finally, from a fan we defined
a toric variety.

Instead, we would like to use as our starting point the 
linear data 
\begin{equation}
(A, a).
\end{equation}

\begin{definition}
Assume there exists $\alpha \in (\Bbb{Z}^r)_\Bbb{R}$ such
that $\alpha$ maps to $a$ in $\mathrm{coker}(A)$, and
$(A,\alpha)$ defines a rational convex polyhedral with
non-empty interior.
Define $X(A,a)$ to be the toric variety defined by $A$
and $\alpha$.
\end{definition}

The lemma below shows that $X(A,a)$ is independent of the choice of $\alpha$.
Specifically, different choices of $\alpha$ correspond to translation of the 
rational convex polyhedral set 
by an element of $M_\Bbb{R}$, and thus give the same inward normal fan.

The following notation for our polyhedral sets will be used throughout 
the paper.
\begin{equation}
P_\alpha = \{ \xi \in M_\Bbb{R} \ | \ A(\xi) + \alpha \geq 0 \} .
\end{equation}

\begin{lemma}
If $\alpha - \alpha' = A(\xi_0)$ then $P_{\alpha'} = \xi_0 + P_\alpha$.
\end{lemma}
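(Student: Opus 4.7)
The plan is to prove the equality of sets by a direct double inclusion, using only linearity of $A$ and the hypothesis $\alpha' = \alpha - A(\xi_0)$. The point is that translating $\xi$ by $\xi_0$ exactly compensates for the shift from $\alpha$ to $\alpha'$ in the defining inequalities.

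First I would take $\xi \in P_\alpha$, so that $A(\xi) + \alpha \geq 0$ coordinatewise, and verify that $\xi_0 + \xi \in P_{\alpha'}$. By linearity,
\begin{equation}
A(\xi_0 + \xi) + \alpha' = A(\xi_0) + A(\xi) + \alpha' = (\alpha - \alpha') + A(\xi) + \alpha' = A(\xi) + \alpha \geq 0,
\end{equation}
giving the inclusion $\xi_0 + P_\alpha \subseteq P_{\alpha'}$. For the reverse inclusion, I would take $\eta \in P_{\alpha'}$, write $\eta = \xi_0 + (\eta - \xi_0)$, and run the same calculation in the other direction to show $\eta - \xi_0 \in P_\alpha$. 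Equivalently, one just observes that the roles of $(\alpha,\xi_0)$ and $(\alpha',-\xi_0)$ are symmetric, since $\alpha' - \alpha = A(-\xi_0)$.

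There is no real obstacle here; the statement is essentially an unwinding of the definition of $P_\alpha$ together with the $\mathbb{Z}$-linearity of $A$. The lemma is exactly what is needed to see that the normal fan, and hence the toric variety $X(A,a)$, depends only on the class $a = [\alpha] \in \mathrm{coker}(A)_{\mathbb{R}}$ and not on the chosen lift $\alpha$, since any two lifts differ by an element of $A(M)$ and thus the two polyhedral sets $P_\alpha$ and $P_{\alpha'}$ differ by a translation in $M_\mathbb{R}$, which preserves inward normal fans.
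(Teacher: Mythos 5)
Your proposal is correct and follows essentially the same route as the paper, which proves the statement as a single chain of equivalences $\xi \in P_\alpha \iff A(\xi)+\alpha \geq 0 \iff A(\xi+\xi_0)+\alpha' \geq 0 \iff \xi+\xi_0 \in P_{\alpha'}$; your two inclusions are just this chain read in both directions. Nothing is missing.
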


\begin{proof}
$\xi \in P_\alpha \iff A(\xi) + \alpha \geq 0 \iff A(\xi) + A(\xi_0) + \alpha' \geq 0
\iff A(\xi + \xi_0) + \alpha' \geq 0 \iff \xi + \xi_0 \in P_{\alpha'}$ .
\end{proof}

\begin{theorem}
\label{projPropF}
If $Y$ is projective and $a \in A_{n-1}(Y)$ corresponds to a very ample line bundle
then $Y = X(\mathrm{div}, a)$.
\end{theorem}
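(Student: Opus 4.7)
The plan is to reduce the statement to the standard correspondence between projective toric varieties with a chosen very ample equivariant divisor and full-dimensional lattice polytopes. I would proceed as follows.

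First, I would choose a lift of $a$. Since $Y$ is complete, corollary \ref{Chow_equals_cohomology} (in its degenerate form, $c = 0$) gives the exact sequence
\begin{equation}
0 \to M \to \mathbb{Z}^R \xrightarrow{[-]} A_{n-1}(Y) \to 0,
\end{equation}
so any $a \in A_{n-1}(Y)$ lifts to some $\alpha \in \mathbb{Z}^R$, and by the preceding lemma the inward normal fan depends only on the class $a$, not on the lift $\alpha$. It therefore suffices to produce a \emph{single} lift $\alpha$ such that $P_\alpha$ has non-empty interior and such that its inward normal fan coincides with the defining fan $\Sigma_Y$ of $Y$.

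Second, I would use very ampleness to pick $\alpha$ coming from the torus-invariant divisor $D_\alpha = \sum_\rho \alpha_\rho \rho$ associated to the very ample line bundle representing $a$. Standard toric geometry (see \cite{toricFulton}) then identifies the global sections of $\mathcal{O}_Y(D_\alpha)$ with the lattice points of
\begin{equation}
P_\alpha = \{\xi \in M_\mathbb{R} \mid \mathrm{div}(\xi) + \alpha \geq 0\},
\end{equation}
and since $D_\alpha$ is very ample, these global sections define a closed embedding of $Y$ into projective space. In particular $P_\alpha$ is a full-dimensional lattice polytope, hence has non-empty interior, and the facet defined by $\mathrm{div}(\xi)_\rho + \alpha_\rho = 0$ is non-empty for each ray $\rho$.

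Third, I would match the fans. By definition $\Sigma_{P_\alpha}$ assigns to each face $f$ of $P_\alpha$ the cone of linear functionals minimized on $f$. For a facet supported on the hyperplane $\mathrm{div}(\xi)_\rho + \alpha_\rho = 0$, this minimizing cone is exactly the ray $\mathbb{R}_{\geq 0} \cdot \rho$, and the standard computation extends this identification to all faces: the cone assigned to a vertex of $P_\alpha$ corresponding to a maximal cone $\sigma$ of $\Sigma_Y$ equals $\sigma$ itself, because very ampleness ensures that the support function of $D_\alpha$ is strictly convex with respect to $\Sigma_Y$. This gives $\Sigma_{P_\alpha} = \Sigma_Y$, and thus by Sumihiro's theorem $X(\mathrm{div}, a) = X(\Sigma_{P_\alpha}) = X(\Sigma_Y) = Y$.

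The main obstacle is the third step: the strict convexity of the support function of $D_\alpha$ with respect to $\Sigma_Y$, which is precisely the toric translation of very ampleness and the reason the statement requires very ample rather than merely ample or basepoint-free. Once this is in hand, identification of the fans is purely combinatorial; the remaining work is just unwinding definitions and invoking the cited foundational results.
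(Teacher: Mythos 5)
Your proposal is correct and follows essentially the same route as the paper: identify $P_\alpha$ with the polytope of $T$-linearized global sections of the very ample line bundle and invoke the standard fact that its inward normal fan is the fan of $Y$; you have simply unwound the "standard fact" into the strict-convexity argument that the paper leaves to the references. (One minor imprecision: strict convexity of the support function is the toric translation of ampleness, not very ampleness, but since very ample implies ample this does not affect the argument.)
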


\begin{proof}
$P_\alpha$ is the polytope of $T$-linearized global sections of the
very ample line bundle corresponding to $a$.  The result is then the standard
fact that the fan of $Y$ is the normal cone fan of this polytope.
\end{proof}

The following lemma helps us to get a concrete handle on the linear 
data associated with a line bundle over a toric variety.
Note that we identify vectors with one column 
matrices.  
Therefore homomorphisms are written as matrices that multiply
vectors from the left.

\begin{lemma}
\label{A-formula}
If $D$ is a $T$-invariant Cartier divisor and $E$ is the total space of a line bundle
$\mathcal{O}_Y(-D)$ over a toric variety $Y$, then
the character group of $E$ is \begin{equation}M_E = M_Y \oplus \Bbb{Z} \xi, \end{equation}  
where
$\xi$ is a rational section of $\mathcal{O}_Y(D)$ whose
divisor is $D$.  The $T$-invariant Weil divisors of $E$
are the preimages under $p$ of the $T$-invariant Weil divisors
of $Y$ as well as the image of the zero section $\sigma_0 \colon Y \rightarrow E$,
and so
\begin{equation}
\label{preAformula}
\mathrm{div}_E =
\left[
\begin{array}{ccc}
\mathrm{div}_Y & | & D \\
0 & | & \sigma_0(Y)
\end{array}
\right]
\end{equation}
with respect to the decomposition of $M_E$ above and
$\Bbb{Z}^{R_E} = \Bbb{Z}^{R_Y} \oplus \Bbb{Z} \sigma_0(Y)$.
\end{lemma}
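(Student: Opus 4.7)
The plan is to first identify the torus of $E$, then enumerate the $T_E$-invariant Weil divisors, and finally evaluate the character-to-divisor map on an explicit basis of $M_E$.

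First, I would observe that $E = \mathrm{tot}(\mathcal{O}_Y(-D))$ carries a torus action: the $T_Y$ action lifts (since $\mathcal{O}_Y(-D)$ is $T_Y$-equivariant as $D$ is $T_Y$-invariant), and the fiberwise $\mathbb{C}^\times$ rescaling commutes with it, giving a torus $T_E = T_Y \times \mathbb{C}^\times$ with an open dense orbit. Hence $M_E = M_Y \oplus \mathbb{Z}\xi$, where $\xi$ is the character of the fiber $\mathbb{C}^\times$. To identify $\xi$ with a rational section of $\mathcal{O}_Y(D)$, I would use the standard fact $E = \mathrm{Spec}_Y \mathrm{Sym}^\bullet \mathcal{O}_Y(D)$, so the linear-in-fiber functions on $E$ are precisely the global sections of $\mathcal{O}_Y(D)$; allowing rational sections gives the $T_E$-eigenfunctions on $E$ of weight $\xi$, and such a section with divisor $D$ can be chosen by trivializing $\mathcal{O}_Y(D)$ on the open $T_Y$-orbit.

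Next, I would classify the $T_E$-invariant prime Weil divisors. Any $T_E$-orbit closure of codimension one either projects dominantly to $Y$ or lies in a fiber. In the first case its image is a $T_Y$-invariant divisor on $Y$, and the total preimage $p^{-1}(\rho)$ is itself $T_E$-invariant and irreducible (since the fibers of $p$ over a $T_Y$-orbit are irreducible), so these account for $\{p^{-1}(\rho) : \rho \in R_Y\}$. In the second case the divisor must be the zero section $\sigma_0(Y)$ (the unique $T_E$-invariant section). This gives the decomposition $\mathbb{Z}^{R_E} = \mathbb{Z}^{R_Y} \oplus \mathbb{Z}\sigma_0(Y)$.

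Finally, I would compute $\mathrm{div}_E$ on the basis $M_Y \oplus \mathbb{Z}\xi$. For $m \in M_Y$, the character extends from $T_Y$ to $T_E$ by pullback along $p$, so its divisor on $E$ is $p^*\mathrm{div}_Y(m)$; this has coefficient zero on $\sigma_0(Y)$, giving the top row of (\ref{preAformula}). For $\xi$, viewed as a rational section of $\mathcal{O}_Y(D)$, the induced fiber-linear function on $E$ has divisor equal to $p^*D$ plus one copy of $\sigma_0(Y)$ (the latter coming from the linearity in the fiber coordinate, which vanishes to order one along $\sigma_0(Y)$); reading off the coefficients gives the second column $(D, \sigma_0(Y))^T$. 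The main subtlety I expect is getting this last computation correctly aligned: one must justify carefully that the $\xi$ chosen to represent a section of $\mathcal{O}_Y(D)$ picks up exactly one copy of the zero section and that the horizontal divisor is $D$ rather than some other representative, which can be done by computing locally in a $T_Y$-equivariant trivialization over each maximal affine chart of $Y$.
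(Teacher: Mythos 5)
Your proposal is correct and follows essentially the same route as the paper, whose own proof is just the two-sentence observation that $\xi$, being a (fiber-linear) section, vanishes along $p^*D$ and to order one along the zero section, while characters of $M_Y$ pull back with their old divisors; your version fills in the identification $T_E = T_Y \times \Bbb{C}^\times$ and the enumeration of $T_E$-invariant divisors that the paper leaves implicit. One small slip worth fixing: in your dichotomy for invariant prime divisors the two cases are labeled backwards --- the zero section $\sigma_0(Y)$ is the one that dominates $Y$, while the divisors $p^{-1}(\rho)$ lie over proper $T_Y$-invariant divisors of $Y$ (not ``in a fiber'') --- but the conclusion and the divisor computation are unaffected.
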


\begin{proof}
The function $\xi$ is a section of a line bundle, so it
vanishes both over $D$ when $\xi \equiv 0$ and along
the zero section $\sigma_0(Y)$.  Other characters
are pullbacks, so they vanish as they did on $Y$.
\end{proof}

\begin{theorem}
\label{propertyF}
Assume  $Y = X(\mathrm{div}_Y, a)$, and
$E = \mathrm{Tot} \ \Gamma(Y, \mathcal{O}_Y(-D_1) \oplus \cdots \oplus \mathcal{O}_Y(-D_c))$,
and the 
$D_j$ are $T$-invariant Cartier divisors with $|D_j|$ basepoint free,
then $E = X(\mathrm{div}_E, p^* a)$.  Where $p \colon E \rightarrow Y$ is the projection.
\end{theorem}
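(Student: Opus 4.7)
The plan is to proceed by induction on the rank $c$, reducing to the case of a single line bundle, then verifying the base case by comparing the polyhedron $P_{\alpha_E}$ with the known fan $\Sigma_E$ of the line-bundle total space. For the inductive step, set $E_1 = \mathrm{Tot}(\mathcal{O}_Y(-D_1))$ with projection $\pi_1 \colon E_1 \to Y$. Then
\[
E = \mathrm{Tot}_{E_1}\bigl(\pi_1^* \mathcal{O}_Y(-D_2) \oplus \cdots \oplus \pi_1^* \mathcal{O}_Y(-D_c)\bigr),
\]
and each $\pi_1^* D_j$ remains $T$-invariant, Cartier, and basepoint-free, the last because pullback preserves global generation. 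Granting the rank-one case, $E_1 = X(\mathrm{div}_{E_1}, \pi_1^* a)$, and the inductive hypothesis applied with base $E_1$ and divisors $\pi_1^* D_2, \ldots, \pi_1^* D_c$ yields $E = X(\mathrm{div}_E, p^* a)$.

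For the base case $c = 1$, Lemma \ref{A-formula} gives the block decomposition (\ref{preAformula}) of $\mathrm{div}_E$. Fix $\alpha \in (\mathbb{Z}^{R_Y})_{\mathbb{R}}$ lifting $a$ and set $\alpha_E := (\alpha, 0) \in (\mathbb{Z}^{R_E})_{\mathbb{R}}$, with the second component indexed by the new divisor $\sigma_0(Y)$. Because $\sigma_0(Y)$ does not lie in the image of $\mathrm{div}_Y$ and its coefficient in $\alpha_E$ is zero, the class $[\alpha_E]$ in $\mathrm{coker}(\mathrm{div}_E)_{\mathbb{R}}$ equals $p^* a$. The polyhedron takes the explicit form
\[
P_{\alpha_E} = \{ (\xi, t) \in M_Y \otimes \mathbb{R} \oplus \mathbb{R} : \mathrm{div}_Y(\xi) + t D + \alpha \geq 0, \, t \geq 0 \}.
\]
Basepoint-freeness of $D$ supplies $\xi_D \in M_Y \otimes \mathbb{R}$ with $\mathrm{div}_Y(\xi_D) + D \geq 0$, and combining this with an interior point $\xi_0$ of $P_\alpha$ (guaranteed by $Y = X(\mathrm{div}_Y, a)$) shows that $(\xi_0 + \xi_D, 1)$ is an interior point of $P_{\alpha_E}$.

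It remains to identify the inward normal fan of $P_{\alpha_E}$ with $\Sigma_E$. The inward normals to the facets are $(v_\rho, D_\rho) \in N_E$ for $\rho \in R_Y$ together with $(0, 1)$ for $t \geq 0$; these are exactly the ray generators of $\Sigma_E$ obtained from the standard toric description of a line-bundle total space. The main obstacle is matching the higher-dimensional cones. This amounts to showing that a subset $S \subseteq R_Y$ of constraints is simultaneously tight at some point of $P_{\alpha_E}$ if and only if $S \subseteq \sigma(1)$ for some cone $\sigma \in \Sigma_Y$: for points with $t = 0$ this reduces to the normal fan of $P_\alpha$, while for $t > 0$ it is precisely basepoint-freeness that produces vertices $\xi_\sigma^D$ of $P_D$ so that $(\xi_\sigma + t \xi_\sigma^D, t)$ realizes the face indexed by $\sigma(1)$. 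Consequently every cone of $\Sigma_E$ appears as the inward normal cone at a face of $P_{\alpha_E}$, completing the identification $E = X(\mathrm{div}_E, p^* a)$.
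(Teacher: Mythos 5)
Your proposal is correct and follows essentially the same route as the paper: induction on the rank $c$ via pullback of basepoint-free systems, then in the rank-one case an explicit description of $P_{(\alpha,0)}$ whose slices at fixed fiber coordinate are $P_{\alpha+tD}$, with basepoint-freeness of $D$ guaranteeing these slices retain the face structure of $P_\alpha$ so that the inward normal fan is the fan of $E$. The paper phrases the key step as ``very ample plus basepoint-free is very ample,'' while you realize the faces directly via the Cartier data $\xi_\sigma^D$ of $P_D$, but this is the same mechanism.
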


\begin{proof}
The pullback of a base-point free linear system
is base-point free, so by induction, it suffices to check
this for when $c=1$.

In this case $E = \mathrm{Tot}(\mathcal{O}_Y(-D))$.
The pullback map on divisors sending $\rho \mapsto p^{-1}(\rho)$
induces the pullback map $p^* \colon A_{n-1}(Y) \rightarrow A_{n}(E)$,
and so if $a = [\alpha]$ then $p^*a = [(\alpha,0)]$.

The polyhedral set $P_{(\alpha, 0)}$ is defined by the inequality
$\mathrm{div}_E(\xi \oplus s \xi) + (\alpha, 0)= (\mathrm{div}_Y(m)+ s D +\alpha) 
\oplus s  \sigma_0(Y) \geq 0$.
Immediate observations we make are  $s \geq 0$ and $P_{(\alpha,0)} \cap \{ s = s_0 \}
= P_{\alpha + s_0 D} \oplus s_0$. The second observation becomes important
in light of the fact that $|D|$ base-point
free implies that $|s_0 D|$ is base-point free for $s_0 \in \Bbb{Z}_{>0}$,
and very-ample plus base-point free implies very-ample.  So $P_{\alpha + s_0 D}$
has the same face structure as $P_\alpha$ and consequently the polytope
$P_{p^*a}$ has the face structure of $P_\alpha \times [0,\infty)$.

The 1-cones in the fan of $E$ and $X(\mathrm{div}_E,p^*a)$
are the same, and the agreement of face structures implies $X(\mathrm{div}_E,p^*a)$
is covered by charts centered at torus fixed points of $Y$ just like $E$.
This implies the $n$-cones agree as well, and thus the full fans.
\end{proof}

\subsection*{Linear data associated to rational functions on toric varieties}
In section \ref{definitions} we introduced 
the homomorphism $T \rightarrow (\Bbb{C}^\times)^\Xi$,
given by $t \mapsto \xi_1(t) \xi_1 + \cdots + \xi_s(t) \xi_s$.
This was used to write the space of rational functions on a toric 
variety with terms $\Xi = \{ \xi_1, \cdots, \xi_s \}$
as a quotient $(\Bbb{C}^\times)^\Xi / T$.
 
The homomorphism induces a map between Lie algebras
that is integral with respect to the 
kernels of the exponential maps.  
The kernel of the exponential map on $T$
is typically denoted $N$, and is naturally identified
with the dual space to the characters, $\mathrm{Hom}_\Bbb{Z}(M, \Bbb{Z})$,
and the one parameter subgroups $\Bbb{C} \rightarrow T$ (see for instance
\cite{toricFulton}).
If we write $\Bbb{Z}^\Xi$ for the kernel of the exponental map 
on $(\Bbb{C}^\times)^\Xi$, the homomorphism between these lattices is 
\begin{equation}
\mathrm{mon} \colon N \rightarrow \mathbb{Z}^{\Xi},
\end{equation}
where $\mathrm{mon}$ is short for ``infinitesimal action on monomials''.

The isomorphism $\exp( 2 \pi i - ) \colon \Bbb{C}/\Bbb{Z} \rightarrow \Bbb{C}^\times$ 
induces an isomorphism between the exact sequences
\begin{equation}
N_{\Bbb{C}/\Bbb{Z}} 
\stackrel{\mathrm{mon}}{\rightarrow} 
(\Bbb{Z}^\Xi)_{\Bbb{C}/\Bbb{Z}} \rightarrow 
\mathrm{coker(mon)}_{\Bbb{C}/\Bbb{Z}} 
\rightarrow 0
\end{equation}
and 
\begin{equation}
T
\rightarrow
(\Bbb{C}^\times)^\Xi
\rightarrow 
(\Bbb{C}^\times)^\Xi /T
\rightarrow 1 .
\end{equation}
This uses the standard identification of $T$ with $N_{\Bbb{C}^\times}$.

\begin{definition}
Let $W$ be a rational function on a toric variety, $X$, with terms $\Xi$.
Denote by  $L \in \mathrm{coker(mon)}_{\Bbb{C}/\Bbb{Z}}$ 
the point corresponding to $W$.
 We define the linear $\Bbb{C}/\Bbb{Z}$-data of $W$
to be the pair $(\mathrm{mon}, L)$.
\end{definition}

\begin{lemma}
Let $\lambda \in \Bbb{C}^\Xi$ such that $\lambda$ maps to $L$ in 
$\mathrm{coker(mon)}_{\Bbb{C}/\Bbb{Z}}$. Let 
$W(\mathrm{mon}, \lambda)$ be the function on $T$ determined by $\mathrm{mon}$ and $\lambda$,
then there is a unique $x \in X$ such that $\iota^* W = W(\mathrm{mon},\lambda)$.
\end{lemma}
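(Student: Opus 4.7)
The plan is to recast the statement as a question about the $T$-action on $(\Bbb{C}^\times)^\Xi$, then read off existence and uniqueness from the isomorphism between the two exact sequences set up just before the lemma.

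First I fix any basepoint $x_0$ in the open torus orbit of $X$ and set $W_0 := \iota_{x_0}^* W \in (\Bbb{C}^\times)^\Xi$. By~(\ref{pullback_x'}) together with the action~(\ref{T_acts_on_function}), the point $x = t \cdot x_0$ satisfies $\iota_x^* W = t \cdot W_0$. Since $T$ acts freely and transitively on its own orbit $\iota_{x_0}(T) \subset X$, the assignment $x \mapsto t$ is a bijection, and the lemma reduces to finding a unique $t \in T$ with $t \cdot W_0 = W(\mathrm{mon}, \lambda)$.

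For existence, I invoke the commutative ladder between
\[
N_{\Bbb{C}/\Bbb{Z}} \xrightarrow{\mathrm{mon}} (\Bbb{Z}^\Xi)_{\Bbb{C}/\Bbb{Z}} \to \mathrm{coker}(\mathrm{mon})_{\Bbb{C}/\Bbb{Z}} \to 0
\]
and
\[
T \to (\Bbb{C}^\times)^\Xi \to (\Bbb{C}^\times)^\Xi / T \to 1.
\]
By the definition of the linear data, $W_0$ projects to $L$ in $(\Bbb{C}^\times)^\Xi / T$; by the hypothesis on $\lambda$, so does $W(\mathrm{mon}, \lambda) = \sum_j \exp(2 \pi i \lambda_j)\,\xi_j$. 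Hence the two elements lie in the same $T$-orbit in $(\Bbb{C}^\times)^\Xi$, and a suitable $t$ exists.

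For uniqueness, any two such $t_1, t_2$ give $t_1 t_2^{-1} \in \mathrm{Stab}_T(W_0) = \bigcap_j \ker(\xi_j \colon T \to \Bbb{C}^\times)$, the kernel of the algebraic group homomorphism $T \to (\Bbb{C}^\times)^\Xi$; this subgroup of $T$ has character group $M / \langle \Xi \rangle$, so it is trivial precisely when the terms of $W$ generate $M$. The main delicate point is exactly this non-degeneracy condition, which I would record as a standing hypothesis of the lemma (it is the condition under which $(\mathrm{mon}, L)$ can recover $W$ on the nose) and under which uniqueness follows immediately.
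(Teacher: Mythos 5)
Your argument is correct and is exactly the unpacking the paper has in mind: its entire proof is the one line ``this is an immediate consequence of the definition of the homomorphism $T \rightarrow (\Bbb{C}^\times)^\Xi$,'' which amounts to your existence step via the ladder of exact sequences. The substantive difference is your uniqueness caveat, and you are right to flag it: the stabilizer of $W_0$ under the action (\ref{T_acts_on_function}) is $\bigcap_j \ker(\xi_j) \cong \mathrm{Hom}_{\Bbb{Z}}(M/\langle \Xi \rangle, \Bbb{C}^\times)$, which is trivial if and only if the terms of $W$ generate $M$; e.g.\ for $X = \Bbb{C}^2$ and $W = x$ every point of the form $(x_0, t)$ gives the same pullback, so the lemma as printed is false without some such nondegeneracy hypothesis (equivalently, injectivity of $T \rightarrow (\Bbb{C}^\times)^\Xi$, i.e.\ surjectivity of $\mathrm{mon}^\tau$). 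The paper's proof silently elides this; in its intended applications (corollary \ref{B-block}, where $\Xi_W$ contains the $\sigma_j$ together with $(P^\times_{D_j}\cap M_Y)\times\{\sigma_j\}$ for a generic section) the condition does hold, but it is an honest hypothesis, not a consequence of the setup, and your decision to record it explicitly is the right call.
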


\begin{proof}
This is an immediate consequence of the definition of $\mathrm{mon}$,
or equivalently the definition of the homomorphism 
$T \rightarrow (\Bbb{C}^\times)^\Xi$.
\end{proof}

It is worth pointing out that $W(\mathrm{mon},\lambda) 
= \sum_j \exp(2 \pi i \lambda_j) \xi_j$.

\begin{remark}
Despite the fact that the linear data of a rational function 
only determines the function up to the action of $T$, we
will refer to the ``rational function'' $W(\mathrm{mon}, L)$.
This is because the action of $T$ will be viewed as a coordinate
choice and not intrinsic to the situation.
\end{remark}

\section{Toric Landau-Ginzburg models}  
\label{section:toricLGmodels}

In the introduction,
a  Landau-Ginzburg model was a superconformal theory defined by K\"ahler manifold, 
$X$, equipped with
a holomorphic function $W \colon X \rightarrow \Bbb{C}$, where  $W$ is called
the superpotential.  It is common to add an additional piece of information
called the {\it B-field}.  This is simply a cohomology class 
$b \in \mathrm{H}^2(X; \Bbb{R}/\Bbb{Z})$.

We will use the term {\it toric Landau-Ginzburg model} to mean the following
information: 
\begin{enumerate}
\item a toric variety $X$,
\item an element $K \in A_{n-1}(X)_{\Bbb{C}/\Bbb{Z}}$, and
\item a regular function $W \colon X \rightarrow \Bbb{C}$.  
\end{enumerate}
The class $K = b + ia$ is thought of as the B-field, $b$, and
the K\"ahler class, $a$, packaged together into a single ``complexified
K\"ahler class''.

In the cases where corollary \ref{Chow_equals_cohomology} applies, 
this is the same as a Landau-Ginzburg model except that we have retained 
only the class of the K\"ahler form and forgotten the form itself.

\subsection*{Toric Landau-Ginzburg models defined by linear data}

Associated to a toric Landau-Ginzburg model is 
its {\it linear data}:
\begin{enumerate}
\item the linear $\Bbb{C}/\Bbb{Z}$-data $(\mathrm{div}, K)$, and 
\item the linear $\Bbb{C}/\Bbb{Z}$-data $(\mathrm{mon}, L)$,
\end{enumerate}

One can also start from two sets of linear $\Bbb{C}/\Bbb{Z}$-data,
$(A,K), (B,L)$, and, provided
the polyhedral set defined by  $(A, \Im(K))$ has non-empty interior,
define a toric variety $X(A,\Im(K))$ and rational function $W(B,L)$.
Here $\Im(K)$ denotes the imaginary part of $K$.
This is not quite a toric 
Landau-Ginzburg model, because $K$ may not define a complexified
K\"ahler class, or $W$ may not be regular.

\begin{definition}
To avoid problematic situations, we will define linear $\Bbb{R}-$data $(C,c)$
to be {\it kopasetic} if 
\begin{enumerate}
\item the polyhedral set defined by $(C, c)$ has non-empty interior, and 
\item there exists a surjection $k: \Bbb{Z}^r \rightarrow \Bbb{Z}^{R_{X(C, c)}}$
that sends standard generators to standard generators or zero, and the diagram
\begin{equation}
\label{Gmap}
\begin{diagram}
\node{M} \arrow{e,t}{C} \arrow{se,b}{\mathrm{div}_{X(C,c)}}
\node{\Bbb{Z}^r} \arrow{s,r}{k}\\
\node[2]{\quad \quad \Bbb{Z}^{R_{X(C,c)}}}
\end{diagram}
\end{equation}
commutes.
\end{enumerate}
\end{definition}
We will also denote by $k$ the induced map on the
cokernels.

Note that $k$ is essentially unique in the sense that it
is constructed, by eliminating unnecessary inequalities
from the family $A(\xi) + \gamma \geq 0$, where $\gamma$
is a lift of $c$.  Non-uniqueness arises if two of the
inequalities are identical, in which case we drop one
of them.

To address the regularity of $W(B,L)$ we first make the 
simple observation that a rational function $W$ is 
regular if and only if all its monomials are regular,
and a monomial $\xi \in M_X$ is regular if and only if
\begin{equation}
\label{regularCharacter}
\mathrm{div}(\xi) \geq 0 .
\end{equation}
One can easily use these facts to check the statement: 

\begin{lemma}
$W$ is regular
$\iff \mathrm{div} \circ \mathrm{mon}^\tau \geq 0$.
\end{lemma}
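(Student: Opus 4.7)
The plan is to unfold both sides of the biconditional in terms of the terms $\xi_j \in \Xi$ of $W$ and then invoke the two observations displayed immediately above the lemma statement, namely that a sum is regular iff every monomial is regular and that a monomial character $\xi$ is regular iff $\mathrm{div}(\xi) \geq 0$ (equation (\ref{regularCharacter})).

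First I would identify $\mathrm{mon}^\tau$ concretely. By definition $\mathrm{mon} \colon N \to \mathbb{Z}^\Xi$ is the lattice map induced by the homomorphism $T \to (\mathbb{C}^\times)^\Xi$ whose $j$-th component is the character $\xi_j$, so in coordinates $\mathrm{mon}(\nu) = (\xi_j(\nu))_j$ viewing each $\xi_j$ as an element of $\mathrm{Hom}_\mathbb{Z}(N,\mathbb{Z}) = M$. Dualizing, the transpose $\mathrm{mon}^\tau \colon \mathbb{Z}^\Xi \to M$ sends the standard basis vector $e_j$ to $\xi_j$. Post-composing with the character-to-divisor map (\ref{character_to_divisor_map}) then yields $(\mathrm{div} \circ \mathrm{mon}^\tau)(e_j) = \mathrm{div}(\xi_j) \in \mathbb{Z}^R$.

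Next I would interpret the condition $\mathrm{div} \circ \mathrm{mon}^\tau \geq 0$ entrywise (as is the convention for the analogous polyhedral inequalities throughout the paper): it reads precisely as $\mathrm{div}(\xi_j) \geq 0$ for every $j = 1, \ldots, s$. By (\ref{regularCharacter}) this is equivalent to the statement that every character $\xi_j$ occurring as a term of $W$ is a regular function on $X$. Finally, invoking the observation that a rational function is regular iff each of its monomials is regular delivers the desired equivalence.

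There is no serious obstacle; the only subtlety is to be careful with the coefficients. By the remark preceding this lemma, $W$ is really the class of $\sum_j q_j \xi_j$ modulo the $T$-action (\ref{T_acts_on_function}), which rescales each $q_j$ by a nonzero scalar. Since the supporting set of characters $\Xi$ is invariant under this action and regularity of a Laurent polynomial depends only on which characters appear with nonzero coefficient, both sides of the biconditional are well defined on $(\mathbb{C}^\times)^\Xi/T$, and the argument above goes through at the level of linear data.
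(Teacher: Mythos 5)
Your argument is correct and is exactly the one the paper intends: the paper itself offers no written proof beyond asserting that the lemma follows from the two displayed observations, and your identification of $\mathrm{mon}^\tau(e_j) = \xi_j$ (which the paper also records in a later lemma) plus the entrywise reading of the inequality is precisely how that check goes. The closing remark on invariance under the $T$-action is a sensible extra precaution but does not change the approach.
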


\begin{definition}
With this in mind we define a pair of linear $\Bbb{C}/\Bbb{Z}$-data,
$(A,K)$, and $(B,L)$, to be {\it kopasetic} if 
\begin{enumerate}
\item $(A,\Im(K))$ is kopasetic, and
\item $A \circ B^\tau \geq 0$.
\end{enumerate}
\end{definition}

\begin{definition}
Given a pair of kopasetic linear $\Bbb{C}/\Bbb{Z}$-data,
$(A,K)$, and $(B,L)$, we define a toric Landau-Ginzburg model
with
\begin{enumerate}
\item the toric variety $X(A, \Im(K))$,
\item the regular function $W(B,L)$, and
\item the complexified K\"ahler class $k(K)$.
\end{enumerate}
\end{definition}

\subsection*{The dual toric Landau-Ginzburg model}
So far we have seen how one can extract linear data from
a toric Landau-Ginzburg data, and under kopasetic conditions 
define a toric Landau-Ginzburg model from a pair of linear 
$\Bbb{C}/\Bbb{Z}$-data.  

On the level of linear data, there 
is a simple involution defined by simply interchanging
the roles of $(\mathrm{div}, K), (\mathrm{mon}, L)$.

\begin{definition}
Let $(A,K), (B,L)$ be a pair of linear $\Bbb{C}/\Bbb{Z}$-data.
Define the  {\it dual} pair $(A', K'), (B',L')$, by
\begin{equation}
(A', K') = (B,L), \ \mathrm{and}, \ (B',L') = (A, K).
\end{equation}
\end{definition}

\begin{definition}
Given a toric Landau-Ginzburg model, $(X, W, K)$, assume that the dual to its linear data
is kopasetic.  Define the {\it dual toric Landau-Ginzburg model} to be the
toric Landau-Ginzburg model defined by the dual linear data.  Denote the
dual by $(X', W', K')$.
\end{definition}

\begin{remark}
\label{kopasetic_test_remark}
To check that the dual data is kopasetic, one only needs to 
verify that $(A', \Im(K'))$ is kopasetic since 
$A' \circ B'^\tau  = (A \circ B^\tau)^\tau$,
and  $W$ is regular so $(A \circ B^\tau) \geq 0$.
\end{remark}

\smallskip

\section{The sigma model / Landau-Ginzburg model correspondence}
\label{correspondence}
\label{AandB}

In physics, a {\it sigma model} is a superconformal theory
defined by K\"ahler manifold $Z$ equipped
with a B-field $b \in \mathrm{H}^2(Z, \Bbb{R}/\Bbb{Z})$.
We will take a sigma model to be an algebraic variety with a choice 
of complexified K\"ahler class in its Chow group of divisors.  
This class will be a pullback from such a class on an ambient space.

One can produce interesting toric Landau-Ginzburg models 
from the sigma model / Landau-Ginzburg model correspondence applied to 
a complete intersection in a toric variety.  

As mentioned in the introduction, when a subvariety $Z$ of a variety $Y$
is the zero locus of a global section $\mathrm{w}$ of a vector bundle $\mathcal{V}$, 
we can define a Landau-Ginzburg model. Strictly speaking, this information
defines a morphism $W \colon X \rightarrow \Bbb{C}$ from the total space 
of the dual bundle $\mathcal{V}^\vee$, $X = \mathrm{Tot}(\mathcal{V}^\vee)$,
to $\Bbb{C}$.
The morphism is defined by the pairing 
$\mathcal{V} \otimes \mathcal{V}^\vee \rightarrow \mathcal{O}_X$ that identifies
sections of $\mathcal{V}$ with functions on $X$.

If $Y$ is equipped with complexifed divisor class $K \in A_{n-1}(Y)_{\Bbb{C}/\Bbb{Z}}$, 
then we can pull it back to define a complexified divisor class on $X$.
Also if $Y$ is toric and $\mathcal{V}$ is split, then it is easy to see that
$X$ is a toric variety.  So we make the following definition.

\begin{definition}
Let $\mathcal{V}$ be a rank $c$ split bundle over an $n$-dimensional toric variety $Y$.  
Let $\mathrm{w}$ be a global section of $\mathcal{V}$, and let 
$K \in A_{n-1}(Y)_{\Bbb{C}/\Bbb{Z}}$.  Finally let $Z$ be the zero locus of  
$\mathrm{w}$.  We define {\it toric Landau-Ginzburg model corresponding to $(Z,K)$} 
to be $(X, W, K)$, where $X = \mathrm{Tot}(\mathcal{V}^\vee)$, $W$ is the function defined 
by $\mathrm{w}$, and $K$ is the pullback to $A_{n+c-1}(X)$ of $K \in A_{n-1}(Y)$.
\end{definition}

Note that the resulting toric Landau-Ginzburg model actually depends on $Y$, 
$\mathrm{w} \in \Gamma(Y,\mathcal{V})$, and $K$,
rather than $(Z, K)$.
Also notice $A_{n+c-1}(X) =  A_{n-1}(Y)$.

\begin{remark}
As explained to us \cite{Sharpe:private}:
in the literature, when studying a sigma model $Z$
it is a common trick to move to a ``Landau-Ginzburg point''
of the gauged linear sigma model K\"ahler 
moduli space of $Z$ to obtain a Landau-Ginzburg model
whose $B$-twist is the same as that of $Z$.  
We are not using this Landau-Ginzburg model.  The Landau-Ginzburg
model $(X,W,K)$ and the sigma model $(Z,K)$ lie in the same 
universality class
and so truly define the same superconformal theory.
See \cite{SharpeGuffin:A-twistLG} for a detailed treatment. 
\end{remark}

\subsection*{Linear data associated to $(X,W,K)$}

An immediate consequence of lemma \ref{A-formula} is the following formula
for the map $\mathrm{div}_X$.

\begin{corollary}
\label{bigA-formula}
(of lemma \ref{A-formula})
If $D_1, \cdots, D_c$ are $T$-invariant Cartier divisors and $X$ is the total
space of the split bundle
$\mathcal{O}_Y(-D_1) \oplus \cdots \oplus \mathcal{O}_Y(-D_c)$ over a toric variety $Y$, then
the character group of $X$ is \begin{equation}M_X = M_Y \oplus \Bbb{Z} \sigma_1 \oplus \cdots
\oplus \Bbb{Z} \sigma_c, \end{equation}  where
$\sigma_j$ is a rational section of $\mathcal{O}_Y(D_j)$ whose
divisor is $D_j$.  The $T$-invariant Weil divisors of $X$
are the preimages under $p$ of the $T$-invariant Weil divisors
of $Y$ as well as the total spaces $X_j$ of the $c$ subbundles $\mathcal{V}^\vee_j$, where 
$\mathcal{V}^\vee_j$ 
is the dual bundle to
$\mathrm{ker}(\pi_j \colon  \mathcal{V} \rightarrow \mathcal{O}(D_j))$.  Furthermore,

\begin{equation}
\label{Lequation}
\mathrm{div}_X =
\left[
\begin{array}{ccccccc}
\mathrm{div}_Y & | & D_1 & | & \cdots & | & D_c\\
0 & | &  &  & \mathrm{Id} &  &
\end{array}
\right]
\end{equation}
with respect to the decomposition of $M_X$ above and
$\Bbb{Z}^{R_X} = \Bbb{Z}^{R_Y} \oplus \Bbb{Z} X_1 \oplus \cdots
\oplus \Bbb{Z} X_c$.
\end{corollary}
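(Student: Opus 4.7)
The plan is to proceed by induction on the rank $c$, with the base case $c=1$ being exactly Lemma \ref{A-formula}. For the inductive step, I realize $X$ as an iterated total space: letting
\[
Y_{c-1} := \mathrm{Tot}\bigl(\mathcal{O}_Y(-D_1) \oplus \cdots \oplus \mathcal{O}_Y(-D_{c-1})\bigr),
\]
with projection $q \colon Y_{c-1} \to Y$, one has $X = \mathrm{Tot}(q^*\mathcal{O}_Y(-D_c))$, the total space of a single line bundle over the toric variety $Y_{c-1}$. Applying Lemma \ref{A-formula} in this situation gives
\[
M_X = M_{Y_{c-1}} \oplus \mathbb{Z}\sigma_c,
\qquad
\mathrm{div}_X =
\left[
\begin{array}{ccc}
\mathrm{div}_{Y_{c-1}} & | & q^*D_c \\
0 & | & \sigma_0(Y_{c-1})
\end{array}
\right],
\]
where the new generator $\sigma_c$ is a rational section of $q^*\mathcal{O}_Y(D_c)$ whose divisor is $q^*D_c$, and the new zero-section row corresponds to the locus where the $c$-th fiber coordinate vanishes.

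Next I would unpack the inductive hypothesis, which decomposes $M_{Y_{c-1}} = M_Y \oplus \bigoplus_{j<c} \mathbb{Z}\sigma_j$ and presents $\mathrm{div}_{Y_{c-1}}$ in the claimed block form. Substitution yields the full matrix (\ref{Lequation}) after two observations: (i) the divisor $D_c$ is a $\mathbb{Z}$-linear combination of torus-invariant divisors of $Y$, so its pullback $q^*D_c$ has zero coefficients on the zero-section divisors of $Y_{c-1} \to Y$ introduced at previous stages, placing its nonzero entries only in the $M_Y$-row; and (ii) the locus where the $c$-th fiber coordinate of $X$ vanishes is canonically the total space over $Y$ of the subbundle $\bigoplus_{i \ne c} \mathcal{O}_Y(-D_i)$, which is exactly $X_c = \mathrm{Tot}(\mathcal{V}_c^\vee)$ in the notation of the statement, since $\ker(\pi_c)^\vee = \bigoplus_{i \neq c} \mathcal{O}_Y(-D_i)$.

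To close the induction, I would check that the zero-section divisors labeled $X_1, \ldots, X_{c-1}$ in $Y_{c-1}$ pull back under $X \to Y_{c-1}$ to the divisors $X_1, \ldots, X_{c-1}$ of $X$ described in the statement; this is immediate because in each case the divisor is defined as the vanishing locus of a single fiber coordinate, which is preserved under the final tower step. The rest is bookkeeping: the resulting block matrix agrees with (\ref{Lequation}) upon reindexing. The only nontrivial point is observation (i) above, namely that pulling a Cartier divisor back from $Y$ introduces no contribution along the new fiber-direction invariant divisors — but this is essentially the content of Lemma \ref{A-formula} applied at each step, so the argument is genuinely an iteration rather than a separate computation.
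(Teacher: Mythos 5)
Your proof is correct and is essentially the paper's own argument: the paper proves this corollary in one line, "obtained by repeated application of lemma \ref{A-formula}," which is precisely the induction you carry out. Your write-up simply makes explicit the bookkeeping (that $q^*D_c$ has no component along the previously introduced zero-section divisors, and that the new zero section is $X_c = \mathrm{Tot}(\ker(\pi_c)^\vee)$) that the paper leaves implicit.
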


\begin{proof}
This formula is obtained by repeated application of lemma \ref{A-formula}.
\end{proof}

The linear data corresponding to the superpotential $W$ is easily obtained via the
following lemma and the standard practice of identification of $T$-linearized global
sections of $\mathcal{O}_Y(D)$ with integral points of the polytope $P_D$.

Rather than $\mathrm{mon} \colon N \rightarrow \Bbb{Z}^\Xi$,
it is easier to write down the transpose, $\mathrm{mon}^\tau$.
Using the usual identification of characters and covectors, 
\begin{equation}
\xi \leftrightarrow d\xi|_{Id} ,
\end{equation}
and of the transpose and the pullback, we get the lemma below.

\begin{lemma}
Let $X$ be a toric variety with character group $M$, and 
$W$ a rational function on $X$ with terms $\Xi$.
$\mathrm{mon}^\tau \colon  
\mathrm{Hom}_\mathbb{Z}(\mathbb{Z}^\Xi, \mathbb{Z}) \rightarrow M$, 
takes the basis element dual to $1\xi_j \in \mathbb{Z}^{\Xi}$  
to $\xi_j \in M$.
\end{lemma}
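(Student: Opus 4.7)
The plan is a direct unpacking of the definition of $\mathrm{mon}$. First I would write the composition $N \to T \to (\mathbb{C}^\times)^\Xi$ at the level of one-parameter subgroups: given $\lambda \in N$, viewed as a one-parameter subgroup $z \mapsto \lambda(z)$ of $T$, the composition with $t \mapsto \sum_j \xi_j(t)\,\xi_j$ produces the one-parameter subgroup of $(\mathbb{C}^\times)^\Xi$ whose $j$-th coordinate is $z \mapsto \xi_j(\lambda(z)) = z^{\langle \xi_j,\lambda\rangle}$. Differentiating (i.e.\ passing to the kernel of the exponential), this gives
\begin{equation*}
\mathrm{mon}(\lambda) \;=\; \sum_{j=1}^{s} \langle \xi_j, \lambda\rangle \, (1\xi_j) \;\in\; \Bbb{Z}^{\Xi}.
\end{equation*}

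Next I would apply the definition of the transpose. Let $e_j^{*} \in \mathrm{Hom}_{\Bbb{Z}}(\Bbb{Z}^\Xi, \Bbb{Z})$ denote the basis element dual to $1\xi_j$, so that $\langle e_j^{*}, 1\xi_k \rangle = \delta_{jk}$. For any $\lambda \in N$,
\begin{equation*}
\langle \mathrm{mon}^\tau(e_j^{*}), \lambda \rangle \;=\; \langle e_j^{*}, \mathrm{mon}(\lambda)\rangle \;=\; \sum_{k} \langle \xi_k,\lambda\rangle\, \delta_{jk} \;=\; \langle \xi_j, \lambda \rangle.
\end{equation*}
Under the canonical identification $M = \mathrm{Hom}_{\Bbb{Z}}(N, \Bbb{Z})$ (the one used throughout the section, coming from $\xi \leftrightarrow d\xi|_{\mathrm{Id}}$), this says precisely that $\mathrm{mon}^\tau(e_j^{*}) = \xi_j$, which is the claim.

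There is essentially no obstacle here beyond bookkeeping: the content is contained in the definition of $\mathrm{mon}$ as the derivative at the identity of $T \to (\Bbb{C}^\times)^\Xi$, $t \mapsto \sum_j \xi_j(t)\,\xi_j$, together with the fact that dualizing a homomorphism of free abelian groups coincides with the pullback on cotangent spaces at the identity after the usual identifications. The only thing one must take care with is consistency of the pairing conventions between $M$ and $N$ and between $\Bbb{Z}^\Xi$ and its dual, which is the reason the hint preceding the lemma emphasizes the identifications $\xi \leftrightarrow d\xi|_{\mathrm{Id}}$ and ``transpose $=$ pullback.''
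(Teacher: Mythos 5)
Your proof is correct and follows the same route the paper takes: the paper dispenses with this lemma by invoking the identifications $\xi \leftrightarrow d\xi|_{\mathrm{Id}}$ and transpose $=$ pullback, and you have simply written out that computation explicitly (the formula $\mathrm{mon}(\lambda) = \sum_j \langle \xi_j,\lambda\rangle\,(1\xi_j)$ followed by dualizing). No gaps; your version is just a more detailed rendering of the argument the paper leaves implicit.
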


\begin{definition}
The set of terms of $W$, $\Xi_W$, is naturally identified with a subset of 
$\coprod_j (P_{D_j} \cap M_Y)$. Where $P_{D_j}$ is the polytope corresponding
to $D_j \in \Bbb{Z}^{R_Y}$.
If $\Xi_W$ is in bijection with this set, we say $\mathrm{w}$ is {\it generic}.  
\end{definition}

\begin{lemma}
\label{B-formula}
In the generic case the transpose map,
$\mathrm{mon}^\tau$, is given by the matrix
\begin{equation}
\left[
\begin{array}{ccccc}
P_{D_1} \cap M_Y & | & \cdots & | &  P_{D_c} \cap M_Y \\
\xi_1  & | & \cdots & | & \xi_c
\end{array}
\right]
\end{equation}
with respect to the decomposition of $M_X$ above and
the identification $\mathrm{Hom}_\Bbb{Z}(\Bbb{Z}^{\Xi_W},\Bbb{Z}) =
\mathrm{Hom}_\Bbb{Z}(\Bbb{Z}^{(P_{D_1} \cap M)}, \Bbb{Z}) \oplus \cdots
\oplus \mathrm{Hom}_\Bbb{Z}(\Bbb{Z}^{(P_{D_c} \cap M)}, \Bbb{Z})$.
\end{lemma}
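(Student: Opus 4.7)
The plan is to expand $\mathrm{w}$ in a $T$-linearized basis of $\Gamma(Y, \mathcal{V})$, translate this into the monomial expansion of $W$ via the duality pairing that defines $W$, and then read off each monomial's character of $T_X$ in the decomposition $M_X = M_Y \oplus \bigoplus_j \Bbb{Z}\sigma_j$ provided by Corollary~\ref{bigA-formula}. Once this is done, the preceding lemma identifying $\mathrm{mon}^\tau$ of a dual basis element with the corresponding character in $M_X$ finishes the job.

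First, I would note that since $\mathcal{V}^\vee = \bigoplus_j \mathcal{O}_Y(-D_j)$, we have $\mathcal{V} = \bigoplus_j \mathcal{O}_Y(D_j)$, and so a $T$-linearized basis of $\Gamma(Y,\mathcal{V})$ is given by the sections $\mu \cdot \sigma_j$, with $\mu$ ranging over $P_{D_j}\cap M_Y$ and $\sigma_j$ the rational section of $\mathcal{O}_Y(D_j)$ with divisor $D_j$ supplied by Corollary~\ref{bigA-formula}. Genericity of $\mathrm{w}$ is exactly the statement that each such basis vector occurs with nonzero coefficient, so $\Xi_W$ is in bijection with $\coprod_j (P_{D_j}\cap M_Y)$ as in the definition preceding the lemma.

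Next, I would work out the character of $T_X$ represented by each term. Under the pairing $\mathcal{V}\otimes\mathcal{V}^\vee \to \mathcal{O}_X$ that defines $W$, the basis section $\mu\cdot \sigma_j$ in the $j$-th summand of $\mathcal{V}$ pairs against the tautological fiber coordinate of the $j$-th summand of $\mathcal{V}^\vee$ and vanishes against the other summands. The resulting regular function on $X$ restricts on the open torus orbit to the character $\mu + \sigma_j \in M_X$ (written additively in the decomposition $M_Y \oplus \bigoplus_j \Bbb{Z}\sigma_j$): the $T_Y$-weight $\mu$ comes from $\mu\cdot\sigma_j$ as a section on $Y$, and the fiber weight is the standard basis vector $\sigma_j$, because the $j$-th fiber coordinate of $\mathcal{V}^\vee$ is precisely the character $\sigma_j$ by the lower block of equation (\ref{Lequation}).

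Applying the preceding lemma, $\mathrm{mon}^\tau$ sends the basis element of $\mathrm{Hom}_\Bbb{Z}(\Bbb{Z}^{\Xi_W},\Bbb{Z})$ dual to the term labeled by $(\mu,j)$ to the character $\mu + \sigma_j$ in $M_X$; assembling these columns and grouping them by the summand index $j$ reproduces exactly the block matrix in the statement, with $P_{D_j}\cap M_Y$ occupying the upper $M_Y$-rows and the standard basis vector $\sigma_j$ occupying the $j$-th fiber row. The only real point requiring care is the weight computation in the third step, namely verifying that the $j$-th fiber coordinate on $X = \mathrm{Tot}(\mathcal{V}^\vee)$ carries $T_X$-weight $+\sigma_j$ rather than some sign-flipped or twisted version; once the compatibility between the $T$-equivariant total-space construction and the duality pairing used to build $W$ is pinned down, this matches the conventions fixed by Corollary~\ref{bigA-formula} and the formula follows.
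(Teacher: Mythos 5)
Your proof is correct and follows essentially the same route as the paper's: the paper likewise combines the standard identification of $T$-linearized sections of $\mathcal{O}_Y(D_j)$ with lattice points of $P_{D_j}$ and the preceding lemma stating that $\mathrm{mon}^\tau$ sends the basis element dual to a term to that term's character, summarizing this in one line as ``the transpose simply picks out the appropriate monomial.'' Your explicit check that the term indexed by $(\mu,j)$ carries the character $\mu+\sigma_j$ in the decomposition $M_X = M_Y \oplus \bigoplus_j \Bbb{Z}\sigma_j$ is the only detail the paper leaves implicit.
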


\begin{proof}
A one parameter subgroup acts on a monomial coefficient by multiplication
of the coefficient by the subgroup plugged into the monomial itself.
Therefore the transpose simply picks out the appropriate monomial.
\end{proof}

\begin{corollary}
\label{B-block}
If we write $P^\times_{D_j}$ for $P_{D_j} \setminus  \{ 0 \}$ and $0_j$ for the $j^{th}$ zero in
$\coprod_j (P_{D_j} \cap M_Y)$, the matrix $\mathrm{mon}^\tau$ takes the form
\begin{equation}
\left[
\begin{array}{ccccccc}
P^\times_{D_1} \cap M_Y & | & \cdots & | &  P^\times_{D_c} \cap M_Y & | & 0 \\
\sigma_1  & | & \cdots & | & \sigma_c  & | &  \mathrm{Id}
\end{array}
\right],
\end{equation}
where now $\mathrm{Hom}_\Bbb{Z}(\Bbb{Z}^{\Xi_W},\Bbb{Z}) =$
$$\mathrm{Hom}_\Bbb{Z}(\Bbb{Z}^{(P^\times_{D_1} \cap M)},\Bbb{Z}) \oplus \cdots
\oplus \mathrm{Hom}(\Bbb{Z}^{(P^\times_{D_c} \cap M)}, \Bbb{Z}) \oplus
\mathrm{Hom}_\Bbb{Z}(\Bbb{Z} 0_1, \Bbb{Z}) \oplus \cdots \oplus \mathrm{Hom}_\Bbb{Z}(\Bbb{Z} 0_c, \Bbb{Z}) .$$
\end{corollary}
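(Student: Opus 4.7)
The statement is essentially a rewriting of Lemma \ref{B-formula} obtained by separating the columns corresponding to the zero element of each polytope $P_{D_j} \cap M_Y$ from the rest. The plan is to take the matrix from that lemma, identify the zero columns explicitly, and then regroup.

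First I would recall from Lemma \ref{B-formula} that the columns of $\mathrm{mon}^\tau$ are indexed by $\Xi_W = \coprod_j (P_{D_j} \cap M_Y)$, and that under the decomposition $M_X = M_Y \oplus \Bbb{Z}\sigma_1 \oplus \cdots \oplus \Bbb{Z}\sigma_c$ from Corollary \ref{bigA-formula}, the column corresponding to $\xi \in P_{D_j} \cap M_Y$ has top entry $\xi \in M_Y$ and bottom entry equal to the $j$-th standard basis vector $\sigma_j$ of the second summand.

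Next I would examine the distinguished columns indexed by $0_j \in P_{D_j} \cap M_Y$: each such column has top entry $0 \in M_Y$ and bottom entry $\sigma_j$, so collecting these $c$ columns together produces exactly the identity map on $\bigoplus_j \Bbb{Z}\sigma_j$. Placing this block to the right of the columns indexed by the nonzero elements $P^\times_{D_j} \cap M_Y$ yields the desired form: the leftmost blocks retain the structure of Lemma \ref{B-formula} but with the zero element removed, and the rightmost block is the $c \times c$ identity on the $\sigma$-summand. The domain decomposes accordingly as the direct sum stated in the corollary, where each $\mathrm{Hom}_\Bbb{Z}(\Bbb{Z}^{(P_{D_j} \cap M)}, \Bbb{Z})$ splits as $\mathrm{Hom}_\Bbb{Z}(\Bbb{Z}^{(P^\times_{D_j} \cap M)}, \Bbb{Z}) \oplus \mathrm{Hom}_\Bbb{Z}(\Bbb{Z} 0_j, \Bbb{Z})$.

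No substantive obstacle arises; the argument is a bookkeeping reorganization of the formula from Lemma \ref{B-formula}, with the only point to verify being the explicit form of the zero-indexed columns, which is immediate from the construction of $\mathrm{mon}^\tau$.
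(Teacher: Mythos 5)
Your proposal is correct and matches the paper's own (one-line) proof: both observe that the corollary is just Lemma \ref{B-formula} with the columns permuted, the zero-indexed columns assembling into the identity block on the $\sigma$-summand. Your version simply spells out the bookkeeping that the paper leaves implicit.
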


\begin{proof}
This is just the matrix of lemma \ref{B-formula} with the columns permuted.
\end{proof}

\begin{definition}
Following the decompositions above,  we write $\Xi^\times := \coprod_j(P^\times_{D_j} \cap M_Y)$, and so
 $\Bbb{Z}^{\Xi_W}  = \Bbb{Z}^{\Xi^\times}
\oplus \Bbb{Z} 0_1 \oplus \cdots \oplus \Bbb{Z} 0_c$.
\end{definition}

\begin{definition}
$N_X$ also has a natural decomposition in terms of the one parameter subgroups associated to
$Y$ and the summands of $\mathcal{V}^\vee$ : \begin{equation}
\label{N-decomp}
N_X = N_Y \oplus \Bbb{Z} \phi_1 \oplus \cdots
\oplus \Bbb{Z} \phi_c. \end{equation} Here
$\phi_j$ is just multiplication by $\Bbb{C}^\times$ along the $j^{\mathrm{th}}$
summand of $\mathcal{V}^\vee$ and the elements of $N_Y$
are constant in the fiber direction.  
\end{definition}

\begin{remark}
In this decomposition
$\Bbb{Z} \sigma_1 \oplus \cdots \oplus \Bbb{Z} \sigma_c \subset N_Y^\perp$, and
thus the pairing between $M_X$ and $N_X$ is the sum of the pairing for $Y$ and pairing
between $\Bbb{Z} \sigma_1 \oplus \cdots \oplus \Bbb{Z} \sigma_c$ and
$\Bbb{Z} \phi_1 \oplus \cdots \oplus \Bbb{Z} \phi_c$ where $\langle \sigma_i, \phi_j  \rangle =
\delta(i-j)$.
\end{remark}

\section{Toric Landau-Ginzburg models dual to sigma models: existance}

Corollaries \ref{bigA-formula} and \ref{B-block}, give a very concrete picture
of the linear data of a toric Landau-Ginzburg model 
$(X,W,K)$ obtained from a complete 
intersection $Z$ determined by a global section $\mathrm{w}$ of a bundle $\mathcal{V}$ over 
a toric variety $Y$.  

In this  section, we show the dual data, $(A',K'),$ and $(B',L')$
is kopasetic provided $(A', \Im(K'))$ 
is kopasetic, as mentioned in remark \ref{kopasetic_test_remark}.  This involves checking two things
\begin{enumerate}
\item the polytope defined by $(A', \Im(K'))$ has non-empty interior, and
\item there exists a surjection $k$ as in equation (\ref{Gmap}).
\end{enumerate}

\begin{lemma}
Assuming the codimension, $c$, of $Z$  is positive, $(A', \Im(K'))$ is 
kopasetic for all values of $\Im(K')$.
\end{lemma}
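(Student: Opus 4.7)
The plan is to verify the two conditions in the definition of kopasetic data for $(A', \Im(K')) = (\mathrm{mon}, \Im(L))$: that the associated polyhedron in $N_{X,\Bbb{R}}$ has non-empty interior, and that there is a surjection $k \colon \Bbb{Z}^{\Xi_W} \to \Bbb{Z}^{R_{X'}}$ making the analog of diagram (\ref{Gmap}) commute. Throughout I will use the explicit form of $\mathrm{mon}^\tau$ from Corollary \ref{B-block} together with the dual decomposition $N_X = N_Y \oplus \Bbb{Z}\phi_1 \oplus \cdots \oplus \Bbb{Z}\phi_c$, under which $\langle \sigma_i, \phi_j\rangle = \delta_{ij}$.

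For the non-empty interior, fix any lift $\lambda \in \Bbb{R}^{\Xi_W}$ of $\Im(L)$. Writing a candidate point as $\phi = (\nu, a_1, \ldots, a_c)$ with $\nu \in N_{Y,\Bbb{R}}$, the strict inequalities take the form $\nu(\xi) + a_j + \lambda_{(\xi,j)} > 0$ for every $\xi \in P^\times_{D_j} \cap M_Y$, together with $a_j + \lambda_{0_j} > 0$, corresponding to the two types of columns in Corollary \ref{B-block}. Setting $\nu = 0$ reduces the problem to finitely many lower bounds on each individual $a_j$, and since $c \geq 1$ provides a fiber coordinate for every summand, such $a_j$ exist and give an interior point.

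For the surjection, I will first observe that the rows of $\mathrm{mon}$ viewed as elements of $M_X$ are primitive and pairwise distinct. By Corollary \ref{B-block} each row is either $(\xi, \sigma_j)$ with $\xi \in P^\times_{D_j} \cap M_Y$ or $\sigma_j$ itself, and in both cases the coefficient of $\sigma_j$ is $1$, which forces primitivity in $M_X$ and distinguishes rows across different $j$-blocks; within a fixed $j$-block, rows are distinguished by their $M_Y$-components. Consequently the facets of the polyhedron are in bijection with a subset of $\Xi_W$, and the primitive inward normal to each facet equals its defining row of $\mathrm{mon}$ with no rescaling. I then define $k$ to project $\Bbb{Z}^{\Xi_W}$ onto the coordinates indexed by facet-defining terms and to send the remaining redundant generators to zero; this is manifestly a surjection taking standard generators to standard generators or zero.

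Commutativity of diagram (\ref{Gmap}) is then immediate: for $\phi \in N_X$ and a facet $\rho$ corresponding to the term $\xi_\rho$, both $k \circ \mathrm{mon}(\phi)$ and $\mathrm{div}_{X'}(\phi)$ have $\rho$-entry equal to $\xi_\rho(\phi)$. The main obstacle, and where $c > 0$ is used essentially, is the existence of an interior point: the fiber directions $a_j$ are precisely what allow us to absorb any lift $\lambda$ into a simultaneous shift, and without them the inequalities coming from the constant terms $0_j$ would have no variables to control and would fail for generic $\lambda$.
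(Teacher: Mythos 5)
Your proof is correct, and it reaches the conclusion by a slightly different route than the paper on the key step. The existence of the surjection $k$ is handled the same way in both arguments: every term of $W$ is of the form $(\mu,\sigma_j)$ with $\sigma_j$-coefficient equal to $1$, so the rows of $\mathrm{mon}$ are pairwise distinct and primitive, the primitive inward facet normals of $P_{\alpha'}$ are literally a subset of those rows, and $k$ is the projection killing the redundant coordinates. Where you diverge is the non-empty-interior step. The paper argues via convex geometry: since every normal pairs to $+1$ with $\sum_j \phi_j$, the normals lie in an open half-space, so the dual cone $C$ of their non-negative span is full-dimensional; $C$ is the recession cone of $P_{\alpha'}$, and translating far along an interior direction of $C$ produces an interior point for any lift $\alpha'$. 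You instead write down an interior point explicitly, setting the $N_Y$-component to zero and pushing each fiber coordinate $a_j$ past the finitely many lower bounds $-\lambda_{(\xi,j)}$ coming from the $j$-th block of Corollary \ref{B-block}. Both arguments rest on the same structural input --- the linearity of $W$ in the fiber directions, which is exactly where $c>0$ enters, as you correctly identify. Yours is more elementary and completely explicit; the paper's isolates a reusable convex-geometry statement (half-spaces whose normals do not positively span a half-space have non-empty intersection) and makes the recession cone visible, which is the object implicitly governing the later structural analysis of $X'$.
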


\begin{proof}
The existence of $k$ is immediate, since the columns of $\mathrm{mon}_W^\tau$
distinct an primitive.  Now let $\alpha'$ denote a lift of $\Im(K')$,
and consider the polyhedral set $P_{\alpha'}$.  The inward normals of 
facets of $P_{\alpha'}$ form a subset of the columns of $\mathrm{mon}_W^\tau$.  
These normals all live in the interior of the half-space of 
$N'_\Bbb{R} = (M_X)_\Bbb{R}$, 
$H = \{ \xi \in (M_X)_\Bbb{R} \ | \ \sum_j \phi_j(\xi) \geq 0\}$,
where the $\phi_j$ are as in equation (\ref{N-decomp}).  The lemma
now follows from the following statement: {\it The intersection
of affine half-spaces, such that the non-negative $\Bbb{R}$-span
of their normals does not contain a half-space, is non-empty.}

Define $C$ to be the cone dual to the cone defined by the 
non-negative span of the columns of  $\mathrm{mon}^\tau_W$.
$C$ has $0$ as an apex since its dual is contained
in a half-space, and more importantly it has a non-empty 
interior since its properly contained in a half-space.
If $p \in P_\alpha$ is is clear that 
$p + C \subset P_\alpha$ since the affine half-spaces
defining $P_\alpha$ can only be translates away from $p$.
On the other hand $P_\alpha$ can be obtained by intersection translates
of  half-planes containing $C$, so it is clear that 
$P_\alpha \neq \varnothing$.
\end{proof}

\begin{corollary}
A toric Landau-Ginzburg model 
$(X,W,K)$ obtained from a complete 
intersection $Z \neq Y$ determined by a 
global section $\mathrm{w}$ of a bundle $\mathcal{V}$ over 
a toric variety $Y$ has a dual 
toric Landau-Ginzburg model $(X',W',K')$.
\end{corollary}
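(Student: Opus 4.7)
The plan is to reduce the existence of $(X',W',K')$ directly to the lemma just established together with remark \ref{kopasetic_test_remark}. By definition, a dual toric Landau-Ginzburg model exists precisely when the dual pair of linear $\Bbb{C}/\Bbb{Z}$-data $(A',K'),(B',L')$ is kopasetic, i.e.\ when $(A',\Im(K'))$ is kopasetic \emph{and} $A'\circ B'^\tau\geq 0$.

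First I would translate the hypothesis $Z\neq Y$ into the statement $c>0$, where $c=\mathrm{rk}\,\mathcal{V}$ is the codimension of $Z$ in $Y$: if $c=0$ then $\mathcal{V}$ is the zero bundle and its only section has empty zero locus, forcing $Z=Y$. With $c>0$ in hand, the preceding lemma immediately gives that $(A',\Im(K'))$ is kopasetic for every value of $\Im(K')$, in particular for the $\Im(K')$ arising from our specific toric Landau-Ginzburg model $(X,W,K)$. This takes care of condition (1) of kopaseticity of the dual pair.

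Next I would handle the compatibility condition $A'\circ B'^\tau\geq 0$ by invoking remark \ref{kopasetic_test_remark}: by definition of the duality involution we have $A'=B$ and $B'=A$, so $A'\circ B'^\tau = B\circ A^\tau = (A\circ B^\tau)^\tau$. Since $(X,W,K)$ is a toric Landau-Ginzburg model, $W$ is regular, and by the lemma characterizing regularity we have $A\circ B^\tau = \mathrm{div}\circ\mathrm{mon}^\tau\geq 0$; transposing a non-negative integer matrix yields a non-negative integer matrix, so $A'\circ B'^\tau\geq 0$ as required.

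Combining these two observations shows that the dual linear $\Bbb{C}/\Bbb{Z}$-data $(A',K'),(B',L')$ is kopasetic, and therefore by the definition of the dual toric Landau-Ginzburg model it determines a genuine toric Landau-Ginzburg model $(X',W',K')$. There is no real obstacle here beyond correctly bookkeeping the dualization; the substantive content — namely the nonemptiness of the interior of the polyhedral set cut out by the columns of $\mathrm{mon}_W^\tau$ — has already been extracted in the previous lemma via the cone $C$ dual to the non-negative span of those columns.
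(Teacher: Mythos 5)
Your proposal is correct and follows exactly the route the paper intends: the corollary is stated as an immediate consequence of the preceding lemma (which supplies kopaseticity of $(A',\Im(K'))$ once $c>0$) together with remark \ref{kopasetic_test_remark} (which disposes of the condition $A'\circ B'^\tau\geq 0$ via regularity of $W$). Your additional observation that $Z\neq Y$ forces $c>0$ is the only bookkeeping step the paper leaves implicit, and you have handled it correctly.
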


\section{Toric Landau-Ginzburg models dual to sigma models: structure}
\label{section:structure}

The toric variety $X'$ is very closely 
related to the total space $E'$ of 
some vector bundle $(\mathcal{V}')^\vee$
over a toric variety $Y'$.  We will
define these objects below.  

We proceed to give sufficient conditions 
under which $X' = E'$, first
in terms of the columns of $\mathrm{mon}_W^\tau$,
or equivalently the rows of $A'$.
Finally, if $X' = E'$ then we give conditions
under which $W'$ comes from a global section
of $\mathcal{V}'$.  In other words, we 
give conditions under which we can construct
a dual sigma model $(Z', K')$ to the sigma
model $(Z,K)$ corresponding to $(X,W,K)$.

\subsection*{The toric varieties $Y'$ and $E'$.}

Corollary \ref{B-block}, describing $\mathrm{mon}^\tau_W$, 
implies that since 
$
A' = \mathrm{mon}_W
$ it has the  
block form
\begin{equation}
A' =
\left[
\begin{array}{ccc}
d' & | & D' \\
0  & | & \mathrm{Id}
\end{array}
\right].
\end{equation}
This is with respect to the decompositions
$N_X = N_Y \oplus \Bbb{Z} \phi_1 \oplus 
\cdots \oplus \Bbb{Z} \phi_c$
and 
$\Bbb{Z}^\Xi = \Bbb{Z}^{\Xi^\times} \oplus 
\Bbb{Z} 0_1 \oplus \cdots \oplus \Bbb{Z} 0_c$.

The identity matrix in the lower right block guarantees 
$\mathrm{coker}(A') = \mathrm{coker}(d')$.  Thus
we can find a lift of $\Im(K')$ of the form
$(\alpha', 0)$, where $\alpha' \in \Bbb{Z}^{\Xi^\times}$.
We would like to make the definition $Y' = X(d', \Im(K'))$,
but unfortunately, there is no guarantee that $(d', \Im(K'))$
is kopasetic.

We will make the assumption that $(d', \Im(K'))$
is kopasetic.  This is a comfortable assumption to make
in light of the following lemma.  

\begin{lemma}
There is a non-empty open cone $U_{kopa}$ of $\mathrm{coker}(d')$
such that $(d',a')$ is kopasetic if and only if  $a' \in U_{kopa}$.
\end{lemma}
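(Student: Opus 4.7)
The plan is to prove three properties of $U_{kopa}$ in turn: stability under multiplication by $\Bbb{R}_{>0}$ (cone), openness, and non-emptiness. The third is where the real content lies.

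First I would address the cone property. If $(d',a')$ is kopasetic with lift $\alpha'$ of $a'$ and surjection $k$, then for any $\lambda>0$ the vector $\lambda\alpha'$ is a lift of $\lambda a'$, and the polyhedral set scales as $P_{\lambda\alpha'}=\lambda P_{\alpha'}$. Scaling does not alter the normal fan or the set of active inequalities, so $X(d',\lambda a')=X(d',a')$ and the same $k$ certifies kopasetic for $\lambda a'$.

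Openness would then follow by a perturbation argument. The kopasetic condition on $(d',a')$ depends only on the combinatorial type of $P_{\alpha'}$---which of the inequalities $\langle\xi_i,\nu\rangle+\alpha'_i\geq 0$ are facet-defining, and the primitive inward normals they yield---and this type is locally constant on the interior of chambers of the secondary fan decomposition of $\mathrm{coker}(d')_\Bbb{R}$. Since non-empty interior is itself an open condition, any $a''$ close enough to a kopasetic $a'$ admits a lift $\alpha''$ for which $P_{\alpha''}$ is combinatorially equivalent to $P_{\alpha'}$ and the original surjection $k$ remains valid.

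The main obstacle is non-emptiness, for which I would construct an explicit kopasetic $a'$. The natural candidate is $\alpha'=(1,1,\ldots,1)\in\Bbb{R}^{\Xi^\times}$. Then $P_{\alpha'}=\{\nu\mid\langle\xi_i,\nu\rangle+1\geq 0\text{ for all }i\}$ is the polar of $\mathrm{conv}(\{\xi_i\}\cup\{0\})\subset(M_Y)_\Bbb{R}$ and contains $0$ in its interior, giving condition (1) of kopasetic. Every facet of $P_{\alpha'}$ has primitive inward normal $v_\rho=\xi_{i_0}/k\in M_Y$ for some extremal $\xi_{i_0}\in\mathrm{conv}(\{\xi_i\}\cup\{0\})$ and positive integer $k$; since $v_\rho$ lies on the segment $[0,\xi_{i_0}]\subset P_{D_{j(i_0)}}$ and is integral, the genericity assumption on $W$ ensures $v_\rho\in\Xi^\times$. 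Hence the required surjection $k$ can be defined by sending the row of $d'$ corresponding to $v_\rho$ to the $\rho$-th standard generator and collapsing all other rows parallel to $v_\rho$ (in particular $\xi_{i_0}$ itself, when it is non-primitive) to zero. Commutativity and surjectivity of $k$ are then immediate, completing the proof.
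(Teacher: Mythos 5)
Your non-emptiness argument is correct and is in fact more careful than the paper's own proof on the decisive point. The paper simply translates every half-space to contain the origin in its interior (equivalently, takes $\alpha' > 0$) and observes that this persists under small perturbation; it never addresses the existence of the surjection $k$, which is the part that could genuinely fail, since the rows of $d'$ --- the nonzero lattice points of the polytopes $P_{D_j}$ --- need not be primitive. Your segment argument ($v_\rho = \xi_{i_0}/k$ is an integral point of $[0,\xi_{i_0}] \subset P_{D_{j(i_0)}}$, hence lies in $\Xi^\times$ by genericity) closes exactly this hole; note that it uses $0 \in P_{D_j}$, i.e.\ effectivity of the chosen representatives $D_j$, which is built into the decomposition $\Bbb{Z}^{\Xi_W} = \Bbb{Z}^{\Xi^\times}\oplus\Bbb{Z}0_1\oplus\cdots\oplus\Bbb{Z}0_c$. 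The scaling argument for the cone property is also fine.

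The one step that does not work as written is openness. You argue that kopasetic-ness depends only on the combinatorial type of $P_{\alpha'}$ and that this type is locally constant on the interiors of secondary-fan chambers; but the kopasetic locus is not a union of open chambers, so this only establishes openness at chamber-interior points. Kopasetic points can and do lie on walls: in the paper's $(\Bbb{P}^1)^2$ example the Batyrev--Borisov choice $\alpha'=(1,\ldots,1)$ produces a singular $Y'$ (the ``diamond'' polytope) while nearby choices produce its crepant resolution (the ``stop-sign''), so for $a''$ near that $a'$ the combinatorial type of $P_{\alpha''}$ is strictly finer, contradicting your claim that a nearby lift is combinatorially equivalent. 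The repair is already contained in your own non-emptiness step: the segment argument shows that condition (2) (existence of $k$) holds for \emph{every} $a'$ for which $P_{\alpha'}$ is full-dimensional, so the kopasetic locus coincides with $\{\,a' \mid P_{\alpha'} \text{ has non-empty interior}\,\}$, and that set is open (indeed convex) because a ball $B(\xi_0,\epsilon)\subset P_{\alpha'}$ shrinks only to $B(\xi_0,\epsilon/2)$ under a small perturbation of the finitely many defining inequalities. With that substitution your proof is complete, and overall it supplies details the paper's one-line version omits.
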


\begin{proof}
An affine half-space can be translated to contain the origin within
its interior. Choose such an arrangement for all the affine half-spaces
with inward normals coming from the row of $d'$. This corresponds
to a point $a'_0 \in \mathrm{coker}(d')$. The origin remains in the interior 
of the intersection
the of the half-spaces for small deformations of $a'$ such that $(d',a')$
is kopasetic.
\end{proof}

Note that it is standard to interpret moving from one value $a'_0$ to another $a'$ 
as a deformation of the symplectic structure of the
toric variety  $X(d',a'_0)$. This may involve birational transformations
corresponding to when the volumes of curves (or higher dimensional subvarieties)
shrink to $0$.

\begin{definition}
Define the 
toric variety 
\begin{equation}
Y' = X(d',\Im(K')). 
\end{equation}
Denote 
the map of equation (\ref{Gmap}) by $k_{Y'}$ and 
the columns of $D'$ by $D'_j$.  These are
elements of $\Bbb{Z}^{\Xi^\times}$.
We now have the vector bundle
\begin{equation}\mathcal{V}' = \mathcal{O}_{Y'}(k_{Y'}(D'_1)) \oplus
\cdots \oplus \mathcal{O}_{Y'}(k_{Y'}(D'_c)), \end{equation}
the toric variety
\begin{equation}E' =
\mathrm{Tot}(\mathrm{Hom}_{\mathcal{O}_{Y'}}(\mathcal{V}', \mathcal{O}_{Y'})),
\end{equation}
and a function $W'$ defined on it by $B'$ and $L'$.
\end{definition}

\subsection*{Conditions for $X'=E'$}

\begin{definition}
We will spend the rest of the section discussing the ``rows''
of the homomorphisms $A' \colon  M_{X'} \rightarrow \Bbb{Z}^\Xi$ 
and 
$d' \colon M_{Y'} \rightarrow \Bbb{Z}^{\Xi^\times}$.  By this we 
mean the images 
under the transpose of the standard basis vectors.  The rows
of $d'$ may be a  multiset.
\end{definition}

Keep in mind that we will be using the follow identifications.
\begin{equation}
\begin{array}{l}
M_Y = N_{Y'} \ , \\
N_Y = M_{Y'} \ , \\
M_X = N_{E'} \ , \ \mathrm{and} \\
N_X = M_{E'} \ .
\end{array}
\end{equation}

The strategy for comparing
$X'$ and $E'$ is based on comparing $\mathrm{div}_{X'}$ and $\mathrm{div}_{E'}$.  
Both of these toric varieties are
defined from  rational convex polyhedral sets.
The defining inequalities for $X'$ come directly
from the matrix $A' = \mathrm{mon}_W$, 
and the element $a' \in \mathrm{coker}(A')_{\Bbb{C}/\Bbb{Z}}$.

On the other hand $E'$ is formed by first using
the upper left block, $d'$, of $A'$ to define the toric
variety $Y'$. Then a certain submatrix of the upper right
block $D'$, of the $A'$ are treated as divisors and we
get the family of inequalities coming from the 
formula in equation (\ref{bigA-formula}).

We will compare 
$$\mathrm{div}_{X'} = k_{X'} \circ A'$$   and 
$$
\mathrm{div}_{E'} = 
\left[
\begin{array}{ccc}
k_{Y'} \circ d' & | & k_{Y'} \circ D' \\
0 &|& \mathrm{Id} 
\end{array}
\right].
$$
Post-composition with $k$ results in deleting rows whose affine 
half-space is not necessary for defining the polytope.
We need some conditions under which
the rows deleted from $A'$ using $k_{X'}$ are the same of those
deleted using $k_{Y'}$. Recall, the rows of $A'$ are of the form 
$(P_{D_j} \cap M_Y) \times \{ \sigma_j \}$.

The only case when such a comparison make sense is when $a'$ lifts to 
$(\alpha', 0) \in 
\Bbb{Z}^{\Xi^\times} \oplus \Bbb{Z} 0_1 \oplus \cdots \oplus \Bbb{Z} 0_c$, 
such that $\alpha' > 0$, as pointed out in the lemma below.

\begin{lemma}
If $E'$ is defined (i.e. $(d',a')$ is kopasetic), then $a'$ has a lift as above.
\end{lemma}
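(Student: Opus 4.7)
The plan is to exploit the block-triangular structure of $A'$ to reduce the claim to a simple translation argument on the polyhedral set $P_{\alpha'}$ of $(d', \alpha')$. Because the lower-right block of $A'$ is the identity, the short exact sequence
\begin{equation}
0 \to M_{Y'} \xrightarrow{d'} \Bbb{Z}^{\Xi^\times} \to \mathrm{coker}(d') \to 0
\end{equation}
computes $\mathrm{coker}(A')$: every class in $\mathrm{coker}(A')$ has a lift of the form $(\alpha', 0) \in \Bbb{Z}^{\Xi^\times} \oplus \Bbb{Z} 0_1 \oplus \cdots \oplus \Bbb{Z} 0_c$, with $\alpha'$ determined up to $d'(M_{Y'})$. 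So only the sign condition $\alpha' > 0$ needs to be arranged.

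First, I would invoke the kopasetic hypothesis on $(d', a')$: by definition, there is a lift $\alpha'_0 \in \Bbb{R}^{\Xi^\times}$ of $a' = \Im(K')$ such that the polyhedral set
\begin{equation}
P_{\alpha'_0} = \{ \xi \in (M_{Y'})_\Bbb{R} \ | \ d'(\xi) + \alpha'_0 \geq 0 \}
\end{equation}
has non-empty interior. By the usual characterization of the interior of a rational convex polyhedron defined by affine inequalities, any interior point $\xi_0 \in P_{\alpha'_0}$ satisfies $d'(\xi_0) + \alpha'_0 > 0$ strictly, componentwise in $\Bbb{R}^{\Xi^\times}$.

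Next, I would perform the translation step. Set $\alpha' := \alpha'_0 + d'(\xi_0)$. Since $\alpha' - \alpha'_0 = d'(\xi_0) \in \mathrm{image}(d')$, the element $\alpha'$ is another lift of $a' \in \mathrm{coker}(d')_{\Bbb{C}/\Bbb{Z}} = \mathrm{coker}(A')_{\Bbb{C}/\Bbb{Z}}$; and by the strict-positivity observation above, $\alpha' > 0$ componentwise. Hence $(\alpha', 0)$ is a lift of $a'$ of the required form.

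The only conceptual point is to recognize that ``non-empty interior'' really does yield a point at which every defining inequality is \emph{strict}; beyond that the argument is just the translation lemma already used in the proof that $X(A,a)$ is independent of the chosen lift. There is no serious obstacle once one sees that the identity block in $A'$ reduces everything to a statement about lifts against $d'$ alone.
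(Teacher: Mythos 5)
Your proof is correct and follows essentially the same route as the paper, whose entire proof is the one-line instruction ``Choose $\alpha'$ so that $0$ lies in the interior of the polyhedral set defining $Y'$'' --- i.e.\ exactly your translation of a lift by $d'(\xi_0)$ for an interior point $\xi_0$, after which $0 \in \mathrm{int}(P_{\alpha'})$ forces $\alpha' > 0$. The only detail you leave implicit (and the paper does too) is that strictness of the inequalities at an interior point uses that the rows of $d'$ are nonzero, which holds here since they are elements of $P^\times_{D_j} \cap M_Y$.
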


\begin{proof}
Choose $\alpha'$ so that $0$ lies in the interior of the
polyhedral set defining $Y'$.
\end{proof}

\begin{definition}
Let $\alpha' = (\alpha'_1, \cdots, \alpha'_c)$ with respect to the
decomposition $\Xi^\times = \coprod_j (\Gamma(Y, \mathcal{O}_Y(D_j)) \cap \Xi^\times)$.
Furthermore, we can write $\alpha'_j$ in components as 
$\alpha'_j = (\alpha'_{(\nu, \sigma_j)})_{\nu \in P^\times_{D_j}}$.
\end{definition}

\begin{lemma}
The facets of the polytope of $X'$ correspond to the 
non-zero vertices of the convex hull $C_j$ points 
\begin{equation}
\{ (\nu, \sigma_j) / \alpha'_{(\nu,\sigma_j)} \ | \ \nu \in P^\times_{D_j} \cap M_Y \} \cup 
\{ \lambda (0,\sigma_j) \ | \ 0 \leq \lambda \in \Bbb{R} \}, 
\end{equation}
including the vertex 
``$\infty (0, \sigma_j)$''.
\end{lemma}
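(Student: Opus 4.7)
The plan is to identify the facets of the polytope of $X'$ with the non-redundant defining inequalities and then to apply Corollary \ref{redundantPolar} block by block. By Corollary \ref{B-block}, the rows of $A'$ split into $c$ groups indexed by $j$: rows of the form $(\nu, \sigma_j)$ with $\nu \in P^\times_{D_j} \cap M_Y$ (contributing constant $\alpha'_{(\nu,\sigma_j)} > 0$ from the lift) and one row $(0, \sigma_j)$ per $j$ (contributing constant $0$). The defining inequalities of $P_{\alpha'}$ are therefore
\begin{equation*}
(\nu, \sigma_j)(\xi) + \alpha'_{(\nu,\sigma_j)} \ge 0 \quad \text{and} \quad \sigma_j(\xi) \ge 0 .
\end{equation*}

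The first key step is to observe that redundancy decouples across $j$. Suppose the inequality indexed by $(\nu_0, \sigma_{j_0})$ is implied by a conic combination of the others. Matching $\sigma$-components in that relation and using the fact that $\sigma_1, \dots, \sigma_c$ are linearly independent in $M_X$ forces every coefficient from a group $j \ne j_0$ to vanish. Hence it suffices to work one group at a time and identify which rows within group $j$ are non-redundant.

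Within group $j$, the rows with positive constant are directly amenable to Corollary \ref{redundantPolar} after a harmless translation putting $0$ in the interior of $P_{\alpha'}$: the corollary shows that $(\nu,\sigma_j)(\xi) + \alpha'_{(\nu,\sigma_j)} \ge 0$ defines a facet if and only if $(\nu,\sigma_j)/\alpha'_{(\nu,\sigma_j)}$ is a non-zero vertex of the convex hull of $\{(\nu,\sigma_j)/\alpha'_{(\nu,\sigma_j)} : \nu \in P^\times_{D_j}\} \cup \{0\}$. To fold the zero-constant row $(0,\sigma_j)$ into the same picture, perturb its constant to $\varepsilon > 0$, apply the corollary to the perturbed polytope (whose combinatorial type agrees with the original for small $\varepsilon$), and let $\varepsilon \to 0$. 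The normalized normal of the perturbed row is $(0,\sigma_j)/\varepsilon$, which escapes to infinity along the ray $\{\lambda(0,\sigma_j) : \lambda \ge 0\}$; augmenting the convex hull by this entire ray produces exactly $C_j$, and the perturbed inequality remains non-redundant for all small $\varepsilon$ precisely when $C_j$ has a distinguished extremal point ``at infinity'' in the direction $(0,\sigma_j)$.

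The main technical point is justifying that the limit $\varepsilon \to 0$ preserves the vertex/facet correspondence: a vertex of the perturbed convex hull must either persist as an honest vertex of $C_j$ (or become the vertex at infinity), and a persistent convex-combination relation witnessing redundancy at $\varepsilon > 0$ must yield a corresponding relation in the limit. Once this continuity is checked, assembling the conclusions across the $c$ groups yields the lemma: the facets of $X'$ are in bijection with the non-zero vertices of the $C_j$, where ``non-zero'' is interpreted to include the vertex $\infty(0,\sigma_j)$ coming from the unbounded ray.
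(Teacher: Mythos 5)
Your proposal is correct and follows essentially the same route as the paper: perturb the zero-constant rows $(0,\sigma_j)$ to $\varepsilon>0$, use Corollary \ref{redundantPolar} to convert non-redundancy of an inequality into being a vertex of the polar convex hull, and let $\varepsilon\to 0$ so that $(0,\sigma_j)/\varepsilon$ escapes to the vertex at infinity along the ray $\{\lambda(0,\sigma_j)\}$. The only cosmetic difference is that you decouple the $c$ blocks by matching $\sigma$-components in a Farkas-type certificate, whereas the paper invokes an equivalent splitting fact for vertices of convex hulls of points lying in the summands of a direct sum; your aside about a ``harmless translation'' is unnecessary (and would alter the normalized normals), since the $\varepsilon$-perturbation already places $0$ in the interior.
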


\begin{proof}
If we deform $(\alpha', 0)$ to $(\alpha', \epsilon)$, we can use corollary \ref{redundantPolar}
to write the dual polytope 
to the one defined
by $A'$ can be expressed as the convex hull of the points 
$(\nu,\sigma_j)/ \alpha'_{(\nu,\sigma_j)}$ and $(0,\sigma_j)/\epsilon$, 
$j = 1, \cdots, c$.

If a convex polyhedral set $C$ in a vectorspace 
$V \oplus W$ defined by points that lie in either
$V \oplus \{ 0 \}$ 
or 
$\{ 0 \} \oplus W$. 
Has vertices that are exactly
the vertices of convex hull of the points in $V \oplus \{ 0 \}$ and 
the vertices of the convex hull of the points 
in $ C \cap \{ 0 \} \oplus W$.

Putting these facts together and taking the limit $\epsilon \rightarrow 0$
gives the result.
\end{proof}

\begin{definition}
\label{definition:V_j}
Denote the projection 
$(M_{Y'})_\Bbb{R} \oplus \Bbb{R}\sigma_j \rightarrow (M_{Y'})_\Bbb{R}$,
by $\pi_j$, the vertices of $C_j$ by $V_j$, and the set 
$V_j \setminus \{ (0,0), (0,\sigma_j)\}$  by $V_j^\times$.
\end{definition}

\begin{theorem}
A row $\nu \in P^\times_{D_j}$ appears in $\mathrm{div}_{Y'}$
iff $\pi_j((\nu, \sigma_j)/\alpha_\nu)$ defines a non-zero vertex of 
$\mathrm{conv}(\{ 0 \} \cup \bigcup_j \pi_j(V_j))$.
\end{theorem}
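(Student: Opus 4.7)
The plan is to translate the question into a vertex condition on the polar of the polytope defining $Y'$, and then to identify that polar with the convex hull appearing on the right-hand side of the theorem.

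First, I would apply Corollary~\ref{redundantPolar} to the kopasetic data $(d', \alpha')$. Since we are free to choose the lift $\alpha'$ with all positive components (so that $0$ lies in the interior of $P_{\alpha'}$), the corollary says that a row $\nu \in P^\times_{D_j}$ of $d'$ is kept by $k_{Y'}$, i.e.\ appears in $\mathrm{div}_{Y'}$, if and only if $\nu/\alpha'_{(\nu,\sigma_j)}$ is a non-zero vertex of the polar
\begin{equation*}
P^\circ_{\alpha'} \;=\; \mathrm{conv}\bigl(\{0\} \cup \{\nu'/\alpha'_{\nu'} \mid \nu' \in \Xi^\times\}\bigr).
\end{equation*}

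The main step is then to identify this polar with $\mathrm{conv}(\{0\} \cup \bigcup_j \pi_j(V_j))$. For the inclusion $(\subseteq)$, every generator $\nu'/\alpha'_{\nu'}$ with $\nu' \in P^\times_{D_{j'}}$ equals $\pi_{j'}((\nu', \sigma_{j'})/\alpha'_{\nu'})$; since $(\nu', \sigma_{j'})/\alpha'_{\nu'}$ is one of the defining points of $C_{j'}$, it lies in $\mathrm{conv}(V_{j'})$, and hence its projection lies in $\mathrm{conv}(\pi_{j'}(V_{j'}))$. For the reverse inclusion, every vertex in $V_j$ is either of the form $(\nu, \sigma_j)/\alpha'_\nu$, which projects to $\nu/\alpha'_\nu$, or else lies on the ray $\{\lambda(0, \sigma_j) \mid \lambda \geq 0\}$ (including the vertex at infinity), which $\pi_j$ collapses to $0$. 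Both kinds of images therefore lie in $\{0\} \cup \{\nu/\alpha'_\nu \mid \nu \in \Xi^\times\}$, completing the identification.

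Combining these steps with the tautology $\pi_j((\nu, \sigma_j)/\alpha_\nu) = \nu/\alpha_\nu$ yields the biconditional claimed. The main obstacle I anticipate is bookkeeping: making sure that the recession direction and the vertex at infinity of each $C_j$ contribute nothing beyond $0$ after $\pi_j$, and that the polar correspondence between facets of $P_{\alpha'}$ and non-zero vertices of $P^\circ_{\alpha'}$ is genuinely applicable here. Both concerns are controlled by the choice $\alpha' > 0$, which places the origin in the interior of $P_{\alpha'}$, and by the fact that $\pi_j$ annihilates the entire $\sigma_j$-axis.
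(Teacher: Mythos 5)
Your proposal is correct and follows essentially the same route as the paper: identify $\mathrm{conv}(\{0\} \cup \bigcup_j \pi_j(V_j))$ with the polar of the polytope defined by $(d',\alpha')$, then invoke the facet--vertex correspondence (Corollary \ref{redundantPolar}) to characterize which rows survive in $\mathrm{div}_{Y'}$. The only difference is that you spell out both inclusions of the polar identification and the handling of the recession ray and the vertex at infinity, which the paper's two-line proof leaves implicit.
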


\begin{proof}
The projection takes $(\nu, \sigma_j)/\alpha'_{(\nu, \sigma_j)}$
to $\nu/\alpha'_{(\nu, \sigma_j)}$, and it is the convex hull
of $\{ 0 \}$ and these points is exactly the dual polytope 
to the on defined by $(d', \alpha')$.
\end{proof}

\begin{theorem}
\label{mirrorIsBundle}
(Assuming $a'$ has a lift as above) $X' = E'$ if and only if, 
for all $j$, every element in $V_j^\times$
defines a vertex of 
$\mathrm{conv}(\{ 0 \} \cup \bigcup_j \pi_j(V_j))$.
\end{theorem}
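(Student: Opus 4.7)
The plan is to compare the character-to-divisor maps $\mathrm{div}_{X'}$ and $\mathrm{div}_{E'}$ row by row, using the block decomposition of $A'$ from corollary~\ref{B-block}. Since $X'$ and $E'$ are both determined as toric varieties by these maps (together with the common lift $\alpha'$ of $\Im(K')$), equality of the maps will yield equality of the varieties.

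From corollary~\ref{B-block}, $A'$ has block form $\left[\begin{smallmatrix} d' & D' \\ 0 & \mathrm{Id} \end{smallmatrix}\right]$, and corollary~\ref{bigA-formula} gives $\mathrm{div}_{E'} = \left[\begin{smallmatrix} k_{Y'}\circ d' & k_{Y'}\circ D' \\ 0 & \mathrm{Id} \end{smallmatrix}\right]$. Since $\mathrm{div}_{X'}=k_{X'}\circ A'$, the question reduces to whether $k_{X'}$ factors as $k_{Y'}\oplus\mathrm{Id}$ on the two blocks. The bottom block is automatic: the rows of $A'$ indexed by $\{0_1,\ldots,0_c\}$ correspond to the ``infinity'' vertices $\infty(0,\sigma_j)$ of the unbounded sets $C_j$ from the preceding lemma, so they always survive $k_{X'}$ and match the $\mathrm{Id}$-block of $\mathrm{div}_{E'}$ (the zero-section divisors of $E'$).

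For the top block, the preceding lemma says the row of $A'$ indexed by $(\nu,\sigma_j)\in\Xi^\times$ survives $k_{X'}$ iff $(\nu,\sigma_j)/\alpha'_{(\nu,\sigma_j)}\in V_j^\times$, while by the preceding theorem the corresponding row of $d'$ survives $k_{Y'}$ iff $\pi_j((\nu,\sigma_j)/\alpha'_{(\nu,\sigma_j)})$ is a non-zero vertex of $\mathrm{conv}(\{0\}\cup\bigcup_k\pi_k(V_k))$. For the $(\Leftarrow)$ direction, under the hypothesis the forward inclusion of surviving row-sets is immediate, and for the reverse inclusion any non-zero vertex of the projected hull must arise as $\pi_j(v)$ for some $v\in V_j$ (vertices of a projected convex hull lift to vertices of the original generating set); since $V_j\setminus V_j^\times\subseteq\{(0,0),(0,\sigma_j)\}\subseteq\pi_j^{-1}(0)$, any non-zero vertex must come from $V_j^\times$, and the hypothesis then promotes it to a vertex. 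For the $(\Rightarrow)$ direction, any $v\in V_j^\times$ whose projection is not a vertex produces a row of $\mathrm{div}_{X'}$ with no counterpart in $\mathrm{div}_{E'}$, so the two maps, and hence the two toric varieties, differ.

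The main obstacle I expect is verifying that coincidence of the row-index sets really implies equality of $\mathrm{div}_{X'}$ and $\mathrm{div}_{E'}$ as maps---one must trace the $D'$-entries through both factorizations and address the potential pathology where two rows $(\nu,\sigma_j)$ and $(\nu,\sigma_k)$ of $A'$ share the same base component $\nu$. These yield identical rows of $d'$ (collapsed to one by $k_{Y'}$) but distinct rows of $A'$ (preserved by $k_{X'}$), which would spoil the clean block decomposition $k_{X'}=k_{Y'}\oplus\mathrm{Id}$ and probably necessitate a mild genericity hypothesis on $\alpha'$.
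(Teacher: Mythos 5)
Your argument is correct and is essentially the paper's own proof: the paper disposes of this theorem in one line (``this simply states that the rows are the same, which is exactly what we need''), and your row-by-row comparison of $\mathrm{div}_{X'} = k_{X'}\circ A'$ with the block form of $\mathrm{div}_{E'}$, mediated by the preceding lemma and theorem, is precisely the content that sentence compresses. The multiset pathology you flag at the end --- terms $(\nu,\sigma_j)$ and $(\nu,\sigma_k)$ sharing a base component $\nu$ --- is a genuine subtlety that the paper likewise leaves implicit, addressed only by its earlier remark that duplicate inequalities are dropped in constructing $k$.
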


\begin{proof}
This simply states that the rows are the same, which is exactly
what we need.
\end{proof}

\subsection*{The dual superpotential}
\label{subsection-dualSuper}

Finally, we discuss the superpotential $W'$ on the dual.
First we discuss arbitrary functions on the total space, 
$E'$, of a split vector bundle over a toric variety $Y'$.

Let $Y'$ be a toric variety, $\{ D'_1 ,\cdots, D'_c \}$
a set of $T$-invariant divisors,
and $f$
a function on $E' := \mathrm{Tot}(\mathcal{O}_Y(-D'_1) \oplus \cdots \oplus \mathcal{O}_Y(-D'_c))$.
We want to know when $f$ comes from a global section of
$\mathcal{O}_Y(D'_1) \oplus \cdots \oplus \mathcal{O}_Y(D'_c)$.

The character group of  $E'$ is given by $M_{Y'} \oplus \Bbb{Z}\xi'_1 \oplus
\cdots \oplus \Bbb{Z}\xi'_c$.
Meromorphic global sections have the form $f=f_1 \xi'_1 + \cdots + f_c \xi'_c$, where the $f_j$'s have terms in $M_{Y'}$.
The key here is that $f$ is a linear form in the $\xi'$'s.
As we mentioned before in equation (\ref{regularCharacter}),
$\mu \in M_{E'}$ is regular iff $\mathrm{div}_{E'}(\mu) \geq 0$.

Now, assume that $X = \mathrm{Tot}(\mathcal{O}_Y(-D_1) \oplus \cdots \oplus \mathcal{O}_Y(-D_c))$
over a toric variety $Y$.  Also assume the dual toric variety $X(A',a')$
equals  $E'$.

\begin{lemma}
\label{mirrorSuperSection}
$W'$ comes from a global section exactly
when $\exists$ effective $T$-invariant divisors on $X$ such that 
$\tilde{D}_1, \cdots, \tilde{D}_c$ such that $D_j \sim \tilde{D}_j$,
and 
$\tilde{D}_1 + \cdots + \tilde{D}_c = -\kappa_X$.
Where $-\kappa_X$ is the canonical choice of anticanonical divisor representative
of equation (\ref{canon_anticanon}).
\end{lemma}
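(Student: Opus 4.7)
The plan is to read off the monomial structure of $W'$ directly from Corollary \ref{bigA-formula} applied to $B' = \mathrm{div}_X$, and then translate the resulting combinatorial condition on the divisors $D_j$ into the desired statement about effective $T$-invariant divisors on $X$. First I would unpack what it means for $W' \in \Gamma(E', \mathcal{O})$ to come from a global section of $\mathcal{V}' = \bigoplus_j \mathcal{O}_{Y'}(k_{Y'}(D'_j))$. Since $E' = \mathrm{Tot}((\mathcal{V}')^\vee)$ has character group $M_{E'} = M_{Y'} \oplus \Bbb{Z}\xi'_1 \oplus \cdots \oplus \Bbb{Z}\xi'_c$ with the $\xi'_j$'s the fiber coordinates, and a section $(s_1, \ldots, s_c) \in \Gamma(\mathcal{V}')$ corresponds to $\sum_j s_j\, \xi'_j$, this amounts to requiring each monomial of $W'$ to be linear in the $\xi'_j$'s: exactly one $\xi'_j$ appearing to first power and all others to zero.

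Next I would apply Corollary \ref{bigA-formula} together with the identification $M_{E'} = N_X$ and $\xi'_j \leftrightarrow \phi_j$ coming from the decomposition (\ref{N-decomp}). The monomials of $W'$ are indexed by rows of $B' = \mathrm{div}_X$: the row indexed by $\rho \in R_Y$ gives an exponent in $N_X$ whose $\phi_j$-component is $(D_j)_\rho$, and the row indexed by $X_k$ gives the exponent $\phi_k$ itself. The $X_k$-rows automatically produce monomials $\xi'_k$, while the $\rho$-row condition is that $((D_j)_\rho)_{j=1}^c$ be a standard basis vector for each $\rho$. Summed over $\rho \in R_Y$, this is equivalent to asking that the $D_j$ be effective divisors with coefficients in $\{0,1\}$ and pairwise disjoint supports whose union is all of $R_Y$, i.e., $\sum_j D_j = -\kappa_Y$.

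Finally, I would invoke the flexibility in the presentation: replacing a section $\sigma_j$ by $\tilde\sigma_j = m_j + \sigma_j \in M_X$ for any $m_j \in M_Y$ replaces $D_j$ by the linearly equivalent effective representative $\tilde D_j^Y := D_j + \mathrm{div}_Y(m_j)$ without altering the underlying toric variety $X$, the dual $X'$, or the intrinsic function $W'$. Under this freedom the combinatorial condition becomes: there exist effective $\tilde D_j^Y \sim D_j$ on $Y$ with $\sum_j \tilde D_j^Y = -\kappa_Y$. To match the lemma's statement on $X$, define $\tilde D_j := p^* \tilde D_j^Y + X_j = \mathrm{div}_X(\tilde\sigma_j)$; these are effective $T$-invariant divisors on $X$, each linearly equivalent to $\mathrm{div}_X(\sigma_j) = p^* D_j + X_j$ (i.e., to $D_j$ in the sense of the lemma), and using $-\kappa_X = p^*(-\kappa_Y) + \sum_j X_j$ from (\ref{canon_anticanon}) one checks $\sum_j \tilde D_j = -\kappa_X$. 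The converse goes in reverse. The main obstacle will be cleanly establishing this dictionary between principal divisors on $X$ of the form $\mathrm{div}_X(\tilde\sigma_j)$ and effective representatives $\tilde D_j^Y$ on $Y$ via the matrix decomposition of Corollary \ref{bigA-formula}, tracking the extra $\sum_j X_j$ contribution needed to produce $-\kappa_X$ rather than $p^*(-\kappa_Y)$.
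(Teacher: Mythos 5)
Your proof is correct and follows essentially the same route as the paper's: the paper's entire proof is the observation that the terms of $W'$ are the rows of $A = \mathrm{div}_X$ and that the claim is then ``clear from the matrix'' of Corollary \ref{bigA-formula}, which is precisely the computation you carry out — linearity of $W'$ in the fiber coordinates $\xi'_j$ forces each vector $\bigl((D_j)_\rho\bigr)_{j=1}^c$ to be a standard basis vector, i.e.\ the $D_j$ admit effective representatives summing to the canonical anticanonical representative. Your explicit handling of the change of sections $\sigma_j \mapsto m_j + \sigma_j$ and of the bookkeeping translating $\sum_j \tilde D_j^Y = -\kappa_Y$ on $Y$ into $\sum_j \tilde D_j = -\kappa_X$ on $X$ supplies exactly the details the paper leaves implicit.
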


\begin{proof}
The terms of $W'$ are the rows of
$A=\mathrm{div}_{X}$.  The lemma is then  clear from the matrix of
$A^\tau$, see theorem \ref{bigA-formula}. 
\end{proof}

\begin{remark}
\label{remark:double_dual}
The dual to $(X', W', K')$ is computed from the dual of its linear data.  This
data is the same as that of $(X, W, K)$ except some rows of $\mathrm{mon}_W$
may have been deleted.  This means that the double dual of $(X, W, K)$
is $(X, W^-, K)$, where $X$ and $K$ are the same and $W^-$ is obtained from
$W$ by deleting some terms.  

If $(X, W, K)$ is obtained from a sigma model
then $(X, W^-, K)$ comes from a sigma model in the same ambient space and
the same bundle.  However the sigma models might differ by a complex 
deformation.  It is possible that this unfortunate 
discrepancy can be addressed with a slight modification of the 
dualization process.  We will explain this in the discussion 
section the end of this paper.
\end{remark}

\subsection*{An example: three points on $\Bbb{P}^1$}
A configuration on three points on $\Bbb{P}^1$
corresponds to a Landau-Ginzburg model $X = \mathrm{Tot}(\mathcal{O}_{\Bbb{P}^1}(-3))$,
$W$, and $K$.  If we identify $X$ with the blow up of $\Bbb{C}^2/\Bbb{Z}_3$ at the fixed point, $W$
is the pullback of a degree three homogeneous polynomial on $\Bbb{C}^2$ pushed forward to a
function on the quotient.

The character group is generated by elements that correspond to $u/v$ and $u^2 v$ on $\Bbb{C}^2$.
With respect to these coordinates we have
\begin{equation}
\mathrm{div}_X = 
\left[
\begin{array}{rr}
1 & 2 \\
-1 & 1 \\
0 & 1
\end{array}
\right],\ \  
\mathrm{and} \  \ 
\mathrm{mon}_W = 
\left[
\begin{array}{rr}
1 & 1 \\
-1 & 1 \\
-2 & 1 \\
0 & 1
\end{array}
\right].
\end{equation}
If we write $x$ and $y$ for the coordinates dual to these on $X'$, there are four
possibilities for $X'$ depending on the choice of $W$.
These are indicated by the 
polyhedral sets in figure \ref{fig:3pointsMirrors}.  If $\Lambda$ is a lift of $L$, examples
of values that give these
are
$\Im(\Lambda) = (\alpha', 0) = (0, 2, 5, 0), 
(0,3, 5, 0), (-1, -1, 0, 0)$, and 
$(-1, 0, -1, 0)$ 
respectively.

\begin{figure}[h]
\begin{center}
\includegraphics[scale=.5]{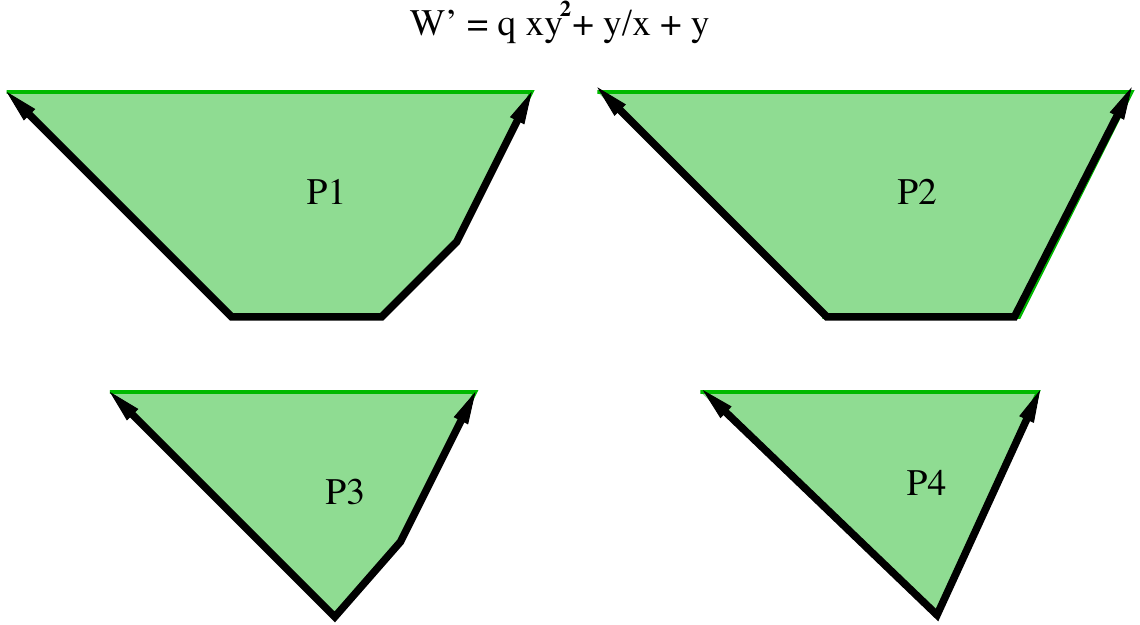}
\end{center}
\caption{Mirror possibilities for three points on $\Bbb{P}^1$.}
\label{fig:3pointsMirrors}
\end{figure}

The inward normals come from the rows of $\mathrm{mon}_W$.
The superpotential $W'$ above, has monomials
whose exponent vectors are the rows of 
$\mathrm{div}_X$. $q$ is determined $K$ by choosing
a lift of $K$ of the form $(\beta + i \alpha, 0 , 0)$ and
setting $q = \exp(-2 \pi  (\alpha + i \beta))$.  $W'$
is not linear in $y$, so it will not come from a global
section of $E'$.

\begin{equation}
d' = 
\left[
\begin{array}{r}
1 \\ 
-1 \\ 
-2
\end{array}
\right]
\end{equation}

The toric variety $Y'$ is given by 
the polyhedral set in $\Bbb{R}$ defined by
\begin{equation}
\left[
\begin{array}{r}
1 \\ 
-1 \\ 
-2
\end{array}
\right] \xi + \alpha' \geq 0 \ .
\end{equation}

For the polyhedral set P1, $(d', a')$ is kopasetic.
The other polyhedral sets can  give kopasetic data
at specific values.
For example P2 for $\alpha' = (0, 1, 2, 0)$, P3 for 
$\alpha' = (-1, -1, 0, -1)$, and P4 for $\alpha' = (0,0,0,0)$. 
One can easily 
check this by noting that the rows $d'$
that define facets, must be primitive for 
the map $k$ to exist.
However, we will only look at P1 since
these other cases do not
shed more light on the situation.

In this case 
\begin{equation}
\mathrm{div}_{Y'} = 
\left[
\begin{array}{r}
1 \\
-1
\end{array}
\right] \ ,
\end{equation}
and $k$ projects onto the first two basis vectors, so $k((0,2,5,0)) = (0,2)$.

For the divisors we have
\begin{equation}
D' = D_1' = \left[
\begin{array}{r}
1 \\
1 \\
1
\end{array}
\right] \ , \ \mathrm{and} \quad  
k(D_1') = \left[
\begin{array}{r}
1 \\
1
\end{array}
\right] \ .
\end{equation}
This means that $Y' = \Bbb{P}^1$, and $E'$ is the total space of $\mathcal{O}(-2)$.
The polytope corresponding to this toric variety is below. 
\begin{figure}[h]
\begin{center}
\includegraphics[scale=.5]{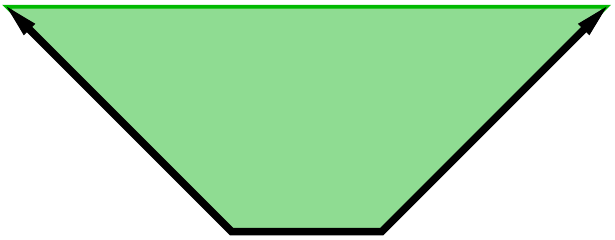}
\end{center}
\caption{Tot $\mathcal{O}(-2)$ over  $\Bbb{P}^1$.}
\label{fig:totO(-2)}
\end{figure}

This figure does not agree with P1.  This is predicted by theorem \ref{mirrorIsBundle}
since all facets are present, but for $Y'$ not all are needed.
One interesing
thing that is apparent in this example and happens in general is that the elements of 
$V_j^\times $ that are not vertices of $\mathrm{conv}(\pi_j(V_j^\times))$
serve to partially compactify $E'$.  Furthermore, both $E'$ and $X'(A',\Im(K'))$ are 
always local Calabi-Yau (i.e. have trivial canonical class).

\smallskip

\section{Comparison: Givental and Hori-Vafa}

\subsection{Givental}
\label{GIVequalsFTLG}

Let $Y$ be a $n$-dimensional smooth complete toric variety with $T$-invariant Cartier
divisors $R_Y = \{\rho_1, \cdots, \rho_r \}$.  
Recall, \begin{equation}\mathrm{coker}(\mathrm{div}_Y) = \mathrm{H}^2(Y; \Bbb{Z})\end{equation}
and is torsion free.

Let $D_1, \cdots, D_c$
be effective $T$-invariant Cartier divisors. Set
$\mathcal{V} := \mathcal{O}_Y(D_1) \oplus \cdots \oplus \mathcal{O}_Y(D_c)$,
as usual, and write $X$ for the total space of the dual.
Assume that there is no point in $Y$ at which all global sections of $\mathcal{V}$
vanish.

Let $p_1, \cdots, p_{r-n}$ be a positive basis for $\mathrm{H}^2(Y; \Bbb{Z})$,
and $\omega$ a symplectic form on $Y$ with cohomology class 
\begin{equation}[\omega] = \sum_i t_i p_i . \end{equation}
Denote $\mathrm{exp}(t_i)$ by $Q_i$.
Set \begin{equation}[\rho_\upsilon] = \sum_i m_{i\upsilon} p_i \quad  \mathrm{and} \quad  [D_j] = \sum_i d_{ij} p_i . \end{equation}
So a representative for $t$ is $\hat{t}$ with $\sum \hat{t}_\upsilon m_{i \upsilon} = t_i$.

In this notation, $[-] \colon   \Bbb{Z}^{R_X} \rightarrow \mathrm{H}^2(Y; \Bbb{Z})$
is given by
\begin{equation}
\left[
\begin{array}{ccc}
\mathcal{M} & | & -D
\end{array}
\right]
 . \end{equation}

Define \begin{equation}F(x,y) := x_1 + \cdots + x_r + y_1 + \cdots + y_c\end{equation}
as a function on  \begin{equation}H':=\{(x,y)\ |\   \prod_{\upsilon=1}^r x_\upsilon^{m_{i\upsilon}} = 
Q_i \prod_{j=1}^c y_j^{d_{ij}}\ , \ i = 1, \cdots, r-n \}
\quad \subseteq (\Bbb{C})^{r+c}_{x,y} \times (\Bbb{C^\times})^{r-n}_Q . \end{equation}

According to \cite[pg. 45]{ellipticGivental}, relations in the quantum cohomology
of the zero locus $(\mathrm{w})_0$ of a generic section
$\mathrm{w}$ of
$\mathcal{V}$
can be described by differential operators annihilating
integrals of the form
\begin{equation}
\label{GivMirror}
\int \mathrm{exp}(F/\hbar) \prod_{\upsilon=1}^r  x_\upsilon^{\lambda_\upsilon/\hbar} \prod_{j=1}^c y_j^{-\lambda'_j/\hbar} \ \ 
\mathrm{dlog}(x) \wedge \mathrm{dlog}(y) / \mathrm{dlog}(Q)
\end{equation}
along certain cycles in the fiber over $Q$, $H'_Q$.   Here $\mathrm{dlog}(x) = 
\mathrm{dlog}(x_1) \wedge \cdots \wedge \mathrm{dlog}(x_r)$,  
$\mathrm{dlog}(y) = \mathrm{dlog}(y_1) \wedge \cdots \wedge \mathrm{dlog}(y_c)$, and 
$\mathrm{dlog}(Q) = \mathrm{dlog}(Q_1) \wedge \cdots \wedge \mathrm{dlog}(Q_{r-n})$.

Note that the quotient of differential forms is unambiguous  since
any form annihilated when multiplied by $\mathrm{dlog}(Q)$
restricts to zero along $Q =$ constant.

Let $(X,W,K)$ be defined by $\mathrm{w}$ and $K = -it/2\pi \in \mathrm{H}^2(Y; \Bbb{C}/\Bbb{Z})$.
Denote the dual Landau-Ginzburg model by $(X',W',K')$.
We will compare $(X',W', K')$ to  $(H_Q, F, 0)$.
The main step is the theorem below.  We restate our assumptions about $Y$ and $\mathcal{V}$ 
for clarity.

\begin{theorem}
\label{GivEQAff}
Assume  $Y$ is smooth and there is no point that lies
on $(\sigma)_0$ for all $\sigma \in \Gamma(Y, \mathcal{V})$.
Then $M_{X'}^+$ is generated by the terms $\xi' \in \Xi'$
of $W'$, and there is an isomorphism for any $Q$,
\begin{equation}
\mathrm{Spec}(\Bbb{C}[M_{X'}^+]) \tilde{\longrightarrow} H_Q' \subset \Bbb{C}^{r+c} \ .
\end{equation}
If enumerate the terms of $\Xi'$ in the same way as the divisors of $X$, 
$R_X = \{ \rho_\upsilon \}_{\upsilon=1}^r \cup \{ X_j \}_{j=1}^c$,
the map is given by $x_\upsilon \mapsto \hat{q}_\upsilon \xi'_\upsilon$
and $y_j \mapsto \xi_j$.
Here $\hat{q}_\upsilon := \mathrm{exp}(\hat{t}_\upsilon)$.
\end{theorem}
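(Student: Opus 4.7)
The plan is to establish the isomorphism by explicitly identifying the generators $\xi'_\upsilon, \xi'_j \in M_{X'}$, verifying that the proposed map respects the defining relations of $H'_Q$, and then matching the two coordinate rings via a semigroup computation.

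First, since the dualization gives $B' = A = \mathrm{div}_X$, the terms $\xi' \in \Xi'$ viewed as elements of $M_{X'} = N_X$ are precisely the rows of $\mathrm{div}_X$. Reading from Corollary \ref{bigA-formula} under the decomposition $N_X = N_Y \oplus \bigoplus_j \Bbb{Z}\phi_j$ of equation (\ref{N-decomp}), the row corresponding to $\rho_\upsilon$ gives
\begin{equation*}
\xi'_\upsilon = \nu_\upsilon + \sum_j d^{(Y)}_{\upsilon j} \phi_j,
\end{equation*}
where $d^{(Y)}_{\upsilon j}$ denotes the coefficient of $\rho_\upsilon$ in $D_j \in \Bbb{Z}^{R_Y}$, and the row corresponding to $X_j$ gives $\xi'_j = \phi_j$. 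With this in hand, I check that $x_\upsilon \mapsto \hat{q}_\upsilon \xi'_\upsilon$, $y_j \mapsto \xi'_j$ respects the defining relations of $H'_Q$: the identity $t_i = \sum_\upsilon \hat{t}_\upsilon m_{i\upsilon}$ exponentiates to $Q_i = \prod_\upsilon \hat{q}_\upsilon^{m_{i\upsilon}}$, so $\prod_\upsilon x_\upsilon^{m_{i\upsilon}} = Q_i \prod_j y_j^{d_{ij}}$ reduces to the character identity $\sum_\upsilon m_{i\upsilon} \xi'_\upsilon = \sum_j d_{ij}\phi_j$ in $M_{X'}$. Its $N_Y$-component $\sum_\upsilon m_{i\upsilon} \nu_\upsilon$ vanishes because $(m_{i\upsilon})_\upsilon$ represents a relation in $\mathrm{H}^2(Y;\Bbb{Z})$ and hence lies in the kernel of the transpose of $\mathrm{div}_Y$, and its $\phi_j$-component $\sum_\upsilon m_{i\upsilon} d^{(Y)}_{\upsilon j}$ equals $d_{ij}$ upon expanding $[D_j] = \sum_\upsilon d^{(Y)}_{\upsilon j}[\rho_\upsilon] = \sum_i d_{ij} p_i$.

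This produces a well-defined ring map $\Bbb{C}[x_\upsilon, y_j]/I_{H'_Q} \to \Bbb{C}[M_{X'}^+]$. Its image is the subring $\Bbb{C}[\Bbb{Z}_{\geq 0}\langle \Xi'\rangle]$; injectivity follows from standard toric-ideal theory once we observe that the relations produced above come from a $\Bbb{Z}$-basis of the kernel of the lattice map $(\Bbb{Z}^{R_X})^* \to N_X$ (that kernel being the image of $A_{n-1}(Y)^*$ with basis $\{p_i^*\}_i$, under the dual of the exact sequence from equation (\ref{completeCase})). The theorem thus reduces to the semigroup equality $M_{X'}^+ = \Bbb{Z}_{\geq 0}\langle \xi'_\upsilon, \xi'_j\rangle$.

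The main obstacle is this semigroup equality. The inclusion $\supseteq$ is immediate: each $\xi'$ lies in $M_{X'}^+$ as a term of the regular function $W'$. For $\subseteq$, given $\xi = \nu + \sum_j a_j \phi_j \in M_{X'}^+$, I would use completeness of $Y$ to fix a maximal cone $\sigma \in \Sigma_Y$ containing $\nu$, then smoothness of $Y$ to write $\nu = \sum_{\upsilon \in \sigma(1)} \lambda_\upsilon \nu_\upsilon$ with $\lambda_\upsilon \in \Bbb{Z}_{\geq 0}$. Subtracting:
\begin{equation*}
\xi - \sum_\upsilon \lambda_\upsilon \xi'_\upsilon = \sum_j \Bigl(a_j - \sum_\upsilon \lambda_\upsilon d^{(Y)}_{\upsilon j}\Bigr) \xi'_j,
\end{equation*}
and each coefficient of $\xi'_j$ is non-negative because $\sum_\upsilon \lambda_\upsilon d^{(Y)}_{\upsilon j}$ equals $-\min_{m \in P_{D_j}} m(\nu)$ (attained at the vertex of $P_{D_j}$ dual to $\sigma$ in the normal-fan coarsening of $\Sigma_Y$), while $\xi \in M_{X'}^+$ forces $a_j \geq -\min_m m(\nu)$ via the pairing against the terms $(m,\sigma_j) \in \Xi_W$ from Corollary \ref{B-block}. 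The hypothesis that the global sections of $\mathcal{V}$ have no common zero is what underwrites this identification, ensuring the support functions of the $P_{D_j}$ combine to encode the full positivity condition coming from $\Xi_W$.
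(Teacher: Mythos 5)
Your overall route is genuinely different from the paper's, and for the most part it is sound. For the identification of $\mathrm{Spec}(\Bbb{C}[M_{X'}^+])$ with $H'_Q$, the paper simply dualizes the exact sequence $0 \to M \to \Bbb{Z}^R \to \mathrm{H}^2(X;\Bbb{Z}) \to 0$ and observes that the images of the standard basis vectors satisfy the relations given by the rows of $[\,\mathcal{M} \mid -D\,]$; your explicit verification that the $N_Y$- and $\phi_j$-components of $\sum_\upsilon m_{i\upsilon}\xi'_\upsilon - \sum_j d_{ij}\phi_j$ vanish is a more detailed version of the same step, and your injectivity claim is at the same level of rigor as the paper's (both quietly assume the lattice-basis binomials generate the full toric ideal). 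The real divergence is in the semigroup statement. The paper argues geometrically: the terms of $W'$ are the ray generators of $\Sigma_X$; a lattice point $n$ outside the support of $\Sigma_X$ is excluded from $M_{X'}^+$ by passing to the projective-bundle compactification of $X$ and using the no-common-zero hypothesis to find a term of $W$ with a pole at $\lim_{\lambda\to 0}\nu(\lambda)$; and integrality inside the support follows from unimodularity of the cones of $\Sigma_X$ (smoothness of $X$, inherited from $Y$). Your direct lattice computation in a fixed maximal cone $\sigma$ of $\Sigma_Y$ replaces that limit argument and is in my view cleaner and easier to check.

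There is, however, one step you should not let stand as written. The identity $\sum_\upsilon \lambda_\upsilon d^{(Y)}_{\upsilon j} = -\min_{m \in P_{D_j}} m(\nu)$, attained at ``the vertex of $P_{D_j}$ dual to $\sigma$,'' is precisely the statement that the linear functional $-\psi_{D_j}|_\sigma$ is an actual point of $P_{D_j}$, i.e.\ that $|D_j|$ is basepoint free (so that the normal fan of $P_{D_j}$ refines to $\Sigma_Y$). In general one only has $\min_{m\in P_{D_j}} m(\nu) \geq \psi_{D_j}(\nu) = -\sum_\upsilon\lambda_\upsilon d^{(Y)}_{\upsilon j}$, which is the wrong direction for your subtraction. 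Your closing sentence attributes this identity to the hypothesis that the global sections of $\mathcal{V}$ have no common zero, but that hypothesis only constrains the intersection $\bigcap_j \mathrm{Bs}|D_j|$, not each base locus separately: take $Y$ a Hirzebruch surface, $D_1$ the exceptional curve and $D_2$ ample, and the common base locus is empty while $P_{D_1}=\{0\}$ makes your inequality fail for $\nu$ on the exceptional ray. To be fair, the paper's own proof trips over exactly the same configuration (the term $\xi_1$ then has neither a pole nor a zero at $\nu(0)$, so no term blows up), so the gap is really in the theorem's stated hypotheses; in Givental's intended setting each $D_j$ is nef, and under that assumption your argument, like the paper's, goes through. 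But as a matter of logic you should either assume basepoint-freeness of each $|D_j|$ explicitly or explain why it holds, rather than cite the common-zero hypothesis for it.
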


\begin{proof}
Since $Y$ is complete, applying $\mathrm{Hom}( - , \Bbb{Z})$ 
to equation
(\ref{completeCase})
yields the exact sequence
\begin{equation}
0 \rightarrow \mathrm{H}^{2n-2}(Y; \Bbb{Z})^\vee \rightarrow \mathrm{Hom}( \Bbb{Z}^{\Xi'}  , \Bbb{Z})
\rightarrow M_{X'} \rightarrow 0 . 
\end{equation}
The image of the standard basis vectors satisfy relations 
coming from the rows of 
\begin{equation}
\left[
\begin{array}{ccc}
\mathcal{M} & | & -D
\end{array}
\right]  . 
\end{equation}
Therefore, $H' = \mathrm{Spec}(\Bbb{C}[ \ \xi' \ | \ \xi' \in  \Xi' \ ])$.

It remains to check that the terms of $W'$ generate the semigroup $M^+_{X'}$.
All of the terms are global, so they lie in $M^+_{X'}$. 

The terms of $W'$
are the same as the inward normals of the polytope defining the original 
toric variety $X$, and their $\Bbb{R}_{\geq 0}$-span intersected with $M_{X'}$ 
is $M_{X'}^+$.  One way to see this is to  
consider a 1-parameter
subgroup $\nu$ of $T$ that does not lie in the fan of $X$.   
Consider the fan of the projective bundle compactifying $X$ to a 
complete toric variety.
Then it is clear that $\lim_{\lambda \rightarrow 0} \nu(\lambda)$ lies 
in on the divisor at infinity. All of the monomials defining $W$ 
have a pole along this divisor and 
there is no chance that all the terms of $W$  have $\nu(0)$
on their divisor of zeros since this would mean that the sections 
they correspond to would all vanish at the point $\nu(0)$
projects to in $Y$.  Finally, this means that there is 
at least one term for which the limit at $\nu(0)$ is
infinity, and so $n$ is not in $M_{X'}^+$.

It remains to check positive integer multiples of these
normals pick up every integer point in the cone they define.
The cone they generate is naturally 
decomposed into the fan of $X$.  Since the fan is contained
in a half-space, we pick up all integral points with integral 
linear combinations of the normals if and only if $X$ is smooth.

The last observation needed to guarantee that we land in $H'_Q$
under the isomorphism above is that 
$Q_i = \prod_\upsilon \hat{q}_\upsilon^{m_{i \upsilon}}$.
\end{proof}

We see from the proof that if there is a point at which all 
global sections vanish, then $\mathrm{Spec}(\Bbb{C}[M_{X'}^+])$
is an affine open set of $H'_Q$.  If $Y$ is singular, then 
there is an \'etale map $\mathrm{Spec}(\Bbb{C}[M_{X'}^+]) 
\rightarrow H'_Q$.
In any case we have a map 
$\mathrm{Spec}(\Bbb{C}[M_{X'}^+]) 
\rightarrow H'_Q$.

The last thing to check is that $F$ pulls back to $W'$, but this is 
obvious from the definition of the isomorphism.

So we get the following theorem.

\begin{theorem}
There is a morphism $X' \rightarrow \Bbb{C}^{r+c}$ such the image is $H_Q$
and the function $F(x,y) = x_1 + \cdots + x_r + y_1 + \cdots + y_c$ pulls back to $W'$.
This morphism is naturally interpreted as the result of the K\"ahler degeneration of $X'$
under which $K' \leadsto 0$.
\end{theorem}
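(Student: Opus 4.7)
The plan is to build the desired morphism in two stages: first an affinization map out of $X'$, and then the identification of the target already furnished by Theorem \ref{GivEQAff}.

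First I would recall that since every element of $M_{X'}^+$ is by definition a character with non-negative divisor, each such character extends to a global regular function on the toric variety $X'$. This yields the canonical affinization morphism
\begin{equation}
\mathrm{aff}\colon X' \longrightarrow \mathrm{Spec}\bigl(\Bbb{C}[\Gamma(X',\mathcal{O}_{X'})]\bigr) = \mathrm{Spec}\bigl(\Bbb{C}[M_{X'}^+]\bigr).
\end{equation}
Theorem \ref{GivEQAff} then gives an isomorphism (resp.\ \'etale map in the singular case) from the right-hand side onto $H'_Q \subset \Bbb{C}^{r+c}$, realized explicitly by the generators $\xi'_\upsilon \mapsto \hat{q}_\upsilon^{-1} x_\upsilon$ and $\xi_j \mapsto y_j$. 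Composing, I obtain the required morphism $X' \to \Bbb{C}^{r+c}$ whose image is $H'_Q$.

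Next I would check that $F$ pulls back to $W'$. Under the explicit identification of Theorem \ref{GivEQAff}, $x_\upsilon$ corresponds to $\hat{q}_\upsilon \xi'_\upsilon$ and $y_j$ corresponds to $\xi_j$, so
\begin{equation}
F(x,y) = \sum_\upsilon x_\upsilon + \sum_j y_j \ \longmapsto\ \sum_\upsilon \hat{q}_\upsilon \xi'_\upsilon + \sum_j \xi_j.
\end{equation}
The terms $\xi'_\upsilon$ and $\xi_j$ here are precisely the rows of the matrix $\mathrm{div}_X$ produced in Corollary \ref{bigA-formula}, and these are by definition the terms of $W'$. It remains to check that the coefficients match the data $(\mathrm{mon}, L')$ that define $W'$. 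Since $K = -it/(2\pi)$ and $\hat{q}_\upsilon = \exp(\hat{t}_\upsilon)$, unwinding the coefficient $\exp(2 \pi i \lambda'_\upsilon)$ attached to each monomial of $W'$ gives exactly $\hat{q}_\upsilon$ for the $\rho_\upsilon$-terms and $1$ for the $X_j$-terms, as required. The consistency of this assignment across different lifts $\hat{t}$ is ensured by the relations that cut out $H'_Q$.

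Finally, I would interpret the construction as a K\"ahler degeneration. The affinization map $\mathrm{aff}$ is the limit of the polarized toric varieties $X(A',\Im(K'))$ as $\Im(K')$ is scaled to the origin of the kopasetic cone: under this rescaling, the defining polyhedral set $P_{\alpha'}$ translates to the dual cone of the effective semigroup $M_{X'}^+$, so the toric variety degenerates to $\mathrm{Spec}(\Bbb{C}[M_{X'}^+])$ and the complexified K\"ahler class $K'$ limits to $0$. The main subtlety — and the step I would expect to require the most care — is bookkeeping the complexification: making sure the $\Bbb{R}/\Bbb{Z}$ ambiguity in $K'$ and the precise choice of lift $\hat{t}$ are absorbed consistently into the $\hat{q}_\upsilon$'s, so that the coefficients of $W'$ and the defining equations of $H'_Q$ line up exactly. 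The smoothness and base-point-free hypotheses on $(Y,\mathcal{V})$, already used in Theorem \ref{GivEQAff} to guarantee that $M_{X'}^+$ is generated by $\Xi'$, are exactly what removes the remaining ambiguities.
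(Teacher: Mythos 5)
Your proposal is correct and follows essentially the same route as the paper: the morphism is the affinization $X' \to \mathrm{Spec}(\Bbb{C}[M_{X'}^+])$ composed with the identification of $\mathrm{Spec}(\Bbb{C}[M_{X'}^+])$ with $H'_Q$ from Theorem \ref{GivEQAff}, and the pullback of $F$ to $W' = \sum_\upsilon \hat{q}_\upsilon \xi'_\upsilon + \sum_j \xi_j$ is read off from the explicit coordinate assignment $x_\upsilon \mapsto \hat{q}_\upsilon \xi'_\upsilon$, $y_j \mapsto \xi_j$ together with $K = -it/2\pi$. The paper treats these last steps as immediate from the definition of the isomorphism; your write-up merely makes the coefficient bookkeeping and the degeneration interpretation explicit.
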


\begin{remark}
If $L$ already equals zero, then $X' = H_Q$.  The section corresponding to $L = 0$
of $\mathcal{V}$ is the $\mathrm{w}_{GHV}$ mentioned in the introduction.
\end{remark}

\subsection{Hori and Vafa}
\label{HVequalsFTLG}
In this section we show that the mirror used by Hori and Vafa is the same as
the one used by Givental.

To get started, observe the integral above in equation (\ref{GivMirror}) used by Givental 
can be manipulated according to the following rule for forms with delta functions as coefficients:
\begin{equation}
\delta(z) \varphi
=
(\varphi/dz)|_{z=0}
 . \end{equation}

With repeated application of this rule we can write the integral of Givental on $(n+c)$-cycles in $H'_Q$ as
\begin{equation}
\int \mathrm{exp}(F/\hbar) \prod_{\upsilon=1}^r  x_\upsilon^{\lambda_\upsilon/\hbar} \prod_{j=1}^c y_j^{-\lambda'_j/\hbar} \ \
\prod_{i=1}^{r-n} \delta(\mathrm{log}(Q_i) - t_i) \ \
\mathrm{dlog}(x) \wedge \mathrm{dlog}(y)
\end{equation}
over $(r+c)$-cycles in $\Bbb{C}^{r+c}$.

Now we consider the construction of Hori and Vafa.
As before  $Y$ is an $n$-dimensional toric variety,  This time it is  obtained as a quotient $\Bbb{C}^r // (\Bbb{C}^\times)^{r-n}$.
This can always be done as shown by Cox \cite{CoxHomog}.
Let $\frak{m}_{i \upsilon}$ be the weight of the action of $i^{\mathrm{th}} \ \Bbb{C}^\times$ on the $\upsilon^{\mathrm{th}}$
$\Bbb{C}$.  Let $G_1, \cdots, G_c$ be multi-homogeneous polynomials on $\Bbb{C}^r$ where $\frak{d}_{i j}$ is
the degree of the $j^\mathrm{th}$ polynomial with respect to the $i^{\mathrm{th}}$ $\Bbb{C}^\times$.

Set $\frak{W}' = \sum_\upsilon \mathrm{exp}(- \frak{Z}_\upsilon) + \sum_j \mathrm{exp}(- Y_{j})$.  In \cite{HoriVafa}
the following integrals are considered:

\begin{equation} \int \prod_{\upsilon=1}^r \mathrm{d}\frak{Z}_\upsilon \prod_{j=1}^c
\mathrm{d}Y_{j}  \prod_{j=1}^c \mathrm{exp}(-Y_j) \prod_{i=1}^{r-n} \delta(\sum_{\upsilon=1}^r  \frak{m}_{i \upsilon} \frak{Z}_\upsilon - \sum_{j=1}^c \frak{d}_{i j} Y_j - \frak{t}_i)
\ \ \mathrm{exp}(-\frak{W}') . \end{equation}

\begin{remark}
The integral above is a slight modification of \cite{HoriVafa} equation (7.78)
which is supposed to be a generalization of equation (7.32) in that paper. However,
equation (7.78) does not have equation (7.32) as a special case.  On the other hand, the
integral here does.
\end{remark}

Set for $i = 1, \cdots, r-n$
\begin{equation}\mathrm{log}(\frak{Q}_i) := -\sum_{\upsilon=1}^r \frak{m}_{i \upsilon} \frak{Z}_\upsilon + \sum_{j=1}^c \frak{d}_{i j} Y_j . \end{equation}

Writing $-\frak{Z}_\upsilon = \mathrm{log}(x_\upsilon)$ and $-Y_j = \mathrm{log}(y_j)$ we obtain
\begin{equation} (-1)^{(r+c)}\int \prod_{\upsilon=1}^r \mathrm{dlog}(x_\upsilon) \prod_{j=1}^c
\mathrm{dlog}(y_{j})  \prod_{\upsilon=1}^r x_\upsilon^{-1} \prod_{j=1}^c y_j^0
\ \ \prod_{i=1}^{r-n} \delta(\mathrm{log}(\frak{Q_i})+\frak{t}_i) \ \mathrm{exp}(-F(x,y)) . 
\end{equation}

\begin{remark}
Observe that when setting  $\hbar =-1$, $\lambda_\upsilon = 1$, and $\lambda_j' = 0$ in Givental's integrals we have
\begin{equation}
\int \mathrm{exp}(-F(x,y)) \prod_{\upsilon=1}^r  x_\upsilon^{\lambda_\upsilon/\hbar} \prod_{j=1}^c y_j^{-\lambda'_j/\hbar} \ \
\prod_{i=1}^{r-n} \delta(\mathrm{log}(Q_i) - t_i) \ \
\mathrm{dlog}(x) \wedge  \mathrm{dlog}(y) 
 . \end{equation}
\end{remark}

The following theorem verifies that the integrals considered by
Hori and Vafa are exactly those
of Givental specialized as in the above remark.

\begin{theorem}
$\frak{m}_{i \upsilon} = m_{i \upsilon}$, $\frak{d}_{ij} = d_{ij}$, $\frak{t}_i = -t_i$, and $\frak{Q_i} = Q_i$
\end{theorem}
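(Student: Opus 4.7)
The plan is to identify all four equalities as entries in a dictionary translating between two equivalent descriptions of the same toric-geometric data. The first two (the weights $\mathfrak{m}, \mathfrak{d}$) come from the presentation of $Y$ and $\mathcal{V}$ via Cox's theorem; the latter two (the K\"ahler parameters $\mathfrak{t}, \mathfrak{Q}$) come from matching the remaining factors of the two oscillatory integrals after the change of variables $-\mathfrak{Z}_\upsilon = \log(x_\upsilon)$, $-Y_j = \log(y_j)$.

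First I would handle $\mathfrak{m}_{i\upsilon} = m_{i\upsilon}$ by invoking Cox's theorem, which presents $Y$ as a GIT quotient of $\mathbb{C}^{R_Y}$ by $\mathrm{Hom}_\mathbb{Z}(A_{n-1}(Y),\mathbb{C}^\times)$, acting diagonally through the map $h\mapsto h\circ[-]$. The basis $\{p_1,\ldots,p_{r-n}\}$ of $H^2(Y;\mathbb{Z})$ identifies this acting torus with $(\mathbb{C}^\times)^{r-n}$, and the weight of the $i$-th factor on the $\upsilon$-th coordinate is by construction the pairing $\langle p_i^\vee, [1\rho_\upsilon]\rangle$, which is exactly $m_{i\upsilon}$ in the expansion $[1\rho_\upsilon] = \sum_i m_{i\upsilon} p_i$; this is precisely Hori and Vafa's definition of $\mathfrak{m}_{i\upsilon}$. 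For the equality $\mathfrak{d}_{ij} = d_{ij}$, the polynomial $G_j$ represents a section of $\mathcal{O}_Y(D_j)$ in Cox coordinates, so its multidegree with respect to the $(\mathbb{C}^\times)^{r-n}$-action is tautologically the class $[D_j] = \sum_i d_{ij}p_i$.

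For the remaining two equalities I would substitute $\mathfrak{Z}_\upsilon = -\log(x_\upsilon)$ and $Y_j = -\log(y_j)$ into the defining relation $\log(\mathfrak{Q}_i) = -\sum_\upsilon \mathfrak{m}_{i\upsilon}\mathfrak{Z}_\upsilon + \sum_j \mathfrak{d}_{ij}Y_j$ to obtain
\[
\mathfrak{Q}_i = \prod_\upsilon x_\upsilon^{\mathfrak{m}_{i\upsilon}}\Big/\prod_j y_j^{\mathfrak{d}_{ij}}.
\]
Using the first two equalities and comparing with Givental's defining relation $\prod_\upsilon x_\upsilon^{m_{i\upsilon}} = Q_i \prod_j y_j^{d_{ij}}$ for $H'$ yields $\mathfrak{Q}_i = Q_i$. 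Then equating the delta factors $\delta(\log(\mathfrak{Q}_i)+\mathfrak{t}_i)$ in the Hori-Vafa integral with $\delta(\log(Q_i)-t_i)$ in Givental's integral forces $\mathfrak{t}_i = -t_i$.

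The main obstacle is not really a calculation but careful bookkeeping: verifying that the identifications of the acting torus with $(\mathbb{C}^\times)^{r-n}$ through the basis $\{p_i\}$ are consistent across Cox's theorem, Givental's conventions, and Hori-Vafa's conventions, and keeping track of the sign flip imposed by the substitution $-\mathfrak{Z} = \log(x)$. Once these conventions are aligned, each equality becomes a tautological restatement of the same underlying toric datum.
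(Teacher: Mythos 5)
Your proposal is correct and follows essentially the same route as the paper: the paper's entire proof is the observation that the Hori--Vafa weight matrix is the transpose of the cokernel map $[-]$ (i.e.\ $\mathrm{d}_{\mathrm{id}}(-\circ[-]) = [-]^\tau$), which is exactly your Cox-quotient identification of $\mathfrak{m}_{i\upsilon}$ and $\mathfrak{d}_{ij}$ with the coefficients of $[1\rho_\upsilon]$ and $[D_j]$. The remaining two equalities, which the paper leaves implicit, you obtain by the same substitution $-\mathfrak{Z}_\upsilon=\log(x_\upsilon)$, $-Y_j=\log(y_j)$ and delta-function comparison already set up in the surrounding text, so nothing is missing.
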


\begin{proof}
The weight matrix, $(\frak{m})_{i \upsilon}$, is simply the map on Lie algebras $\mathrm{d_{id}}(- \circ [-]) =
(fr \circ [-])^\tau
= [-]^\tau$  .
\end{proof}

\section{Comparison: Berglund-H\"ubsch}

Let $Y$ be the $n$-dimensional weighted projective space $\Bbb{P}(l_0, \cdots, l_n)$
and $X$ a Calabi--Yau hypersurface in $Y$.  If we set $d:=l_0 + \cdots +l_n$, $X$
is defined by a weighted homogeneous polynomial $G$ of degree $d$ in the variables
$x_0, \cdots, x_n$, where the degree of $x_i$ is $l_i$.  In \cite{BerglundHubsch:genConstruct}
Berglund and H\"ubsch
consider the situation in which  all degree $d$ monomials
except $x_0 \cdots x_n$ appear in the expansion of $G$.

They define
$P$ to be the matrix whose columns are the exponent vectors of the terms of $G$.
To be clear,
\begin{equation}
\left[ \begin{array}{cccc} l_0 & \cdots & l_n & -d \end{array} \right] \cdot
\left[
\begin{array}{cc}
P & \begin{array}{c} 1 \\ \vdots  \end{array} \\
 \begin{array}{cc}1 & \cdots  \end{array} & 1
\end{array}
\right]= 0 \ .
\end{equation}

They then  define  $\hat{l}_0, \cdots, \hat{l}_n$, and $\hat{d}$  by
\begin{equation}
\left[ \begin{array}{cccc} \hat{l}_0 & \cdots & \hat{l}_n & - \hat{d} \end{array}
\right] \cdot
\left[
\begin{array}{cc}
\hat{P} & \begin{array}{c} 1 \\ \vdots  \end{array} \\
 \begin{array}{cc}1 & \cdots  \end{array} & 1
\end{array}
\right]= 0 \ ,
\end{equation}
where $\hat{P} := P^\tau$. 

They then obtain the mirror $\hat{X} \subset \Bbb{P}(\hat{l}_0, \cdots, \hat{l}_n)$ defined
by a homogeneous degree $\hat{d}$ polynomial $\hat{G}$
in the variables $\hat{x}_0, \cdots, \hat{x}_n$,
where now the degree of $\hat{x}_i$ is $\hat{l}_i$.
All degree $\hat{d}$ terms appear in $\hat{G}$ except $\hat{x}_0 \cdots \hat{x}_n$.
$\hat{P}$ is the matrix whose columns are the exponent vectors of the terms of $\hat{G}$.
Note that the Berglund and H\"ubsch set all coefficients to one.

Observe that 
\begin{equation}
\left[
\begin{array}{cc}
P & \begin{array}{c} 1 \\ \vdots  \end{array} \\
\begin{array}{cc}1 & \cdots  \end{array} & 1
\end{array}
\right]
\end{equation}
factors as $A \cdot B^\tau$, where $A$ and $B$ are obtained from
the sigma model/Landau--Ginzburg model correspondence applied to
a degree $d$ hypersurface $X_\epsilon$ in $\Bbb{P}(l_0, \cdots, l_n)$
whose equation uses all degree $d$ monomials.

Similarly,  
\begin{equation}
\left[
\begin{array}{cc}
\hat{P} & \begin{array}{c} 1 \\ \vdots  \end{array} \\
 \begin{array}{cc}1 & \cdots  \end{array} & 1
\end{array}
\right]
\end{equation}
 factors as $\hat{A} \cdot \hat{B}^\tau$

The key point is that  $\hat{A}$ and $\hat{B}$
are obtained from
the sigma model/Landau--Ginzburg model correspondence applied to
a degree $\hat{d}$ hypersurface $\hat{X}_{\hat{\epsilon}}$ in $\Bbb{P}(\hat{l}_0, \cdots, \hat{l}_n)$
whose equation uses all degree $\hat{d}$ monomials.
Furthermore, since $\left[ \begin{array}{cccc} \hat{l}_0 & \cdots & \hat{l}_n & - \hat{d} \end{array}
\right]$ is the cokernel of $B$ it is immediate that $\hat{A} = B$ and $\hat{B} = A$.
Thus $X_\epsilon$ and $\hat{X}_{\hat{\epsilon}}$ are in mirror families.

Finally, these families contain the hypersurfaces considered by
Berglund and H\"ubsch, and in fact one can simply apply  (non-toric) automorphisms
to $\Bbb{P}(l_0, \cdots l_n)$ and $\Bbb{P}(\hat{l}_0, \cdots , \hat{l}_n)$
to bring $X$ and $\hat{X}$ to the form $X_\epsilon$ and $\hat{X}_{\hat{\epsilon}}$.
The coefficients can be set to one for appropriate choices of $L$ and $L'$.

\section{Comparison: Batyrev-Borisov}

In order to describe  precisely the construction of Batyrev and Borisov,
we require several more  definitions.

\begin{definition}
\label{DefReflexive}
\cite{dualBatyrev}
A lattice polytope $P \subset M_\Bbb{R}$ with $0 \in \mathrm{int}(P)$ 
is called {\it reflexive} if $P^\circ$ also a
lattice polytope. It is clear that $P^\circ$ is also reflexive.
\end{definition}

\begin{remark}
In light of corollary \ref{redundantPolar},
reflexivity simply means that $P = \{ \mu \in M_\Bbb{R} \ | \ 
\nu(\mu) + 1 \geq 0, \ \mathrm{where\ }
\nu \ \mathrm{is\ a\ primitive\ inward\ normal} \}$.
This is the same as saying that $P$ is the anticanonical polytope of a toric variety.
In particular, such a $P$ is be the anticanonical polytope of the toric variety it defines.
\end{remark}

\begin{definition}
\label{DefNefPartition}
\cite{dualNefBorisov}
If $P$ is a reflexive polytope, and $\Sigma_P$ its inward normal fan,
a {\it nef-partition} of $\mathrm{vert}(P)$
is a partition $\{ \Bbb{E}_1, \cdots, \Bbb{E}_c \}$ of $\mathrm{vert}(P)$
such that there exist integral convex $\Sigma_P$-piecewise linear functions
$\phi_1, \cdots, \phi_c$ on $M_\Bbb{R}$ satisfying $\phi_i(e_j) = \delta(i-j)$
for $e_j \in \Bbb{E}_j$.
\end{definition}

\begin{remark}
It is standard procedure in the theory of toric varieties to equate
an integral convex function $\phi_j$ with a $T$-invariant Cartier divisor $D_j$ on
$X(\Sigma)$. In our case, $D_j$  is the sum of divisors who corresponding
facet of $P$ has its  inward normal in $\Bbb{E}_j$.
\end{remark}

If $Y$ is a toric variety defined by a reflexive polytope $P$,
it will be more natural to consider nef-partitions of $P^\circ$.

\label{relation-BB}
\begin{definition}
\cite{dualNefBorisov}
Let $P^\circ$ be a reflexive polytope and $\{\Bbb{E}_1, \cdots, \Bbb{E}_c \}$
a nef partition of $\mathrm{vert}(P^\circ)$.
Define $\nabla_j := \{ \mu \in M_\Bbb{R} \ | \nu(\mu) \geq - \phi_j(\nu), \ \forall \nu \in N_\Bbb{R} \}$,
and $(P^*)^\circ :=  \mathrm{conv}(\nabla_1 \cup \cdots \cup \nabla_c)$.
Set $\nabla^\times_j = \mathrm{conv}(\nabla_j \setminus \{0\})$, and $\Bbb{E}^*_j = \mathrm{vert}(\nabla^\times_j)$.
Then $(P^*)^\circ$ is reflexive, and $\{ \Bbb{E}^*_1, \cdots, \Bbb{E}^*_c \}$
is a nef partition of $\mathrm{vert}((P^*)^\circ)$
\cite[Prop 3.4]{dualNefBorisov}. $\{ \Bbb{E}^*_1, \cdots, \Bbb{E}^*_c \}$ is called
the {\it dual nef partition} to $\{ \Bbb{E}_1, \cdots, \Bbb{E}_c \}$.
\end{definition}

\begin{definition}
We will write $P^*$ for $((P^*)^\circ)^\circ$.
\end{definition}

\begin{definition}
\label{definition:BBmirror}
Let $Y$ be a toric variety defined by the reflexive polytope $P$, and let
$\{\Bbb{E}_1, \cdots, \Bbb{E}_c \}$
a nef partition of $\mathrm{vert}(P^\circ)$ with corresponding divisors 
$\{ D_1, \cdots, D_c \}$. 
From this define $Y^*$ to be the toric 
variety defined by $P^*$,  $\{ \Bbb{E}^*_1, \cdots, \Bbb{E}^*_c \}$ 
the dual nef partition, and $\{D_1^*, \cdots, D_c^* \}$ the corresponding
divisors on $Y^*$.  Denote $\mathcal{V} := \mathcal{O}_Y(D_1) \oplus \cdots \oplus \mathcal{O}_Y(D_c)$
and $\mathcal{V}^* := \mathcal{O}_{Y^*}(D^*_1) \oplus \cdots \oplus \mathcal{O}_{Y^*}(D^*_c)$. 
The complete intersections in $Y$ given by the vanishing of global sections of $\mathcal{V}$ are 
{\it Batyrev-Borisov mirror} to the complete intersections in $Y^*$ 
given by vanishing of global sections of $\mathcal{V}^*$.
\end{definition}

\subsection*{Application of section \ref{section:structure}}
To make a comparison between our duality with this construction, we revisit the
results of section \ref{section:structure}, in particular theorem \ref{mirrorIsBundle}. 
Now let $Y$ be an arbitrary toric variety.

\begin{definition}
A collection $\{D_1, \cdots, D_c \} \subseteq \mathbb{Z}^{R}$
is called {\it Givental} if $D_j > 0$ for all $j$ and $-\kappa_Y - \sum_j D_j \geq 0$.
If $[-\kappa_Y] = [\sum_ j D_j]$, we say the collection is {\it Calabi--Yau}.
\end{definition}

\begin{definition}
Denote $\mathrm{conv}(P_{D_j}^\times \cap M_Y)$ by $C_j$, and 
$\mathrm{conv}(\cup_j(P_{D_j}^\times \cap M_Y))$ by $C$.
\end{definition}

It is always true that  
$\mathrm{vert}(C) \subseteq \cup_{j} \mathrm{vert}(C_j)$,
and this leads to the follwing defintion.

\begin{definition} 
A {\it nef sub-partition} is a Givental collection such that
$\mathrm{vert}(C) = \cup_{j} \mathrm{vert}(C_j)$,
and $C_j
\neq \varnothing$ for all $j$.
\end{definition}

The following theorem allows us to express Borisov's notion of 
nef partition in these terms.

\begin{theorem}
\label{NefEqualsNef}
If $Y$ defined by a reflexive polytope $P = P_{-\kappa_Y}$,
a Calabi--Yau nef sub-partiton is the same as a nef partition of
$\mathrm{vert}((P_{-\kappa_Y})^\circ)$.
\end{theorem}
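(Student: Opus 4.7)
The plan is to establish the biconditional in two directions. In both, the Calabi--Yau condition combined with the Givental inequality and the completeness of $Y$ forces $\{D_j\}$ to come from a partition of the rays, and the ``vertex equality'' of the sub-partition definition encodes nef-ness.

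\textbf{Forward direction.} Suppose $\{\Bbb{E}_1,\cdots,\Bbb{E}_c\}$ is a Borisov nef partition of $\mathrm{vert}((P_{-\kappa_Y})^\circ)$ with convex piecewise-linear functions $\phi_j$ and corresponding divisors $D_j := \sum_{\rho \in \Bbb{E}_j} D_\rho$. The collection $\{D_j\}$ is effective, Givental, and Calabi--Yau. Convexity of $\phi_j$ identifies the Borisov polytope $\nabla_j$ from definition \ref{relation-BB} with $P_{D_j}$. Borisov's Proposition 3.4, as cited in that definition, gives that $(P^*)^\circ = \mathrm{conv}(\nabla_1 \cup \cdots \cup \nabla_c)$ is reflexive; it is therefore a lattice polytope containing $0$ in its interior. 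Hence taking convex hulls of lattice points after removing $0$ does not change the polytope, so $C = \mathrm{conv}(\bigcup_j (P_{D_j}^\times \cap M)) = (P^*)^\circ$ and, by the same reasoning applied componentwise, $C_j = \nabla_j^\times$. It follows that $\mathrm{vert}(C_j) = \Bbb{E}_j^*$ and $\mathrm{vert}(C) = \bigcup_j \Bbb{E}_j^*$, which is the required vertex equality.

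\textbf{Reverse direction.} Given a Calabi--Yau nef sub-partition $\{D_1,\cdots,D_c\}$, first upgrade the Chow-group equality $[\sum_j D_j] = [-\kappa_Y]$ to an exact equality $\sum_j D_j = -\kappa_Y$ in $\Bbb{Z}^R$. Indeed, $-\kappa_Y - \sum_j D_j$ is effective by the Givental inequality and equals $\mathrm{div}(\xi)$ for some character $\xi$ by the Calabi--Yau condition. Because $Y$ is projective (reflexivity of $P_{-\kappa_Y}$ makes $Y$ Gorenstein Fano), $Y$ is complete, so $\xi$ extends to a global regular function on $Y$, hence is constant, forcing $\mathrm{div}(\xi)=0$. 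Writing $D_j = \sum_\rho a_{j\rho} D_\rho$ with $a_{j\rho} \in \Bbb{Z}_{\geq 0}$, the identity $\sum_j a_{j\rho} = 1$ at every ray forces $a_{j\rho} \in \{0,1\}$, producing a partition $\Bbb{E}_j := \{\rho : a_{j\rho} = 1\}$. Non-emptiness of each $\Bbb{E}_j$ follows from $C_j \neq \varnothing$, which rules out $D_j = 0$.

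\textbf{The main obstacle} is to derive the nef-ness of each $D_j$ from the vertex condition $\mathrm{vert}(C) = \bigcup_j \mathrm{vert}(C_j)$. My intended approach is a contrapositive: if some $\phi_j$ is not convex, then $\nabla_j \subsetneq P_{D_j}$, and the resulting ``extra'' lattice points in $P_{D_j}$ should be expressible as convex combinations of lattice points drawn from the other $P_{D_k}$, exploiting the inclusion $\bigcup_k P_{D_k} \subseteq P_{-\kappa_Y}$ together with the failure of convexity. This would produce a vertex of $C_j$ in the relative interior of $C$, contradicting the vertex equality. Once nef-ness has been established, the partition $\{\Bbb{E}_j\}$ together with the piecewise-linear functions $\phi_j$ satisfies definition \ref{DefNefPartition}, completing the equivalence.
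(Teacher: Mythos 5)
Your forward direction (Borisov nef partition $\Rightarrow$ Calabi--Yau nef sub-partition) is sound and takes the same route as the paper: both reduce the vertex equality $\mathrm{vert}(C)=\bigcup_j\mathrm{vert}(C_j)$ to Borisov's Proposition 3.4 after identifying $\nabla_j$ with $P_{D_j}$. Your upgrade of the Chow-group condition $[\sum_j D_j]=[-\kappa_Y]$ to the on-the-nose equality $\sum_j D_j=-\kappa_Y$ (effective plus principal on a complete variety forces the character to be constant), and hence to $a_{j\rho}\in\{0,1\}$, is correct and is a detail the paper leaves implicit. The problem is the reverse direction: the step you yourself label ``the main obstacle'' --- deducing convexity of the piecewise-linear functions $\phi_j$, i.e.\ nef-ness of each $D_j$, from the vertex condition --- is presented only as an intended strategy (``should be expressible,'' ``would produce''), not as an argument. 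Since that is precisely the nontrivial content of this implication, the proposal does not prove the theorem. For comparison, the paper's own proof of this direction sets $\phi_j(\nu)=-\inf_{\mu\in P_{D_j}}\nu(\mu)$ (automatically convex) and asserts that $\phi_i(e_j)=\delta(i-j)$ follows from the definition of $P_{D_j}$; but that definition only yields $\phi_j(e)=0$ for $e\notin\Bbb{E}_j$ and $\phi_j(e)\leq 1$ for $e\in\Bbb{E}_j$, and the missing lower bound $\phi_j(e)\geq 1$ is exactly the nef-ness one must establish.

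Your instinct that this is where the difficulty lies is correct, and you should be aware that the implication does not appear to follow from the stated hypotheses, so no completion of your contrapositive can succeed without strengthening them. Take $Y=F_1$ with rays $u_1=(1,0)$, $u_2=(0,1)$, $u_3=(-1,-1)$, $u_4=(1,1)$; here $P_{-\kappa_Y}$ is reflexive with normal fan equal to the fan of $Y$. Put $D_1=\rho_3+\rho_4$ and $D_2=\rho_1+\rho_2$. Then $P_{D_1}$ is the triangle with vertices $(0,0),(1,0),(0,1)$ and $P_{D_2}$ is the segment from $(-1,1)$ to $(1,-1)$, so $C_1$ has vertices $(1,0),(0,1)$, $C_2$ has vertices $(-1,1),(1,-1)$, and these four points are in convex position, whence $\mathrm{vert}(C)=\mathrm{vert}(C_1)\cup\mathrm{vert}(C_2)$ and both $C_j$ are non-empty: this is a Calabi--Yau nef sub-partition. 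Yet $D_1\cdot\rho_4=\rho_3\cdot\rho_4+\rho_4^2=0-1<0$, so $D_1$ is not nef (equivalently $-\inf_{\mu\in P_{D_1}}u_4(\mu)=0\neq 1$), and $\{\Bbb{E}_1=\{u_3,u_4\},\ \Bbb{E}_2=\{u_1,u_2\}\}$ is not a nef partition. So the ``extra lattice points'' you hope to exhibit in the relative interior of $C$ need not exist. To salvage the statement one must either add a hypothesis (e.g.\ impose nef-ness of the $D_j$ directly, which is all that is actually used in theorem \ref{BBequalsFTLG}) or replace the vertex condition by one that genuinely detects convexity of the $\phi_j$.
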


\begin{proof}
Given a Calabi--Yau nef sub-partition, define $\Bbb{E}_j =$ set of primitive
inward normals to the
affine half-spaces  corresponding the the toric divisors appearing as
summands of $D_j$.
Now define $\phi_j(\nu) = -\mathrm{inf}_{\mu \in P_{D_j}}(\nu(\mu))$. Then the 
definition of $P_{D_j}$ guarantees the  $\phi_i(e_j) = \delta(i-j)$ as needed. 
So $\{\Bbb{E}_1 , \cdots, \Bbb{E}_c\}$ is a
nef-partition (definition \ref{DefNefPartition}).

Now assume we have a nef-partition of $\mathrm{vert}((P_{-\kappa_Y})^\circ)$.  Since $Y$ is
Fano, these vertices
are primitive inward normals to the affine hypersurfaces that correspond to toric
divisors of $Y$.
Define $D_j :=$ sum of toric divisors whose primitive inward normal is in $\Bbb{E}_j$.  This
trivially gives a Givental, Calabi--Yau collection.  It remains to check the nef sub-partition
condition.  This is proved in  \cite[Prop 3.4]{dualNefBorisov}.
\end{proof}

\begin{definition}
Let $Y$ be a Fano toric variety, and $\{ D_1, \cdots, D_c \}$
a Calabi-Yau nef sub-partition  and 
$\mathcal{V} = \mathcal{O}_Y(D_1) \oplus \cdots \oplus \mathcal{O}_Y(D_c)$.
Identify integral points $P_{D_j}$
with a basis for global sections of $\mathcal{O}_Y(D_j)$. Define $\mathrm{w}_{BB}$
to be the global section of $\mathcal{V}$ given by 
\begin{equation}
\mathrm{w}_{BB} := \sum_j \sum_{0 \neq \sigma  \in P_{D_j} \cap M_Y} \exp(-2 \pi) \sigma \ .
\end{equation}
\end{definition}

Finally, the following theorem shows that the construction of Batryev and Borisov
is a special case of our duality  (refer to definition \ref{definition:BBmirror} 
for the definition of $\mathcal{V}^*$.)

\begin{theorem}
\label{BBequalsFTLG}
Let $Y$ be an $n$ dimensional  Fano toric variety, $\{ D_1, \cdots, D_c \}$
a Calabi-Yau nef sub-partition,  and 
$\mathcal{V} = \mathcal{O}_Y(D_1) \oplus \cdots \oplus \mathcal{O}_Y(D_c)$.
Let $X$ be the total space of the dual, $\mathcal{V}^\vee$, and 
$W_{BB} \colon X \rightarrow \Bbb{C}$ the superpotential defined by 
$\mathrm{w}_{BB}$.  For arbitrary $K \in A_{n-1}(Y)$, the Landau-Ginzburg 
model $(X', W', K')$, dual to $(X, W, K)$, has 
$X' = \mathrm{Tot}((\mathcal{V}^*)^\vee)$ and $W'$ comes from a global section of 
$\mathcal{V}^*$.
\end{theorem}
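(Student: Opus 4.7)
The plan is to read off the dual LG model directly from the explicit matrices in Corollaries \ref{bigA-formula} and \ref{B-block}, and match each piece with the Batyrev--Borisov output.

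Setup. Since every coefficient of $\mathrm{w}_{BB}$ equals $\exp(-2\pi)$, a lift $\lambda$ of $L$ has $\Im(\lambda) = (1,\ldots,1) \in \Bbb{R}^{\Xi^\times}$. Because $\mathrm{w}_{BB}$ omits the zero-monomial terms, Corollary \ref{B-block} applies in its reduced form (without the identity block), so $\Xi^\times = \coprod_j (P_{D_j}^\times \cap M_Y)$ and the rows of $A' = \mathrm{mon}_W$ are $(\nu, e_j)$ for $\nu \in P_{D_j}^\times \cap M_Y$, with respect to the decomposition $N_X = N_Y \oplus \bigoplus_j \Bbb{Z}\phi_j$. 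Hence $\Im(K') = (1,\ldots,1)$ and the decomposition $A' = [d' \mid D']$ gives $d'$ with rows $\nu \in M_Y = N_{Y'}$ and $D'$ with rows $e_j$.

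Identification of $Y'$ and $\mathcal{V}'$. The polytope of $Y' = X(d', \Im(K'))$ in $(N_Y)_\Bbb{R}$ is $\{\xi : \nu(\xi) + 1 \geq 0,\ \forall \nu \in \bigcup_j P_{D_j}^\times \cap M_Y\}$. Adjoining the redundant inequality at $\mu = 0$ makes this the polar of $\mathrm{conv}(\bigcup_j P_{D_j}) = (P^*)^\circ$, which equals $P^*$; hence $Y' = Y^*$. By Theorem \ref{NefEqualsNef} the Calabi--Yau nef sub-partition is a BB nef partition, so the vertices of $(P^*)^\circ$ decompose disjointly as $\bigsqcup_j \Bbb{E}_j^*$ with $\Bbb{E}_j^* \subseteq P_{D_j}^\times \cap M_Y$. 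The map $k$ kills those $\nu \in \Xi^\times$ which are not primitive facet normals of $P^*$, yielding $k(D'_j) = \sum_{\nu \in \Bbb{E}_j^*} e_\nu = D_j^*$, so $\mathcal{V}' = \mathcal{V}^*$ and $E' = \mathrm{Tot}((\mathcal{V}^*)^\vee)$.

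$X' = E'$ and $W'$ comes from a section. For Theorem \ref{mirrorIsBundle}, the slice of $C_j^{lem}$ at height $\sigma_j = 1$ is $P_{D_j}$ (using $0 \in \mathrm{int}(P_{D_j})$, which holds because $Y$ is Fano and $D_j$ is effective), $(0,\sigma_j)$ lies in the interior, and a vertex of $P_{D_j}$ cannot be expressed as a convex combination involving points at other heights; so the finite non-trivial vertices of $C_j^{lem}$ are exactly $\{(\nu,\sigma_j) : \nu \in \mathrm{vert}(P_{D_j})\setminus\{0\}\}$. These project under $\pi_j$ to $\Bbb{E}_j^* \subseteq \mathrm{vert}((P^*)^\circ) = \mathrm{vert}(\mathrm{conv}(\bigcup_k \pi_k(V_k)))$ by the nef sub-partition condition, giving $X' = E'$. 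For $W'$, the terms are the rows of $A = \mathrm{div}_X$ from Corollary \ref{bigA-formula}: for $\rho \in R_Y$ the $\phi_j$-component equals $D_j(\rho) = \mathbf{1}[\rho \in \Bbb{E}_j]$, and for $X_k$ the row is $\phi_k$ alone. Summing over $j$ yields $1$ in every row (each $\rho \in R_Y$ lies in exactly one $\Bbb{E}_j$, by nef partition), so every term of $W'$ is linear in a single fiber coordinate $\sigma'_j = \phi_j$; by Lemma \ref{mirrorSuperSection}, $W'$ comes from a global section of $\mathcal{V}^*$.

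The main obstacle is the vertex-projection check for $X' = E'$: one must correctly handle the unbounded set $C_j^{lem}$ (with its recession cone and vertex at infinity in the $\sigma_j$-direction) and show that the finite non-trivial vertices project precisely to $\Bbb{E}_j^*$, both of which rest essentially on the Fano and nef partition hypotheses. A secondary point is checking kopasetic-ness of $(d', \Im(K'))$, needed to define $Y'$ in the first place; this follows from $0 \in \mathrm{int}(P^*)$ (by reflexivity of $P^*$) and the fact that every primitive facet normal of $Y^*$ appears among the rows of $d'$.
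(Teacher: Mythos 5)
Your proposal follows essentially the same route as the paper: compute that $\mathrm{w}_{BB}$ forces $\Im(K')=(1,\dots,1)$, identify $Y'$ with $Y^*$ by polar duality, verify the hypothesis of Theorem \ref{mirrorIsBundle} via the nef sub-partition condition, and invoke Lemma \ref{mirrorSuperSection} for the superpotential. Two remarks. First, the one substantive step you assert without proof is $\mathrm{conv}(\bigcup_j P_{D_j}) = (P^*)^\circ$, which is exactly the identification $P_{D_j}=\nabla_j$; the paper treats this as requiring an argument (writing $\nu=\sum_e c_e e$ over $\mathrm{vert}(P^\circ)$ and evaluating $\phi_j$), and it is the bridge between the section polytopes and Borisov's $\nabla_j$. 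It is a standard toric fact, so this is an omission rather than an error, but it should be supplied. Relatedly, you identify $k_{Y'}(D'_j)=D^*_j$ by tracking which rows of $D'$ survive $k$, whereas the paper computes the section polytope $P_{D'_j}=\Delta_j$ directly; your route requires knowing that the surviving rows in the $j$-th block are precisely $\Bbb{E}^*_j$ and not merely some facet normals of $P^*$ that happen to lie in $P^\times_{D_j}\cap M_Y$, which is slightly more delicate than the paper's computation. Second, your claim that the finite nontrivial vertices of $C_j^{lem}$ project \emph{onto} $\Bbb{E}^*_j$ is an overstatement: since $\Bbb{E}^*_j=\mathrm{vert}(\nabla_j^\times)$ with $\nabla_j^\times=\mathrm{conv}(\nabla_j\setminus\{0\})$, deleting $0$ can expose vertices of $\nabla_j^\times$ that are not vertices of $P_{D_j}$, so in general one only gets $\mathrm{vert}(P_{D_j})\setminus\{0\}\subseteq\Bbb{E}^*_j$. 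This does not damage the argument, because Theorem \ref{mirrorIsBundle} only needs the inclusion into $\mathrm{vert}(\mathrm{conv}(\{0\}\cup\bigcup_k\pi_k(V_k)))=\mathrm{vert}((P^*)^\circ)$, which the nef sub-partition condition supplies; your careful analysis of the vertex structure of the unbounded set $C_j^{lem}$ (heights, recession ray, vertex at infinity) is in fact more precise than the paper's own phrasing at that point. The superpotential argument and the kopasetic check are fine.
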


\begin{proof}
The choice of $W_{BB}$ means that on the dual $K' = (\alpha',0)$,
with $\alpha'_{(\nu,\sigma_j)} = 1$ for all $\nu \in P^\times_{D_j} \cap M_Y$.
For this value $(d', a')$ is easily seen to be kopasetic.  Now the
set $V_j^\times$ in definition \ref{definition:V_j} is $(P_{D_j}^\times \cap M_Y) \times \{ \sigma_j \}$.
The projection is $(P_{D_j}^\times \cap M_Y) \subset (M_Y)_\Bbb{R}$.
The nef sub-partition condition and theorem \ref{mirrorIsBundle} 
guarantee that $X' = E' = \mathrm{Tot}((\mathcal{V}')^\vee)$ over $Y'$.

If we denote the polytope defining $Y$ by $P$, it remains to show that $Y'$ 
is defined by $P^*$, and $D'_j = D^*_j$. The first fact follows from 
checking that $P_{D'_j} = \Delta_j$, where $\Delta_j = \mathrm{conv}(\{0\} \cup \Bbb{E}_j)$.  
Since $P' = P_{-k_{Y'}} = \sum_j P_{D'_j} $ from the standard theory of toric 
varieties, and $P^* = \sum_j \Delta_j$ by \cite[Prop 3.2]{dualNefBorisov}.
On the other hand, the second fact follows from $P_{D_j} = \nabla_j$
which implies $\mathrm{vert}(\nabla_j^\times) = \mathrm{vert}(C_j)$
and so $\mathbb{E}^*_j$ is made up of exactly the vertices of $(P')^\circ$
defining $D'_j$.

$P_{D_j} = \nabla_j$: $\mu \in \nabla_j \iff \nu(\mu) + \phi_j(\nu) \geq 0 , \ \forall \nu \in (N_Y)_\Bbb{R}$.
Since $Y$ is complete, we can find $\lambda \geq 0$ such that
$\nu = \sum_{e \in \mathrm{vert}(P^\circ)} c_e e$. Plugging this in above gives
$\sum_e c_e (e(\mu) +  \phi_j(e)) \geq 0$.  
Evaluating $\phi_j$ gives
$\sum_{e \not \in \Bbb{E}_j} c_e e(\mu) + \sum_{ e \in \Bbb{E}_j} c_e (e(\mu) +  1) \geq 0$.
This equation must hold for all choices $c_e \geq 0$, so we find 
$e(\mu) \geq 0$ for all $e \in \mathrm{vert}(P^\circ) \setminus \Bbb{E}_j$, and
 $(e(\mu) +  1) \geq 0$ for all $e \in \Bbb{E}_j$.  This is exactly the condition
that $\mu \in P_{D_j}$.

$P_{D'_j} = \Delta_j$: 
Using the equality of $P_{D_j}$ and $\nabla_j$ above
and the fact we only need to check non-zero vertices,
we can write $P_{D'_j}$ to be the $\nu$ satisfying 
$\nu(e^*) + 1 \geq 0$ for all 
$e^* \in \Bbb{E}^*_j$
and $\nu(e^*) \geq 0$ for all $e^* \in \Bbb{E}^*_i$ when $i \neq j$. 
$\Delta_j$ can be defined by 
$\{ \nu \in N_\Bbb{R} \ | \nu(\mu) \geq - \phi^*_j(\mu), \ \forall \mu \in M_\Bbb{R} \}$,
where $\phi^*_j(\mu) := -\inf_{\nu \in \nabla_j}(\nu(\mu))$ \cite[Cor 2.12 ]{dualNefBorisov}.
Now if we unravel this definition as we did in the previous paragraph,  we
arrive at the same conditions defining $P_{D'_j}$.

Finally, we check that the superpotential comes from a section by simply 
applying lemma \ref{mirrorSuperSection}.
\end{proof}

\section{An example: elliptic curves in $(\Bbb{P}^1)^2$}
\label{examples}

We consider  elliptic curves on $(\Bbb{P}^1)^2$.  This is a nice example since
many of the features of our duality  are exhibited and the dimension is low enough so that
we can actually draw the polytopes involved.  Also, since they are Calabi--Yau,
duality can be compared to the Batyrev--Borisov, Givental, and Hori--Vafa constructions.

Let $Z$ be an effective divisor with $Z \sim  D$, where $D$ is a $T$-invariant $(2,2)$ divisor on $Y = (\mathbb{P}^1)^2$.
$Z$ is an elliptic curve. This can be seen
using the adjunction formula, $\kappa_Z = (\kappa_Y + Z)|_Z$,
and the fact $\kappa_Y = (-2,-2)$.

In order to write down the Landau-Ginzburg model corresponding to $Z$ as in section \ref{correspondence},
we will use the inclusion $(\Bbb{C}^\times)^2_{x,y} \hookrightarrow Y$
defined by the point $([1:1],[1:1]) \in Y$ and the action
$(x,y) \cdot ([a:b],[c:d]) = ([a:xb],[c:yd])$.  This gives
the character group
\begin{equation}M_Y = \Bbb{Z} x \oplus \Bbb{Z} y ,\end{equation}
the group of $T$-invariant divisors
\begin{equation}\Bbb{Z}^{R_Y} = \Bbb{Z} \rho^x_0 \oplus \Bbb{Z} \rho^y_0 \oplus \Bbb{Z} \rho^x_\infty \oplus \Bbb{Z} \rho^y_\infty,  \end{equation}
the $1^{\mathrm{st}}$ Chow group
\begin{equation}A_{1}(Y) = \Bbb{Z} [\rho^x_0] \oplus \Bbb{Z} [\rho^y_0], \end{equation}
the character-to-divisor map
\begin{equation}
\mathrm{div}_Y
=
\left[
\begin{array}{cc}
1&0 \\
0&1 \\
-1&0 \\
0&-1
\end{array}
\right] ,
\end{equation}
and the cokernel of $\mathrm{div}_Y$,
\begin{equation}
[-]_Y =
\left[
\begin{array}{cccc}
1&0&1&0 \\
0&1&0&1
\end{array}
\right].
\end{equation}

Now for $E=\mathrm{Tot}(\mathcal{O}_Y(-2,-2))$ we use the formulas provided in
section \ref{AandB}.  First the  (character-to-divisor) $A$-side:

\begin{equation}
M = M_Y \oplus \Bbb{Z} \xi ,
\end{equation}
and
\begin{equation}\Bbb{Z}^{R} = \Bbb{Z}^{R_Y} \oplus \Bbb{Z} D .\end{equation}
Recall
\begin{equation}A_2(E) = A_{1}(Y) = \mathrm{H}^2(Y; \Bbb{Z}), \end{equation}
and we choose
\begin{equation} D = \left[ \begin{array}{c} 1 \\ 1 \\ 1 \\ 1 \end{array} \right] . \end{equation}
Note that this is the canonical choice for $-\kappa_Y$.
A quick check shows
\begin{equation} [D] = \left[ \begin{array}{c} 2 \\ 2 \end{array} \right]\end{equation}
as needed.

Putting this together gives
\begin{equation}
\label{ellipticDivE}
\mathrm{div}_E
=
\left[
\begin{array}{cc}
d_Y & D \\
0   & 1
\end{array}
\right]
=
\left[
\begin{array}{ccc}
\left[
\begin{array}{cc}
1&0 \\
0&1 \\
-1&0 \\
0&-1
\end{array}
\right] &  | & \left[ \begin{array}{c} 1 \\ 1 \\ 1 \\ 1  \end{array} \right] \\
\begin{array}{cc} 0 & 0 \end{array} &| & 1
\end{array}
\right] .
\end{equation}
Since $A_2(E)$ is torsion free the cokernel of $\mathrm{div}_E$, $[-]$,
is given by the  matrix
\begin{equation}
\label{equation:coker_div_E}
[-] =
\left[
\begin{array}{ccc}
[-]_Y & | & [-D]
\end{array}
\right]
=
\left[
\begin{array}{cc}
\left[
\begin{array}{cccc}
1&0&1&0 \\
0&1&0&1
\end{array}
\right]
&  \left[ \begin{array}{c} -2 \\ -2 \end{array} \right]
\end{array}
\right] .
\end{equation}
At this point we will not commit ourselves to a choice of $K$.

Now the (superpotential) B-side:
$N_Y = \Bbb{Z} \nu_1 \oplus \Bbb{Z} \nu_2$ with the obvious pairing.
So $N_E = N_Y \oplus \Bbb{Z} \phi$.  $P_{D}$ has integral points equal to the
characters with $\mathrm{div}_Y(\mu) + D \geq 0$.  This means $-1 \leq x,y \leq 1 $.
Thus
\begin{equation}\Xi =\{
\left[ \begin{array}{c} 0 \\ 1  \end{array} \right] ,
\left[ \begin{array}{c} 1 \\ 1  \end{array} \right] ,
\left[ \begin{array}{c} 1 \\ 0  \end{array} \right] ,
\left[ \begin{array}{c} 1 \\ -1  \end{array} \right] ,
\left[ \begin{array}{c} 0 \\ -1  \end{array} \right] ,
\left[ \begin{array}{c} -1 \\ -1   \end{array} \right] ,
\left[ \begin{array}{c} -1 \\ 0  \end{array} \right] ,
\left[ \begin{array}{c} -1 \\ 1  \end{array} \right] ,
\left[ \begin{array}{c} 0 \\ 0  \end{array} \right]
\} .\end{equation}
We can rewrite these vectors  as functions on $E$:
\begin{equation}\Xi =\{
y\xi ,
xy\xi ,
x\xi,
x\xi / y,
\xi / y,
\xi / xy,
\xi/x,
y\xi/x,
\xi
\} . \end{equation}
Now we can obtain $\mathrm{mon}$ by transposing the elements of $\Xi$
as mentioned after equation (\ref{Lequation}):
\begin{equation}
\mathrm{mon} =
\left[
\begin{array}{rrr}
0 & 1 & 1 \\
1 & 1 & 1 \\
1 & 0 & 1 \\
1 & -1 & 1 \\
0 & -1 & 1 \\
-1 & -1 & 1 \\
- 1 & 0 & 1 \\
-1 & 1 & 1 \\
0 & 0 & 1
\end{array}
\right] .
\end{equation}

Applying duality we have  $A = B' = \mathrm{div}_E$ and $B = A'= \mathrm{mon}$.

\subsection*{$K'$ = ``Givental choice''}
At this point we make a choice for $K' = L$ so that we can investigate the geometry of $X(A',a')$.
First choose $K' = 0$.  This leads to the Givental mirror and we know it is described by 
reading the rows off the cokernel of $A = \mathrm{div}_{X=E}$ which appears in equation 
(\ref{equation:coker_div_E}):
\begin{equation}
\{ (x_1, x_2, x_3, x_4, y_1) \in \Bbb{C}^5 \ | \ x_1 x_3 = Q_1 y_1^2, \mathrm{and} \  x_2 x_4 = Q_2 y_1^2   \},
\end{equation}
with the superpotential 
\begin{equation}
W' = x_1 + x_2 + x_3 + x_4 + y_1 \ .
\end{equation}
Indicating coordinates dual to $x, y$ and $\xi$ with primes, the identification of the Givental mirror with $X(A', 0)$ sets
\begin{equation}
x_1 =  x' \xi', \ \ x_2 =  y' \xi', \ \ x_3 = Q_1 \xi'/x', \ \ x_4 = Q_2 \xi'/y', \ \ \   \mathrm{and} \ \ \   y_1 = \xi' \ .
\end{equation}
The superpotential becomes 
\begin{equation}
W' = (x' + y' + Q_1 / x' + Q_2 / y'  + 1) \xi'
\end{equation}
on $X(A', 0) = \mathrm{Spec}(\Bbb{C}[x' \xi', y' \xi', \xi'/x', \xi'/y', \xi' ])$.

\subsection*{$K'$ = ``Batyrev-Borisov choice''} Now observe what happens for the ``Batyrev--Borisov'' choice $a' = (-\kappa',0)$.
In terms of the basis for $\Bbb{Z}^{\Xi}$,
\begin{equation}
L_{BB} = ia' = i[
y\xi +
xy\xi +
x\xi +
x\xi / y +
\xi / y +
\xi / xy +
\xi/x +
y\xi/x
]
.
\end{equation}
So $(\alpha',0) = (y\xi +
xy\xi +
x\xi +
x\xi / y +
\xi / y +
\xi / xy +
\xi/x +
y\xi/x, 0)$ gives a representative for $a'$ with $\alpha' = -\kappa  \in \Bbb{Z}^{\Xi^\times}$as desired.

The polytope of $X(A',a')$ is the set of solutions
$\left[ \begin{array}{c}n_1 \\ n_2 \\ p \end{array} \right] \in N_E = \Bbb{Z} \nu_1 \oplus \Bbb{Z} \nu_2 \oplus \Bbb{Z} \phi$ to
\begin{equation}
\left[ 
\begin{array}{c} 
n_1 \\ 
n_2 \\ 
p 
\end{array} 
\right] 
+ 
\left[ 
\begin{array}{c} 
\alpha' \\ 
0 
\end{array} 
\right]
=
\left[
\begin{array}{c}
0 \\
1 \\
1 \\
1 \\
0 \\
-1 \\
-1 \\
-1 \\
0
\end{array}
\right] 
n_1 +
\left[
\begin{array}{c}
 1 \\
 1 \\
 0 \\
 -1 \\
 -1 \\
 -1 \\
 0 \\
 1 \\
 0
\end{array}
\right] 
n_1 +
\left[
\begin{array}{c}
1 \\
1 \\
1 \\
1 \\
1 \\
1 \\
1 \\
1 \\
1
\end{array}
\right] 
p +
\left[
\begin{array}{c}
1 \\
1 \\
1 \\
1 \\
1 \\
1 \\
1 \\
1 \\
0
\end{array}
\right] 
\geq 0 .
\end{equation}

A careful check shows that inequalities corresponding to rows 1, 3, 5, and 7 above
are unnecessary.
Removing them yields the system of inequalities
\begin{equation}
\left[
\begin{array}{c}
1 \\
1 \\
-1 \\
-1 \\
0
\end{array}
\right] n_1 +
\left[
\begin{array}{c}
 1 \\
 -1 \\
 -1 \\
 1 \\
0
\end{array}
\right] n_1 +
\left[
\begin{array}{ccc}
1 \\
1 \\
1 \\
1 \\
1
\end{array}
\right] p +
\left[
\begin{array}{ccc}
1 \\
1 \\
1 \\
1 \\
0
\end{array}
\right] \geq 0 .
\end{equation}
It follows
\begin{equation}
\mathrm{div}_{X(A',a')} =
\left[
\begin{array}{rrr}
1 &1 &1 \\
1 &-1 &1 \\
-1 &-1 &1 \\
-1 &1 &1 \\
0 &0 &1
\end{array}
\right] ,
\end{equation}
and
$k(\alpha') =\left[
\begin{array}{c}
1\\
1\\
1\\
1\\
0
\end{array}
\right]
$ as in (as in  equation (\ref{Gmap})).

Comparing $\mathrm{div}_{X(A',a')}$ and $k(\alpha')$ to the
formula in equation (\ref{preAformula}), or more generally equation
(\ref{bigA-formula}), we see that $X(A', a')$ is the total space, $E'$, of
the canonical bundle over a variety $Y'$ with
\begin{equation}
\mathrm{div}_{Y'} =
\left[
\begin{array}{rr}
1 &1  \\
1 &-1  \\
-1 &-1  \\
-1 &1
\end{array}
\right] .
\end{equation}
Furthermore, $-\kappa_{Y'}$ pulls back to $k(\alpha')$.

After looking at the equation $\mathrm{div}_{Y'} \left[ \begin{array}{c} n_1 \\ n_2 \end{array} \right] + \kappa_{Y'} \geq 0$,
one finds $Y'$ is determined by the reflexive polytope $P_{-\kappa_{Y'}}$ that is given by
$\pm n_1 \pm n_2 + 1 \geq 0 \subseteq N_Y = \Bbb{Z}\nu_1 \oplus \Bbb{Z}\nu_2$:

\begin{figure}[ht]
\begin{center}
\quad \quad \ \
\includegraphics[scale=.75]{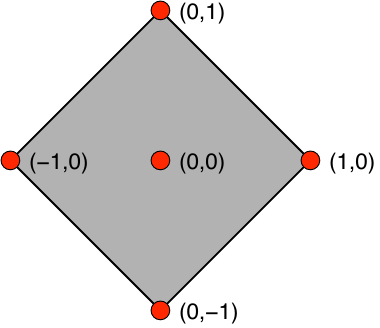}
\end{center}
\caption{Polytope for $Y'$ under the anticanonical choice for  $K'$.}
\label{fig:diamondPolytope}
\end{figure}

Note that this is dual to the polytope of $P_{-\kappa_Y}$ ( $P_{-\kappa_Y}$ is a square whose  sides have length = 2
and is  
centered at the origin of $M_{\Bbb{R}}$).

This is predicted by theorem \ref{BBequalsFTLG}.
To check that a function $W'$ on $X(A',a')$ defined by any choice of $L'$ comes
from a section of $\mathcal{O}_{Y'}(-\kappa_{Y'})$, one only needs to look at $\mathrm{div}_E$
in equation (\ref{ellipticDivE}) and notice that each row is the transpose of an element of
$P_{-\kappa_{Y'}} \times \{ \sigma \} \subset N_E$.  
Note the toric variety associated to this polytope is not smooth.

Also note that one can easily check that the $X(A',K'_{BB})$ can be obtained from Givental's mirror
by blowing up the scheme defined by $\xi' =0$.

\subsection*{$K'$ ``very ample''}
If we make a different choice for $K'$, for instance
\begin{equation}K'
= i[\left[ \begin{array}{c}
2\\
3\\
2\\
3\\
2\\
3\\
2\\
3\\
0
\end{array} \right] ] \end{equation}
A similar check shows that with this choice
$A'=\mathrm{div}_{X(A',a')}$,
and $X(A',a')$ is the total space of
the canonical bundle of the toric variety associated with the  stop-sign polytope:

\begin{figure}[ht]
\begin{center}
\ \quad \quad \quad
\includegraphics[scale=0.75]{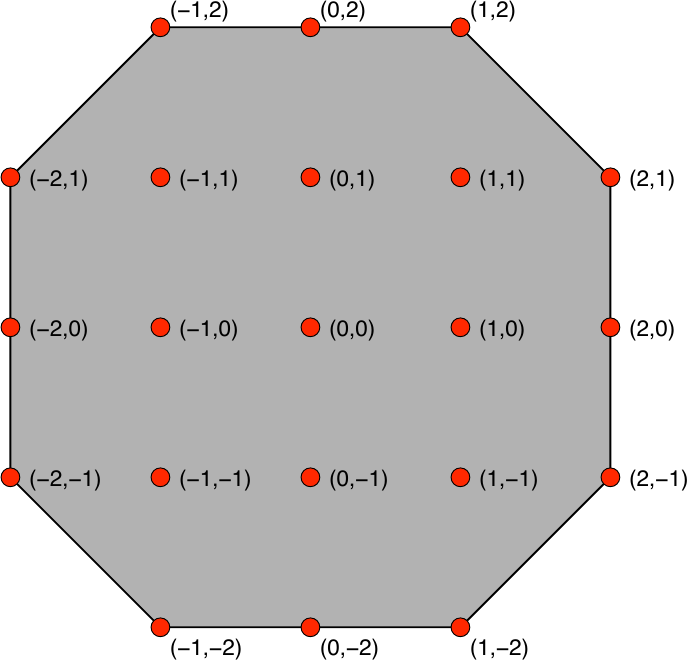}
\end{center}
\caption{Polytope for $Y'$ under a very ample choice for $K'$.}
\label{fig:octogonPolytope}
\end{figure}

This toric variety is smooth, and is in fact a crepant resolution
of the variety given by the diamond polytope. However, although
$W'$ comes from a global section of the anticanonical bundle,
it is not generic.

It is helpful to think of all three of these  varieties as both living in the
family of K\"ahler deformations
\begin{equation}
K'_{(s,t)}  = i t( s [\left[ \begin{array}{c}
2\\
3\\
2\\
3\\
2\\
3\\
2\\
3\\
0
\end{array} \right] ] +
i(1-s)[\left[ \begin{array}{c}
1\\
1\\
1\\
1\\
1\\
1\\
1\\
1\\
0
\end{array} \right] ])
.
\end{equation}

\section{Discussion}

The duality introduced here provides a simple 
framework under which all existing mirror candidate 
constructions 
for complete intersections fall.  It is also 
holds some promise for use in homological 
mirror symmetry since it often provides partial
resolutions when the mirror according to other
constructions is singular.

The main short coming in the construction is the apparent
asymmetry that sometimes happens under double dualization.
This was mentioned in remark \ref{remark:double_dual}.
It was pointed out to us by Lev Borisov that a possible
fix would be to include in the dualization procedure
a pair of (formal) automorphisms of  
$F_A \colon 
\mathrm{coker}(A)_{\Bbb{C}/\Bbb{Z}}
\rightarrow 
\mathrm{coker}(B')_{\Bbb{C}^\times}$, 
and
$F_B \colon 
\mathrm{coker}(B)_{\Bbb{C}^\times}
\rightarrow 
\mathrm{coker}(A')_{\Bbb{C}/\Bbb{Z}}$.

In the construction as defined here we are simply using the maps
$F_A = \exp(2\pi i - )$ and $F_B = \frac{1}{2\pi i}\log(-)$.
These would take $K$ to $\exp(2 \pi i L')$ and $\exp(2 \pi i L)$
to $K'$.  We could replace these with arbitrary maps, so long
as $F_{B'} \circ F_A = \mathrm{Id}$, and $F_B \circ F_{A'} = \mathrm{Id}$,
and all the results in the paper would still hold.

The ``correct'' maps $F_A$ and $F_B$ should be closely 
related to the mirror map relating the moduli space 
of complex structures of a sigma model $Z$, and the
moduli space of complexified K\"ahler structures 
of the mirror sigma model $Z'$.  In the case when
both our Landau-Ginzburg models correspond to sigma 
models, then $\mathrm{coker}(B)_{\Bbb{C}/\Bbb{Z}}$
should map to the moduli $\mathcal{M}_K$ of complexified K\"ahler 
structures of $Z$ and $\mathrm{coker}(A')_{\Bbb{C}^\times}$
to the moduli, $\mathcal{M}_{L'}$ of complex structures of $Z'$.
This should lead to a commutative diagram
\begin{equation}
\begin{array}{ccc}
\mathrm{coker}(B)_{\Bbb{C}/\Bbb{Z}} & \stackrel{F_B}{\rightarrow} & \mathrm{coker}(A')_{\Bbb{C}^\times} \\
\downarrow & & \downarrow \\
\mathcal{M}_K & \stackrel{\mathrm{mirror}}{\rightarrow} & \mathcal{M}_{L'}
\end{array} .
\end{equation}

Nailing down the exact maps $F_A$ and $F_B$ in general requires more than what is known about
the mirror map since we hope that homological mirror symmetry could be made to work for 
more than just Calabi-Yau manifolds.  It is the subject of further research to understand
mirror symmetry and the mirror map in this more general setting.

\newpage

\bibliographystyle{halpha}
\bibliography{new_long_duality}

\end{document}